\documentclass[12pt, oneside, psamsfonts]{amsart}

%-------------pdflatex or latex--------------------------------------
\newif\ifPDF
\ifx\pdfoutput\undefined\PDFfalse
\else \ifnum \pdfoutput > 0 \PDFtrue
        \else \PDFfalse
        \fi
\fi

%-------------packages---------------------------------------------
%\usepackage{showkeys}
\usepackage[centertags]{amsmath}
\usepackage{amsfonts}
\usepackage{mathrsfs}
\usepackage{textcomp}
\usepackage{amssymb}
\usepackage{amsthm}
\usepackage{newlfont}
\usepackage[all]{xy}

%------------packages for pdflatex------------------------------

\ifPDF
  \usepackage[pdftex]{color, graphicx}
  \usepackage[pdftex, bookmarks, colorlinks]{hyperref}
  \hypersetup{colorlinks=false}

%------------\packages for latex---------------------------------

\else
  \usepackage{color}
  \usepackage[dvips]{graphicx}
  \usepackage[dvips]{hyperref}
\fi

%-------------------------------------PAGE LAYOUT---------------------------------------------------

\usepackage[scale=0.8]{geometry}

%------------------------------------ the space between lines -----------------------------------

%------------------------------------ THEOREMS -------------------------------------------------------
\newtheorem{thm}{Theorem}[section]
\newtheorem{cor}[thm]{Corollary}
\newtheorem{lem}[thm]{Lemma}

\theoremstyle{definition}
\newtheorem{defn}[thm]{Definition}
\theoremstyle{remark}
\newtheorem{rem}[thm]{Remark}

\numberwithin{equation}{section}

% -------------------------------------MATH -----------------------------------------------------------
\newcommand{\norm}[1]{\Vert#1 \Vert}
\newcommand{\abs}[1]{| #1 |}

\newcommand{\Real}{\mathbb R}
\newcommand{\Int}{\mathbb Z}
\newcommand{\Comp}{\mathbb C}
\newcommand{\Ratn}{\mathbb Q}
\newcommand{\eps}{\varepsilon}

\newcommand{\Kzero}{\mathrm{K}_0}
\newcommand{\Kone}{\mathrm{K}_1}

%------------------------------------------------------title------------------------------------------------

\begin{document}

\title{Remarks on Villadsen algebras}

\author{George A. Elliott}
\address{Department of Mathematics, University of Toronto, Toronto, Ontario, Canada~\ M5S 2E4}
\email{elliott@math.toronto.edu}

\author{Chun Guang Li}
\address{College of Mathematics and Statistics, Northeast Normal University, Changchun, Jilin, China, 130024}
\email{licg864@nenu.edu.cn}

\author{Zhuang Niu}
\address{Department of Mathematics and Statistics, University of Wyoming, Laramie, Wyoming 82071, USA}
\email{zniu@uwyo.edu}

%\thanks{The research of the first named author is supported by a Natural Sciences and Engineering Research Council of Canada (NSERC) Discovery Grant, the research of the second named author is supported by an NNSF grant of China (No. 11401088), and the research of the third named  author is supported by a NSF grant (DMS-1800882). }

%\keywords{}
%\date{\today}
%\dedicatory{}
%\commby{}

%------------------------------------------abstract--------------------------------------

\begin{abstract}
It is shown that certain unital simple C*-algebras constructed by Villadsen in \cite{Vill-perf} are classified by the $\Kzero$-group together with the radius of comparison.
\end{abstract}

\maketitle

\section{Introduction}
Villadsen algebras (of the first type) were constructed in \cite{Vill-perf} as examples of simple unital C*-algebras which have perforation in their ordered $\Kzero$-group. This class of C*-algebras lies outside the scope of the current classification theorem (\cite{GLN-TAS-1}, \cite{GLN-TAS-2}, \cite{EN-K0-Z}, \cite{EGLN-DR}, \cite{EGLN-ASH}, \cite{TWW-QD}, \cite{CETWW-dim-n}), as Villadsen algebras do not absorb the Jiang-Su algebra $\mathcal Z$ tensorially. Indeed, a Villadsen type algebra was constructed in \cite{Toms-Ann} which has the same value of the Elliott invariant as an AI algebra, but is not itself isomorphic to that AI algebra.

Each Villadsen algebra is an inductive limit of homogeneous C*-algebras with connecting maps induced by coordinate projections together with a small portion of point evaluations (see Section \ref{Vill-construction}). In this note, we shall first show that, with different point-evaluation sets, the resulting algebras are classified by the $\Kzero$-multiplicity and the radius of comparison of Toms (\cite{RC-Toms}; Definition \ref{RC} below):
\begin{thm}[Theorem \ref{thm-p}]
Let $X$ be a connected finite-dimensional solid space (see Definition \ref{solid-sp}). Let $A_E$ and $A_F$ be  Villadsen algebras (see Section \ref{Vill-construction}) with point-evaluation sets $E$ and $F$ respectively (but with the same connected space $X$ and the same numbers and multiplicities of the coordinate projections $(c_i)$, $(s_{i_1}, ..., s_{i, c_i})$). 
Then $A_E \cong A_F$ if, and only if, $$ \rho(\Kzero(A_E)) = \rho(\Kzero(A_F)) \quad \textrm{and}\quad \mathrm{rc}(A_E) = \mathrm{rc}(A_F),$$
where $\rho_{A_E}$ and $\rho_{A_F}$ are the unique states of the order-unit groups $\Kzero(A_E)$ and $\Kzero(A_F)$, respectively, and $\mathrm{rc}(\cdot)$ denotes the radius of comparison.
\end{thm}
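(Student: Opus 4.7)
The direct implication is immediate, since both invariants are preserved by C*-isomorphism. For the nontrivial direction I would set up an Elliott-style approximate intertwining between the two inductive systems $A_E = \lim A_i^E$ and $A_F = \lim A_i^F$. Because the coordinate-projection data $(c_i)$ and $(s_{i,j})$ are the same on both sides, the building blocks at each stage are both of the form $\MC{n_i}{X^{m_i}}$, with $n_i$ and $m_i$ determined, once the number of point-evaluations at each stage has been fixed, by this common combinatorial data. The connecting maps differ only in which points are plugged into the point-evaluation summands, and the task is therefore to absorb that difference into an isomorphism.

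The first step is to extract the numerical content of the invariants. Villadsen algebras of this type have a unique tracial state, so the state $\rho$ on $\Kzero$ is unique; the hypothesis $\rho(\Kzero(A_E))=\rho(\Kzero(A_F))$ as subgroups of $\Real$ should pin down the asymptotic ratio of point-evaluations to coordinate-projection multiplicity, while the common value of $\mathrm{rc}$ controls how fast $\dim(X^{m_i})$ grows compared with $n_i$. Together, after passing to a common telescoped subsequence, these conditions will be used to arrange $|E_i|=|F_i|$ at every stage, so that $A_i^E$ and $A_i^F$ are literally the same matrix algebra over $C(X^{m_i})$.

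The second step is to construct, at each stage, an isomorphism $\Phi_i:A_i^E\to A_i^F$ that approximately intertwines the connecting maps. Since $X$ is a connected finite-dimensional solid space, the homeomorphism group of $X^{m_i}$ acts transitively on equinumerous finite subsets, so a homeomorphism $h_i$ of $X^{m_i}$ sending $E_i$ to $F_i$ exists; pulling back along $h_i$ defines $\Phi_i$ on the building block. The delicate point is compatibility across stages: for the intertwining square to commute up to a small error one needs $h_{i+1}$ and $h_i$ to be compatible with the coordinate projections $X^{m_{i+1}}\to X^{m_i}$ that appear in the connecting maps. This is where I expect the radius of comparison to play its substantive role: it provides the slack allowing $h_{i+1}$ to be deformed into a suitable ``product'' of $h_i$'s outside shrinking neighborhoods of the new point-evaluation sites, with the resulting discrepancy absorbed by a small unitary conjugation inside $\MC{n_{i+1}}{X^{m_{i+1}}}$.

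The main obstacle, therefore, is this cross-stage compatibility of the homeomorphisms $h_i$, together with the quantitative arrangement that the errors form a summable sequence $(\eps_i)$. Once this is done, Elliott's approximate intertwining theorem yields $A_E\cong A_F$. I expect the proof to split cleanly into a K-theoretic bookkeeping part (matching the invariants to cardinalities, using solidity and connectedness of $X$ to rule out obstructions) and a geometric part (producing the compatible homeomorphisms using the radius-of-comparison slack), with the latter being the real work.
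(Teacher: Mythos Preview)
Your overall framework---Elliott approximate intertwining---is correct, but the execution has two genuine gaps and diverges substantially from what the paper does.

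First, two factual errors. Villadsen algebras of this type do \emph{not} have a unique trace: when $\mathrm{rc}>0$ the trace simplex is the Poulsen simplex (Theorem~\ref{trace-VA}), so your reading of the hypothesis $\rho(\Kzero(A_E))=\rho(\Kzero(A_F))$ is off. And solidity of $X$ does not imply that the homeomorphism group of $X^{m_i}$ acts transitively on finite subsets of a given cardinality; solidity only says $X$ contains a Euclidean ball of full dimension, which gives no homogeneity (think of points on versus off the boundary of that ball, or of a non-homogeneous $X$ such as a wedge). So the homeomorphisms $h_i$ you want need not exist, and the whole second step collapses.

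Second, even granting such $h_i$, pulling back along a homeomorphism of the base would scramble the coordinate-projection part of the connecting maps, and the ``cross-stage compatibility'' problem you flag is exactly the obstruction that makes this route unworkable. The paper avoids this entirely: it never moves points by homeomorphisms. Instead it (i) builds cross-maps $\phi^{(E,F)}$, $\phi^{(F,E)}$ that are themselves of Villadsen type, sharing the \emph{same} coordinate-projection block as the original connecting maps but with freely chosen point evaluations (Lemma~\ref{trace-int}); and (ii) proves a uniqueness theorem (Theorem~\ref{unique}) saying that two such maps with identical coordinate-projection part and close traces are approximately unitarily equivalent---the unitary is a permutation matrix coming from Hall's Marriage Lemma, matching the two point-evaluation multisets up to $3\eta$. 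The role of $\mathrm{rc}$ is purely numerical: via Theorem~\ref{rcA} it forces $\lim_i \prod(n_j+k_j^{(E)})/\prod(n_j+k_j^{(F)})=1$, which together with the $\Kzero$ condition (same supernatural number) is what makes Lemma~\ref{trace-int} go through. Finally, you omit the case $\mathrm{rc}=0$, which the paper handles separately by $\mathcal Z$-stability and the existing classification theorem.
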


Moreover, if the fixed seed space $X$ is further assumed to be K-contractible (that is, $\Kzero(\mathrm{C}(X)) = \Int$ and $\Kone(\mathrm{C}(X)) = \{0\}$), then the algebras can be classified by the $\Kzero$-group together with the radius of comparison even if the numbers and the multiplicities of the coordinate projections and the numbers of point evaluations are arbitrary:
\begin{thm}[Corollary \ref{diff-prod}]
Let $X$ be a connected finite-dimensional solid space which is K-contractible. Let 
$$A:=A(X^{p}, (n^{(A)}_i), (k^{(A)}_i), E^{(A)}) \quad\textrm{and}\quad B:=B(X^q, (n^{(B)}_i), (k^{(B)}_i), F^{(B)})$$
be Villadsen algebras with non-zero radius of comparison, where $p, q=1, 2, ...$.
Then 
$$A \cong B$$ 
if, and only if, $$\Kzero(A) \cong \Kzero(B) \quad \textrm{and}\quad \mathrm{rc}(A) = \mathrm{rc}(B).$$
\end{thm}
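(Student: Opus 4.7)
The strategy is to reduce the corollary to Theorem \ref{thm-p} via a telescoping argument that exploits the K-contractibility of $X$ to absorb the discrepancies between the defining data of $A$ and $B$ into the point-evaluation sets.

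Since $X$ is K-contractible, the Künneth formula gives $\Kzero(\mathrm{C}(X^p)) = \Int$ and $\Kone(\mathrm{C}(X^p)) = \{0\}$ for every $p \geq 1$, so the ordered $\Kzero$-group with unit of each building block is just $(\Int, \Int_{\geq 0}, N_m)$, and $\Kzero(A)$ and $\Kzero(B)$ are rank-one dimension groups, i.e.\ subgroups of $\Ratn$ with distinguished order units. From the isomorphism $\Kzero(A) \cong \Kzero(B)$ as ordered groups with order unit, I would first pass to common telescoping subsequences of the two inductive systems so that the matrix sizes $N_m^{(A)} = N_m^{(B)} = N_m$ and the induced $\Kzero$-connecting multiplications agree stage by stage. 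This uses only that rank-one dimension groups with a specified order unit admit (up to subsequences) any desired directed realization with prescribed integer multiplications.

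After this matching, the only remaining mismatch is in the seed-space powers: the $m$-th building block of $A$ has the form $\mathrm{M}_{N_m}(\mathrm{C}(X^{D_m^{(A)}}))$ and that of $B$ is $\mathrm{M}_{N_m}(\mathrm{C}(X^{D_m^{(B)}}))$, with $D_m^{(\cdot)}$ determined by $p$ or $q$ and the cumulative coordinate-projection multiplicities. The standard Toms--Villadsen formula identifies $\mathrm{rc}$ of such an algebra as a positive multiple of $\lim_m D_m / N_m$, so $\mathrm{rc}(A) = \mathrm{rc}(B) > 0$ forces these ratios to tend to the same positive limit. I would then telescope further and relabel excess coordinate-projection factors on one side as extra point evaluations (which modifies the point-evaluation set but not the underlying block structure) to arrange $D_m^{(A)} = D_m^{(B)}$ identically. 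At this stage $A$ and $B$ are Villadsen algebras over a common seed space $X^{D}$ with matching numbers and multiplicities of coordinate projections, differing only in the point-evaluation sets; Theorem \ref{thm-p} applies, since $\rho(\Kzero)$ is transported by the K-theory isomorphism and the radii of comparison coincide by hypothesis, and we conclude $A \cong B$.

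The main obstacle is the second telescoping step: one must verify that the seed-power mismatch can be eliminated simultaneously with the $\Kzero$-matching from the first step, without destroying either alignment. This is where the hypothesis $\mathrm{rc} \neq 0$ is essential, since without it the ratios $D_m / N_m$ could tend to zero and nothing would prevent two Villadsen algebras of genuinely different geometric complexity from sharing $(\Kzero, \mathrm{rc})$; the quantitative Toms--Villadsen relationship between $\mathrm{rc}$ and $D_m/N_m$ is what makes the double alignment feasible in the non-degenerate regime treated by the corollary.
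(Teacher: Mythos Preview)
Your reduction targets the wrong theorem. Theorem \ref{thm-p} requires the two algebras to share not only the seed space but the \emph{same} coordinate-projection data $(c_i)$ and multiplicities $(s_{i,1},\dots,s_{i,c_i})$; only the point-evaluation sets may differ. In the corollary, $A$ and $B$ come with independent sequences $(n_i^{(A)}), (c_i^{(A)}), (s_{i,j}^{(A)})$ and $(n_i^{(B)}), (c_i^{(B)}), (s_{i,j}^{(B)})$, and nothing in your telescoping aligns these. Matching the matrix sizes $N_m$ via $\Kzero$ and the cumulative exponents $D_m$ via $\mathrm{rc}$ is far weaker than matching the stage-by-stage data that Theorem \ref{thm-p} demands.

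The step where you ``relabel excess coordinate-projection factors as extra point evaluations'' is the place this breaks. A coordinate projection $f\mapsto f\circ\pi_j$ is not a constant map, so it cannot be absorbed into a point-evaluation set; and the number $c_i$ of coordinate projections at stage $i$ fixes the exponent of the target building block $X^{c_1\cdots c_i}$, so changing it changes the building block itself, not merely the evaluation data. Thus there is no way to force $D_m^{(A)}=D_m^{(B)}$ while leaving both inductive systems intact and still landing in the hypotheses of Theorem \ref{thm-p}.

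The paper proceeds differently: it tensors each system by $\mathrm{M}_{pq}(\Comp)$ and prepends a single coordinate-projection map $\mathrm{M}_{qn_0^{(A)}}(\mathrm{C}(X))\to \mathrm{M}_{pq}(\mathrm{M}_{n_0^{(A)}}(\mathrm{C}(X^p)))$ (and similarly for $B$), so that $\mathrm{M}_{pq}(A)$ and $\mathrm{M}_{pq}(B)$ are both Villadsen algebras with seed space $X$. One then invokes Theorem \ref{n-theorm}, which \emph{does} allow different $(n_i)$, $(c_i)$, $(s_{i,j})$, to get $\mathrm{M}_{pq}(A)\cong\mathrm{M}_{pq}(B)$, and finishes with stable rank one (hence cancellation) to deduce $A\cong B$ from $[1_A]_0=[1_B]_0$.
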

Note that this theorem covers the example constructed in \cite{Toms-Ann}.

One might compare the Villadsen algebras with the UHF algebras of  \cite{Glimm-UHF} and \cite{Dix}, and the present classification results with the classification of the unital UHF algebras, or, for that matter, of their non-unital hereditary subalgebras in \cite{Dix}. The non-unital version of the Villadsen algebras and their classification will  be considered in a forthcoming paper.

We hope that our result might shed some light on the possibility of classifying more general non-$\mathcal Z$-stable C*-algebras, for instance, general simple A(S)H algebras with diagonal maps (\cite{EHT-sr1} and \cite{Lutley-Alboiu}), or general simple  transformation group C*-algebras.

\subsection*{Acknowledgements} The research of the first named author was supported by a Natural Sciences
and Engineering Research Council of Canada (NSERC) Discovery Grant, the research of the second named author was supported by a National Natural Science Foundation of China (NNSF) grant (No.~11401088), and the research of the third named author was supported by a U.S.~National Science Foundation grant (DMS-1800882). The third named author also thanks Ali Asadi-Vasfi, Xuanlong Fu, and Cristian Ivanescu for discussions.

\section{The Villadsen algebra $A(X, (n_i), (k_i), E)$}\label{Vill-construction}
Let $X$ be a connected metrizable compact space, let $(c_i)$ and $(k_i)$ be two sequences of non-zero natural numbers, and let
$$
\left\{ 
\begin{array}{l}
E_1:=\{x_{1, 1}, ..., x_{1, k_1}\} \subseteq X,\\
E_2:=\{x_{2, 1}, ..., x_{2, k_2}\} \subseteq X^{c_1}, \\
 ... \\
 E_i: = \{x_{i, 1}, ..., x_{i, k_i}\} \subseteq X^{c_1 \cdots c_{i-1}}, \\
 ...
 \end{array}
\right.
$$
be a sequence of finite subsets such that for each $i=1, 2, ...$, the set
$$ \bigcup_{j=1}^\infty \bigcup_{s=1}^{c_{i+1} \cdots c_{i+j-1}}\pi_s (E_{i+j})$$
is dense in $X^{c_1\cdots c_i}$, where $\pi_s$ are the coordinate projections.

Construct the (generalized) Villadsen algebra as the inductive limit of the sequence 
\begin{equation}
\xymatrix{
\mathrm{M}_{n_0}(\mathrm{C}(X)) \ar[r] & \mathrm{M}_{n_0(n_1+k_1)}(\mathrm{C}(X^{c_1})) \ar[r] & \mathrm{M}_{n_0(n_1+k_1)(n_2+k_2)}(\mathrm{C}(X^{c_1c_2})) \ar[r] & \cdots ,
}
\end{equation}
where the seed for the $i$th-stage map,
$$\phi_i: \mathrm{C}(X^{c_1 \cdots c_{i-1}}) \to \mathrm{M}_{n_i+k_i}(\mathrm{C}(X^{c_1 \cdots c_{i-1}c_{i}})),$$ is defined by
\begin{eqnarray*} 
f & \mapsto & \mathrm{diag}\{ \underbrace{\underbrace{f\circ \pi_1, ..., f\circ \pi_1}_{s_{i, 1}}, ... ,  \underbrace{f\circ \pi_{c_i}, ..., f\circ\pi_{c_i}}_{s_{i, c_i}} }_{n_i}, \underbrace{f(x_{i, 1}), ..., f(x_{i, k_i})}_{k_i}\} \\
&& = \mathrm{diag}\{ \underbrace{\underbrace{f\circ \pi_1, ..., f\circ \pi_1}_{s_{i, 1}}, ... ,  \underbrace{f\circ \pi_{c_i}, ..., f\circ\pi_{c_i}}_{s_{i, c_i}} }_{n_i}, f(E_i) \},
\end{eqnarray*}
where $s_{i, 1}, ..., s_{i, c_i}\geq 1$ are natural numbers, and $n_i=\sum_{j=1}^{c_i} s_{i, j} $.

A direct calculation shows that the composed map $$\phi_{i, i+j}: \mathrm{C}(X^{c_1 \cdots c_{i-1}}) \to \mathrm{M}_{(n_i+k_i)\cdots(n_{i+j-1} + k_{i+j-1})}(\mathrm{C}(X^{c_1 \cdots c_{i-1}\cdots c_{i+j-1}})) $$ is equal (up to a permutation) to
$$f \mapsto \mathrm{diag}\{\underbrace{f\circ \pi_1, ..., f\circ\pi_{c_i \cdots c_{i+j-1}}}_{n_i \cdots n_{i+j-1}}, \underbrace{f(x_{i, 1}), ..., f(x_{i, k_i}),}_{k_i[(n_{i+1} + k_{i+1})\cdots(n_{i+j-1} + k_{i+j-1})]} \underbrace{f(\cdot), ..., f(\cdot)}_{\cdots}, ...,  \},$$
i.e.,
\begin{eqnarray*}
f \mapsto \mathrm{diag}\{\underbrace{f\circ \pi_1, ..., f\circ\pi_{c_i \cdots c_{i+j-1}}}_{n_i \cdots n_{i+j-1}},  f(E_i)1_{(n_{i+1} + k_{i+1})\cdots(n_{i+j-1} + k_{i+j-1})}, \\ (f(\pi_1(E_{i+1})), ..., f(\pi_{n_i}(E_{i+1})))1_{(n_{i+2} + k_{i+2})\cdots(n_{i+j-1} + k_{i+j-1})}, ...,  \}.
\end{eqnarray*}
So, it can be described as $$\mathrm{diag}\{\underbrace{f\circ \pi_1, ..., f\circ\pi_{c_i \cdots c_{i+j-1}}}_{n_i \cdots n_{i+j-1}}, \textrm{point evaluations}\}.$$

We shall choose $c_i$, $s_{i, 1}, ..., s_{i, c_i}$ (hence $n_i$), and $k_i$ in such a way 
$$ \lim_{j\to\infty} \frac{n_i \cdots n_{i+j}}{(n_i + k_i) \cdots (n_{i+j} + k_{i+j})} = \lim_{j\to\infty} (\frac{n_i}{n_i+k_i}) \cdots (\frac{n_{i+j}}{n_{i+j}+k_{i+j}}) \neq 0.$$
In other words, we require 
\begin{equation}\label{rdg-cond}
 \lim_{i\to\infty} \lim_{j\to\infty} \frac{n_i \cdots n_{i+j}}{(n_i + k_i) \cdots (n_{i+j} + k_{i+j})} = \lim_{i\to\infty}\lim_{j\to\infty} (\frac{n_i}{n_i+k_i}) \cdots (\frac{n_{i+j}}{n_{i+j}+k_{i+j}}) = 1.
 \end{equation}
 Denote the inductive limit algebra by
 $$A(X, (n_i), (k_i), E).$$
In what follows, we shall show, with a mild assumption on $X$ (see Definition \ref{solid-sp}), that this algebra, which is always simple, is independent of the choice of points in the point-evaluation set $E$ (if the number of them at each stage is kept the same; otherwise, it is classified by the $\Kzero$-group together with the radius of comparison). In the case that $X$ is contractible, we shall show that this C*-algebra is classified by the $\Kzero$-group and the radius of comparison, and also, if the latter is zero, the trace simplex (Theorem \ref{n-theorm}). (Even allowing different numbers of coordinate projections and multiplicities, and different numbers of point evaluations.)

\begin{rem}
If $c_i=1$, $i=1, 2, ...$, then $A(X, (n_i), (k_i), E)$ is the C*-algebra constructed by Goodearl in \cite{Goodearl-AH} with real rank not equal to zero. On the other hand, if $s_{i, j}=1$, $i=1, 2, ...,$ $j=1, ..., c_i$, then $A(X, (n_i), (k_i), E)$ is the C*-algebra constructed by Villadsen in \cite{Vill-perf}.
\end{rem}

\section{Mean dimension and radius of comparison}

In this section, let us calculate the mean dimension (as formulated in \cite{Niu-MD}) and radius of comparison (as formulated in \cite{RC-Toms}) of the Goodearl--Villadsen algebras $A(X, (n_i), (k_i), E)$. 

First, recall 
\begin{defn}[Definition 6.1 of \cite{RC-Toms}]\label{RC}
Let $A$ be a C*-algebra. Denote by $\mathrm{M}_n(A)$ the C*-algebra of $n\times n$ matrices over $A$. Regard $\mathrm{M}_n(A)$ as the upper-left corner of $\mathrm{M}_{n+1}(A)$, and consider the union, $$\mathrm{M}_\infty(A) = \bigcup_{n=1}^\infty \mathrm{M}_n(A),$$ the algebra of all finite matrices over $A$.

The radius of comparison of a unital C*-algebra $A$, denoted by $\mathrm{rc}(A)$, is the infimum of the set of real numbers $r > 0$ such that  if $a, b\in (\mathrm{M}_\infty(A))^+$ satisfy
$$\mathrm{d}_\tau(a) + r < \mathrm{d}_\tau(b),\quad \tau\in\mathrm{T}(A),$$ then $a \precsim b$, where $\mathrm{T}(A)$ is the simplex of tracial states. (In \cite{RC-Toms}, the radius of comparison is defined in terms of quasitraces instead of traces; but since all the algebras considered in this paper are nuclear, by \cite{Haagtrace} (see also \cite{BW-N} in the nuclear case), any quasitrace is actually a trace.)
\end{defn}

We also have the following remark on vector bundles:
\begin{rem}\label{ob-chern}
Assume a (complex) vector bundle $E$ over a compact metrizable space $X$ has non-zero Chern class $\mathrm{c}_n(E) \in \mathrm{H}^{n}(X)$. Then the trivial sub-bundles of $E$ have rank at most $\mathrm{rank}(E) - n/2$, as,  if there is a trivial sub-bundle $F$ of rank $r> \mathrm{rank}(E) - n/2$, then 
$$\mathrm{c}(E) = \mathrm{c}(F^c\oplus F) = \mathrm{c}(F^c)\mathrm{c}(F) = \mathrm{c}(F^c),$$
but, since $\mathrm{rank}(F^c)=\mathrm{rank}(E) - \mathrm{rank}(F) < n/2,$ we have $\mathrm{c}_n(F^c) = 0$, and hence $\mathrm{c}_n(E) = \mathrm{c}_n(F^c) =0$, which contradicts the assumption.
\end{rem}

\begin{defn}\label{solid-sp}
Let us call a metrizable compact space $X$ solid if it contains a Euclidean ball of dimension $\mathrm{dim}(X)$ when $\mathrm{dim}(X)$ is finite; when $\mathrm{dim}(X) = \infty$, $X$ solid will mean that $X$ contains a  Euclidean ball of arbitrarily large dimension.
\end{defn}

Note that all finite CW-complexes are solid. The Hawaiian earring and the Hilbert cube are also solid. Not all compact metrizable spaces are solid, as there are such $X$ with $\mathrm{dim}(X \times X) < 2 \cdot  \mathrm{dim}(X)$ which implies that $X$ cannot be solid.

\begin{thm}\label{rcA}
Let $X$ be a metrizable compact space. With $A = A(X, (n_i), (k_i), E)$, one has 
\begin{equation}\label{bound-mdim}
\mathrm{mdim}(A) \leq \frac{\dim(X)}{n_0} \cdot \lim_{i\to\infty}\frac{c_1 \cdots c_i}{(n_1+k_1)\cdots (n_i+k_i)},
\end{equation} 
where $\infty \cdot 0 = 0$ and $\mathrm{mdim}(\cdot)$ is the mean dimension of an AH system introduced in \cite{Niu-MD}.

Moreover, if the metrizable compact  space $X$ is solid, then equality holds in \eqref{bound-mdim}, and the radius of comparison of $A$, $\mathrm{rc}(A)$, is equal to $\frac{1}{2} \mathrm{mdim}(A)$.
\end{thm}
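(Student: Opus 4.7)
My plan is to view $A = A(X,(n_i),(k_i),E)$ as an AH system in the sense of \cite{Niu-MD}, with building blocks $\mathrm{M}_{N_i}(\mathrm{C}(X^{c_1\cdots c_{i-1}}))$ where $N_i = n_0 (n_1+k_1)\cdots(n_{i-1}+k_{i-1})$, and with the diagonal connecting maps of Section \ref{Vill-construction}. The three claims --- the upper bound, the matching lower bound in the solid case, and the identity $\mathrm{rc}(A) = \frac{1}{2}\mathrm{mdim}(A)$ --- will then be attacked in turn.

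For the upper bound on $\mathrm{mdim}(A)$, the general dimension-ratio estimate from \cite{Niu-MD} gives
$$\mathrm{mdim}(A) \leq \limsup_i \frac{\dim(X^{c_1 \cdots c_{i-1}})}{N_i} \leq \frac{\dim(X)}{n_0}\cdot \lim_{i\to\infty}\frac{c_1\cdots c_i}{(n_1+k_1)\cdots(n_i+k_i)},$$
using $\dim(X^m)\leq m\dim(X)$ and an index shift; the limit exists by \eqref{rdg-cond}.

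For the matching lower bound when $X$ is solid, I would fix a Euclidean ball $B\subseteq X$ of dimension $\dim(X)$ and work over $B^{c_1\cdots c_{i-1}}\subseteq X^{c_1\cdots c_{i-1}}$. On such powers of $B$ one can build, as in the original Villadsen construction (tensor powers of Bott bundles on half-dimensional spheres inside $B$), complex vector bundles of arbitrarily high rank with non-vanishing top Chern class. Because the connecting maps $\phi_{i,i+j}$ consist of coordinate projections plus point evaluations, pulling such a bundle forward preserves a top Chern class whose cohomological degree grows proportionally to $c_1\cdots c_{i+j-1}$, while the point-evaluation summands contribute only trivial bundles. Remark \ref{ob-chern} then forces any trivial sub-bundle of the pulled-back bundle to have co-rank at least $\frac{1}{2}$ of the Chern degree, which, passed to the limit, gives
$$\mathrm{mdim}(A) \geq \frac{\dim(X)}{n_0}\cdot \lim_{i\to\infty}\frac{c_1\cdots c_i}{(n_1+k_1)\cdots(n_i+k_i)}.$$

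Finally, for $\mathrm{rc}(A)=\frac{1}{2}\mathrm{mdim}(A)$: the inequality $\mathrm{rc}(A)\leq \frac{1}{2}\mathrm{mdim}(A)$ is the general radius-of-comparison bound for AH systems with diagonal connecting maps established in \cite{Niu-MD}. For the reverse inequality, I would exhibit, for each $\epsilon>0$, a positive element $b$ coming from a non-trivial bundle produced as above together with a trivial element $a$ of slightly smaller rank, arranged so that $\mathrm{d}_\tau(a)+ (\frac{1}{2}\mathrm{mdim}(A)-\epsilon) < \mathrm{d}_\tau(b)$ for every $\tau\in \mathrm{T}(A)$, and yet Remark \ref{ob-chern} prevents $a\precsim b$. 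The main obstacle will be the quantitative persistence of the top Chern class through the connecting maps: one must check that the coordinate-projection portion of $\phi_{i,i+j}$ preserves the Chern class sharply enough that the ratio of its rank to $N_{i+j+1}$ stays close to $\lim c_1\cdots c_i/N_{i+1}$, which is exactly what condition \eqref{rdg-cond} is designed to guarantee.
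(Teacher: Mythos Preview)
Your ingredients are the same as the paper's, but the logical order in your step 2 is off. The Chern-class obstruction to trivial sub-bundles (Remark \ref{ob-chern}) gives information about Cuntz subequivalence, hence about $\mathrm{rc}(A)$, not directly about $\mathrm{mdim}(A)$: there is no route from ``bundles over $X^{d_j}$ have no large trivial sub-bundle'' to ``open covers of $X^{d_j}$ have high degree,'' which is what a direct lower bound on mean dimension would require. The paper's logic---and the correct one---is a sandwich: first the upper bound $\mathrm{mdim}(A)\le \mathrm{RHS}$ gives (via \cite{Niu-MD}) $\mathrm{rc}(A)\le \tfrac12\mathrm{mdim}(A)\le \tfrac12\,\mathrm{RHS}$; then one proves $\mathrm{rc}(A)\ge \tfrac12\,\mathrm{RHS}$ directly by exhibiting, for each $\eps>0$, a trivial projection $r$ and a bundle element $p$ with $\mathrm{d}_\tau(r)+(\tfrac12\,\mathrm{RHS}-4\eps)<\mathrm{d}_\tau(p)$ yet $r\not\precsim p$ (this is exactly your step 3 argument, and matches the paper: a rank-$d/2$ bundle on an even sphere $S\subseteq X^{d_i}$ with nonzero $\mathrm{c}_d$, pushed forward, then K\"unneth plus Remark \ref{ob-chern}). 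The two inequalities together force $\mathrm{rc}(A)=\tfrac12\mathrm{mdim}(A)=\tfrac12\,\mathrm{RHS}$, and in particular the lower bound on $\mathrm{mdim}(A)$ falls out. So your step 2 should be deleted and its content absorbed into step 3, which then yields both remaining claims at once.

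One smaller point on the upper bound: invoking a blanket $\mathrm{mdim}\le \limsup \dim/\mathrm{rank}$ is morally right, but the paper (and the definition in \cite{Niu-MD}) works at the level of open-cover degree, using that the pull-back of any cover by a point-evaluation map has degree zero; this is what makes the coordinate projections the only contributors and gives the clean factor $c_1\cdots c_i$ rather than just $\dim(X^{d_i})$. It is worth making that mechanism explicit rather than treating it as a black box.
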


\begin{proof}
Let us first prove \eqref{bound-mdim}. Consider the $i$th stage, $\mathrm{M}_{m_i}(\mathrm{C}(X^{d_i}))$, where $m_i=n_0(n_1+k_1)\cdots(n_{i-1}+k_{i-1})$ and $d_i=c_1 \cdots c_{i-1}$ (note that all coordinate projections appear, i.e., $s_{i, 1}, ..., s_{i, c_i} \geq 1$, $i=1, 2, ...$), and let $\alpha$ be a finite open cover of $X^{d_i}$. Since the pull-back of $\alpha$ by any constant map has degree zero and $\mathcal D(\alpha) \leq \mathrm{dim}(X^{d_i}) \leq c_1\cdots c_{i-1} \cdot \mathrm{dim}(X)$, we have
$$\mathcal D(\phi_{i, j}(\alpha)) \leq c_i\cdots c_{j}\mathcal \cdot \mathcal D(\alpha) \leq c_1\cdots c_j \cdot \mathrm{dim}(X),$$
where $\mathcal D(\cdot)$ denotes the degree of an open cover,
and then
\begin{eqnarray*}
\lim_{j\to\infty} \frac{\mathcal D(\phi_{i, j}(\alpha))}{m_{j+1}} & \leq & \lim_{j\to\infty} \frac{ c_i\cdots c_{j}\mathcal \cdot \mathcal D(\alpha)}{n_0(n_1+k_1)\cdots (n_j+k_j)}\\
&  = & \frac{\mathcal D(\alpha)}{n_0} \cdot \lim_{j\to\infty}\frac{c_i\cdots c_j}{ (n_1+k_1)\cdots (n_j+k_j) } \\
& \leq & \frac{\mathrm{dim}(X)}{n_0} \cdot \lim_{j\to\infty}\frac{c_1\cdots c_j}{ (n_1+k_1)\cdots (n_j+k_j) },
\end{eqnarray*}
where $\infty \cdot 0 = 0$ in the case that $\mathrm{dim}(X) = \infty$. Passing to the limit as $i\to\infty$, we obtain \eqref{bound-mdim} (with $i$ in the place of $j$). In particular, by \cite{Niu-MD}, we have
\begin{equation}\label{rc-mdim-pre}
\mathrm{rc}(A) \leq \frac{1}{2}\mathrm{mdim}(A) \leq \frac{1}{2} \cdot \frac{\mathrm{dim}(X)}{n_0} \cdot \lim_{i\to\infty}\frac{c_1 \cdots c_i}{(n_1+k_1)\cdots (n_i+k_i)}. 
\end{equation}

Now, assume $X$ is solid (i.e., it contains a Euclidean ball of dimension $\mathrm{dim}(X)$, if $\mathrm{dim}(X) < \infty$, and of arbitrary dimension otherwise), and let us show that 
\begin{equation}\label{bound-rc}
\mathrm{rc}(A) \geq \frac{1}{2} \cdot \frac{\mathrm{dim}(X)}{n_0} \cdot \lim_{i\to\infty}\frac{c_1 \cdots c_i}{(n_1+k_1)\cdots (n_i+k_i)}. 
\end{equation} 
Together with \eqref{rc-mdim-pre}, we will then have $$\mathrm{rc}(A) = \frac{1}{2} \mathrm{mdim}(A) = \frac{1}{2}\cdot \frac{\dim(X)}{n_0} \cdot \lim_{i\to\infty}\frac{c_1 \cdots c_i}{(n_1+k_1)\cdots (n_i+k_i)}.$$

 Set $$\gamma= \lim_{i\to\infty} \frac{c_1 \cdots c_i}{(n_1+k_1)\cdots (n_i+k_i)}.$$ Since \eqref{bound-rc} holds trivially if $\gamma=0$ (as $\infty \cdot 0 = 0$), let us assume that $\gamma \neq 0$ in the rest of the proof.

Suppose that $\mathrm{dim}(X)<\infty$.
Let $\eps>0$ be arbitrary for the time being. 
Choose $i$ sufficiently large that
$$ \frac{c_1\cdots c_{i-1} \cdot \mathrm{dim}(X) - 2}{2n_0(n_1+k_1)\cdots(n_{i-1}+k_{i-1})} >  \frac{\gamma}{2} \cdot \frac{\mathrm{dim}(X)}{n_0} - \eps$$
and
$$ \frac{\mathrm{dim}(X)}{2n_0}\left(\frac{c_1\cdots c_{i-1}}{(n_1+k_1)\cdots(n_{i-1}+k_{i-1})} - \frac{c_{1}\cdots c_{j-1}}{(n_1+k_1)\cdots(n_{j-1}+k_{j-1})}\right)  <\eps,\quad j> i.$$

Since $X$ contains a Euclidean ball of dimension $\mathrm{dim}(X)$, the space $X^{c_1\cdots c_{i-1}}$ contains a Euclidian ball of dimension $c_1\cdots c_{i-1} \cdot \mathrm{dim}(X)$, and hence, if $i$ is large enough, it contains a $d$-dimensional sphere $S$, where $$c_1\cdots c_{i-1} \cdot \mathrm{dim}(X)-2 \leq d \leq c_1\cdots c_{i-1} \cdot \mathrm{dim}(X) - 1$$ and $d$ is non-zero and even.

Pick a (complex) vector bundle $E$ over $S$ such that $\mathrm{rank}(E)=d/2$ and $e:=\mathrm{c}_d(E)\in \mathrm{H}^d(S)$ is non-zero, where $\mathrm{c}_d$ is the $d$th Chern class. (Such a vector bundle exists, as, otherwise, the Chern class of every vector bundle would be trivial, then the Chern character is trivial, and it would not induce a rational isomorphism between the K-group and the cohomology group of the sphere $S$.) %One can pick a vector bundle with K-class $(d/2, 1)\in \Int\oplus \Int\cong\Kzero(S)$. Since, together with the trivial line bundle, this generates $\Kzero(S)$, its total Chern class must be non-zero at level $\mathrm{H}^{d}(S)$.) 
Note that the total Chern class of $E$ is $1 + e$. Denote by $p$ the corresponding projection in $\mathrm{M}_\infty(\mathrm{C}(S))$, and extend $p$ to a positive element of $\mathrm{M}_\infty(\mathrm{C}(X^{d_{i}}))$ such that $\mathrm{rank}(p(x)) \geq d/2$, $x\in X^{d_{i}}$. Denote this element still by $p$. % Also consider the projection $e$ which corresponds to the trivial (line bundle) %vector bundle with rank $\mathrm{rank}(E)-\frac{d}{2}$.  

Note that
$$\mathrm{d}_\tau(p) \geq \frac{d}{2n_0(n_1+k_1)\cdots(n_{i-1}+k_{i-1})} \geq \frac{c_1\cdots c_{i-1} \cdot \mathrm{dim}(X) - 2}{2n_0(n_1+k_1)\cdots(n_{i-1}+k_{i-1})} >  \frac{\gamma}{2} \cdot \frac{\mathrm{dim}(X)}{n_0} - \eps.$$

Consider the element $\phi_{i, \infty}(p)\in A$. For each $j> i$, the restriction of $\phi_{i, j}(p)\in \mathrm{M}_{m_j}(\mathrm{C}(X^{d_j}))$ to $S\times\cdots\times S \subseteq X^{d_j}$ is a projection which corresponds to the vector bundle $$E_j:=(\bigoplus_{s_1} \pi_1^*(E)) \oplus \cdots \oplus (\bigoplus_{s_{c_i\cdots c_{j-1}}} \pi_{c_i\cdots c_{j-1}}^*(E)) \oplus \theta_{j},$$
where $\theta_j$ is a trivial bundle. Then the total Chern class of $E_j$ is 
\begin{eqnarray*}
& & \pi_1^*(1 + c_d)^{s_1}\pi_2^*(1 + c_d)^{s_2}\cdots \pi_{c_{i}\cdots c_{j-1}}^*(1 + c_d)^{s_{c_{i}\cdots c_{j-1}}} \\
& = & \pi_1^*(1 + s_1e) \pi_2^*(1 + s_2 e) \cdots \pi_{c_{i}\cdots c_{j-1}}^*(1 + s_{c_{i}\cdots c_{j-1}}e),
\end{eqnarray*}
and, by the K\"unneth Theorem, it is non-zero at degree $dc_{i}\cdots c_{j-1}$. Hence (see Remark \ref{ob-chern}),  any trivial sub-bundle of $E_j$ has rank at most 
\begin{eqnarray*}
&& \mathrm{rank}(E_j) - \frac{1}{2} dc_{i}\cdots c_{j-1} \\
& = & \mathrm{rank}(E)(n_{i}+k_{i})\cdots(n_{j-1}+k_{j-1}) - \frac{1}{2} dc_{i}\cdots c_{j-1} \\ 
& = & \frac{d}{2} ( (n_{i}+k_{i})\cdots(n_{j-1}+k_{j-1})  - c_{i}\cdots c_{j-1} )\\
& \leq &\frac{\mathrm{dim}(X)}{2}(c_1\cdots c_{i-1} (n_{i}+k_{i})\cdots(n_{j-1}+k_{j-1}) - c_{1}\cdots c_{j-1}) \\
& = &\frac{\mathrm{dim}(X)}{2n_0}(\frac{c_1\cdots c_{i-1}}{(n_1+k_1)\cdots(n_{i-1}+k_{i-1})} - \frac{c_{1}\cdots c_{j-1}}{(n_1+k_1)\cdots(n_{j-1}+k_{j-1})}) n_0(n_1+k_1)\cdots(n_{j-1}+k_{j-1}) \\
&\leq&\eps n_0 (n_1+k_1)\cdots(n_{j-1}+k_{j-1}).
\end{eqnarray*}

Let $r\in A$ be a trivial projection with $2\eps <\mathrm{d}_\tau(r) < 3\eps$. Then $$\mathrm{d}_\tau(r) + (\frac{\gamma}{2}\cdot \frac{\mathrm{dim}(X)}{n_0}-4\eps) < \mathrm{d}_\tau(p),\quad \tau\in\mathrm{T}(A).$$
But the rank of the vector bundle of $r$ at the stage $j$ is at least
$$2\eps n_0(n_1+k_1)\cdots(n_{j-1}+k_{j-1}) >  \eps n_0 (n_1+k_1)\cdots(n_{j-1}+k_{j-1}),$$
which implies that $r$ is not Cuntz subequivalent to $p$, and therefore, $$\mathrm{rc}(A)\geq \frac{\gamma}{2} \cdot \frac{\mathrm{dim}(X)}{n_0} - 4\eps.$$
Since $\eps$ is arbitrary, this implies $\mathrm{rc}(A)\geq \frac{\gamma}{2} \cdot \frac{\mathrm{dim}(X)}{n_0}$.

If $X$ is infinite-dimensional (recall still $\lim_{i\to\infty} \frac{c_1 \cdots c_i}{(n_1+k_1)\cdots (n_i+k_i)} = \gamma \neq 0$), then the argument above (choose $d$ arbitrarily large) shows that $\mathrm{rc}(A)$ is arbitrarily large, and hence $\mathrm{rc}(A)=\infty$. So, \eqref{bound-rc} always holds, as desired.
\end{proof}

\begin{cor}
For any $r \in [0, +\infty]$, there is a Villadsen algebra $A$ such that $\mathrm{rc}(A) = r$.
\end{cor}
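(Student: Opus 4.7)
The plan is to use the formula $\mathrm{rc}(A) = \tfrac{1}{2}\cdot\tfrac{\dim(X)}{n_0}\cdot\gamma$ from Theorem \ref{rcA} (valid whenever $X$ is solid), where $\gamma = \lim_{i\to\infty} \tfrac{c_1\cdots c_i}{(n_1+k_1)\cdots(n_i+k_i)}$, and to realise each value $r \in [0,+\infty]$ by a careful choice of the seed space $X$ and the multiplicity sequences. I would split into three cases according to whether $r$ equals $0$, $+\infty$, or lies in $(0,+\infty)$.

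If $r = 0$, I would take $X$ to be a single point; then $\dim(X) = 0$, condition \eqref{rdg-cond} is trivial, the resulting algebra is UHF, and Theorem \ref{rcA} gives $\mathrm{rc}(A) = 0$ directly. If $r = +\infty$, I would take $X$ to be the Hilbert cube $[0,1]^{\mathbb N}$, which is solid and infinite-dimensional, together with $n_0 = 1$, $c_i = n_i = 2^i$, $s_{i,j} = 1$, and $k_i = 1$. Then $\sum_i 1/(2^i+1)$ converges, which verifies \eqref{rdg-cond} and simultaneously forces $\gamma = \prod_i 2^i/(2^i+1) > 0$; the last paragraph of the proof of Theorem \ref{rcA} then gives $\mathrm{rc}(A) = +\infty$.

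The substantive case is $r \in (0,+\infty)$. Here I would fix an integer $d > 2r$, take $X = [0,1]^d$ (solid and finite-dimensional) and $n_0 = 1$, so the problem reduces to constructing parameters with $\gamma = 2r/d \in (0,1)$. I would set $n_i = c_i$, $s_{i,j} = 1$, and $k_i = 1$, so that each factor of $\gamma$ equals $c_i/(c_i+1)$, and define the $c_i$ greedily: set $\gamma^{(0)} = 2r/d$, and at step $i$ take $c_i = \lceil \gamma^{(i-1)}/(1-\gamma^{(i-1)}) \rceil + 1$ (so that $c_i/(c_i+1) > \gamma^{(i-1)}$ strictly), then put $\gamma^{(i)} = \gamma^{(i-1)}(c_i+1)/c_i$. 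A short estimate shows that $1 - \gamma^{(i)}$ decays quadratically in $1 - \gamma^{(i-1)}$, so $\gamma^{(i)} \to 1$ doubly-exponentially fast; this simultaneously yields the exact identity $\prod_i c_i/(c_i+1) = 2r/d$ and the summability $\sum_i 1/(c_i+1) < \infty$ required by \eqref{rdg-cond}. The point-evaluation sets $E_i$, of the prescribed cardinality $k_i = 1$, can then be chosen by hand to satisfy the density condition of Section \ref{Vill-construction}, using separability of each $X^{c_1\cdots c_{i-1}}$.

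The main obstacle I anticipate is the integer constraint: one must hit an arbitrary real $\gamma^{(0)} \in (0,1)$ exactly, not just densely, as an infinite product of rationals of the form $c/(c+1)$. The ``$+1$'' in the greedy rule is the key device --- it forces $\gamma^{(i)} < 1$ strictly at every stage (so the recursion never terminates) and produces the quadratic decay of $1-\gamma^{(i)}$, which automatically delivers both the convergence $\prod_i c_i/(c_i+1) = 2r/d$ and the summability condition \eqref{rdg-cond}.
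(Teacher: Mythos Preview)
Your argument is correct and follows the same route as the paper's: invoke Theorem~\ref{rcA} and choose the seed space and the sequences so that $\tfrac{1}{2}\dim(X)\,\gamma/n_0$ equals $r$. The paper's proof differs only cosmetically --- it uses the Cantor set rather than a point for $r=0$, and for $r\in(0,\infty)$ it simply asserts that any $s\in(0,1)$ factors as $\prod_i p_i/q_i$ with $p_i/q_i\in(0,1)$ rational (then sets $n_i=c_i=p_i$, $k_i=q_i-p_i$), whereas you supply an explicit Engel-type construction with $k_i=1$; one small imprecision in your write-up is that condition~\eqref{rdg-cond} is not literally ``trivial'' when $X$ is a point (it still constrains the sequences $(n_i),(k_i)$), but since $\dim X=0$ forces $\mathrm{rc}=0$ regardless of $\gamma$, any choice satisfying it (e.g.\ the one from your $r=+\infty$ case) works.
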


\begin{proof}
Let us assume that $r \in (0, +\infty)$. Pick a natural number $d$ such that $2r < d$, and consider $s:=2r/d \in (0, 1)$. Then pick a sequence of rational numbers $p_i/q_i \in (0, 1)$, $i=1, 2, ...$, such that $$\frac{p_1}{q_1} \cdot \frac{p_2}{q_2} \cdots = s.$$ Write $$n_i = p_i \quad\mathrm{and}\quad k_i = q_i - p_i,\quad i=1, 2, ... , $$
and we have
$$(\frac{n_1}{n_1 + k_1}) (\frac{n_2}{n_2 + k_2}) \cdots = s.$$
Let $A$ be a Villadsen algebra associated with $(n_i)$ and $(k_i)$ (and $c_i = n_i$, $i=1, 2, ...$, $n_0=1$) with the seed space $X = [0, 1]^d$, which is solid. Then it follows from Theorem \ref{rcA} that
$$\mathrm{rc} = \frac{1}{2} d (\frac{n_1}{n_1 + k_1}) (\frac{n_2}{n_2 + k_2}) \cdots = \frac{1}{2}ds = r.$$

If $r = +\infty$, then one can construct a Villadsen algebra with the seed space $X=[0, 1]^\infty$ and with the sequences $(n_i)$, $(k_i)$ as above. Then the resulting algebra has $\mathrm{rc}(A) = +\infty$.

If $r = 0$, then one can construct a Villadsen algebra with the seed space $X$ being the Cantor set and with the sequences $(n_i)$, $(k_i)$ as above. Then the resulting algebra has $\mathrm{rc}(A) = 0$.
\end{proof}

\begin{thm}\label{cor-almost-1}
If $\mathrm{rc}(A) > 0$, then
\begin{equation}\label{prod-m-1}
\lim_{i\to\infty}\lim_{j\to\infty} (\frac{c_i}{n_i}) \cdots (\frac{c_{i+j}}{n_{i+j}}) = 1 
\end{equation}
and  
\begin{equation}\label{almost-m-1}
\lim_{i\to\infty}\lim_{j\to\infty} \frac{\abs{\{s_k: s_k=1, k=1, ..., c_i\cdots c_{i+j}\}}}{n_i\cdots n_{i+j}} = 1,
\end{equation}
where
$$\phi_{i, j+1} = \mathrm{diag}\{ \underbrace{\underbrace{\pi_1^*, ..., \pi_1^*}_{s_1}, ..., \underbrace{\pi_{c_{i}\cdots c_{i+j}}^*, ..., \pi_{c_i\cdots c_{i+j}}^*}_{s_{c_i\cdots c_{i+j}}}}_{n_i\cdots n_{i+j}},\  \mathrm{point\  evaluations}\}.$$
\end{thm}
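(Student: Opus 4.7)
The plan is to derive both limits directly from the upper bound on the radius of comparison obtained in Theorem \ref{rcA}, together with the defining condition \eqref{rdg-cond}, and then to deduce \eqref{almost-m-1} from \eqref{prod-m-1} by an elementary counting argument. First I would observe that the hypothesis $\mathrm{rc}(A) > 0$ forces
$$\gamma := \lim_{i\to\infty} \frac{c_1 \cdots c_i}{(n_1+k_1)\cdots(n_i+k_i)}$$
to be strictly positive: indeed, Theorem \ref{rcA} yields (even without solidity) $\mathrm{rc}(A) \leq \tfrac{\dim(X)}{2n_0}\gamma$ with the convention $\infty \cdot 0 = 0$, so $\gamma = 0$ would force $\mathrm{rc}(A) = 0$. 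Note also $\gamma \leq 1$ since each factor defining the limit is $\leq 1$.

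For \eqref{prod-m-1}, I would use the telescoping identity
$$\frac{c_i \cdots c_{i+j}}{n_i \cdots n_{i+j}} = \frac{c_1 \cdots c_{i+j}}{(n_1+k_1)\cdots(n_{i+j}+k_{i+j})} \cdot \frac{(n_1+k_1)\cdots(n_{i-1}+k_{i-1})}{c_1\cdots c_{i-1}} \cdot \frac{(n_i+k_i)\cdots(n_{i+j}+k_{i+j})}{n_i\cdots n_{i+j}}.$$
As $j\to\infty$ with $i$ fixed, the first factor tends to $\gamma$, the second is constant in $j$, and the third tends (by \eqref{rdg-cond}) to a quantity which converges to $1$ as $i\to\infty$. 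The second factor itself tends to $\gamma^{-1}$ as $i\to\infty$, by the very definition of $\gamma$. Combining, the iterated limit is $\gamma \cdot \gamma^{-1} \cdot 1 = 1$, which is \eqref{prod-m-1}. The fact that $\gamma > 0$ ensures all denominators are uniformly bounded away from zero for large $i$, so the order of limits causes no trouble.

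For \eqref{almost-m-1}, I would give a purely arithmetic argument from \eqref{prod-m-1}. Set $N := c_i \cdots c_{i+j}$ and $M := n_i \cdots n_{i+j} = \sum_{k=1}^N s_k$, and let $N_1$ denote the number of indices $k\in\{1,\dots,N\}$ with $s_k = 1$. Since each of the remaining $s_k$ is at least $2$, one has $M \geq N_1 + 2(N - N_1) = 2N - N_1$, whence
$$\frac{2N}{M} - 1 \;\leq\; \frac{N_1}{M} \;\leq\; \frac{N}{M} \;\leq\; 1.$$
By \eqref{prod-m-1}, both outer bounds tend to $1$ under the iterated limit, so by squeezing $N_1/M \to 1$, which is \eqref{almost-m-1}.

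The argument is entirely elementary once one sets up the factorization and the convention for $\infty\cdot 0$. The only potential pitfall is ensuring the order of limits in the middle factor of the telescoping identity; this is harmless precisely because the hypothesis $\mathrm{rc}(A) > 0$, through Theorem \ref{rcA}, supplies $\gamma > 0$.
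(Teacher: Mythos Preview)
Your proof is correct and follows essentially the same approach as the paper. For \eqref{prod-m-1} the paper divides the tail product $\prod_{l\geq i}\frac{c_l}{n_l+k_l}$ (which tends to $1$ because $\gamma>0$) by the tail product from \eqref{rdg-cond}, while you factor through the full products from index $1$ and recombine; these are the same computation organized slightly differently. For \eqref{almost-m-1} your squeeze argument via $M\geq N_1+2(N-N_1)$ is exactly the paper's counting argument (the paper phrases it in terms of $b_{i,j}=N-N_1$, but the inequality and the conclusion are identical).
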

\begin{proof}
Since $\mathrm{rc}(A) > 0$, we have $$\lim_{i\to\infty}\frac{c_1 \cdots c_i}{(n_1+k_1)\cdots (n_i+k_i)} >0,$$ and hence
$$\lim_{i\to\infty} \lim_{j\to\infty}\frac{c_i \cdots c_{i+j}}{(n_i+k_i)\cdots (n_{i+j}+k_{i+j})} =1.$$
Comparing this with \eqref{rdg-cond} (and since both limits are non-zero), we have \eqref{prod-m-1}:
\begin{eqnarray*}
1& = & \lim_{i\to\infty}\frac{ \displaystyle \lim_{j\to\infty} \frac{c_i \cdots c_{i+j}}{(n_i+k_i)\cdots (n_{i+j}+k_{i+j})}  }{\displaystyle \lim_{j\to\infty}\frac{n_i \cdots n_{i+j}}{(n_i+k_i)\cdots (n_{i+j}+k_{i+j})} } = \lim_{i\to\infty}\lim_{j\to\infty}\frac{ \displaystyle \frac{c_i \cdots c_{i+j}}{(n_i+k_i)\cdots (n_{i+j}+k_{i+j})} }{\displaystyle \frac{n_i \cdots n_{i+j}}{(n_i+k_i)\cdots (n_{i+j}+k_{i+j})} } \\
& = & \lim_{i\to\infty}\lim_{j\to\infty} (\frac{c_i}{n_i}) \cdots (\frac{c_{i+j}}{n_{i+j}}).
\end{eqnarray*}

As for \eqref{almost-m-1}, note that $n_i\cdots n_{i+j} = s_1 + \cdots + s_{c_i\cdots c_{i+j}}$, and hence
$$\frac{c_i\cdots c_{i+j}}{n_i\cdots n_{i+j}} = \frac{c_i\cdots c_{i+j}}{s_1 + \cdots + s_{c_i\cdots c_{i+j}}}\leq \frac{c_i\cdots c_{i+j}}{(c_i\cdots c_{i+j} - b_{i, j}) + 2 b_{i, j} } = \frac{c_i\cdots c_{i+j}}{c_i\cdots c_{i+j} + b_{i, j} } \leq 1,$$ where
$$b_{i, j}:= \abs{\{s_k: s_k>1, k=1, ..., c_i\cdots c_{i+j}\}}.$$
Together with \eqref{prod-m-1}, this yields 
$$1 = \lim_{i\to\infty}\lim_{j\to\infty}  \frac{c_i\cdots c_{i+j}}{c_i\cdots c_{i+j} + b_{i, j} } = \lim_{i\to\infty}\lim_{j\to\infty}  \frac{1}{1 + \frac{b_{i, j}}{c_i\cdots c_{i+j}} };$$ therefore (note that $s_k$, $k=1, ..., c_i\cdots c_{i+j}$, are non-zero), $$1=\lim_{i\to\infty}\lim_{j\to\infty} \frac{c_i\cdots c_{i+j} - b_{i, j}}{c_i\cdots c_{i+j}} = \lim_{i\to\infty}\lim_{j\to\infty} \frac{\abs{\{s_k: s_k=1, k=1, ..., c_i\cdots c_{i+j}\}}}{c_i\cdots c_{i+j}} .$$
Using \eqref{prod-m-1} again, one obtains \eqref{almost-m-1}.
\end{proof}

\section{Intertwinings of trace simplexes}\label{trace-decomp}

\subsection{Trace simplex of the Villadsen algebra}

Let us first observe that, under Condition \eqref{rdg-cond}, the trace simplex of the Villadsen algebra is independent of (the number and the location of) the point evaluations.

Denote by $A$ the (non-simple) limit of the inductive sequence
\begin{displaymath}
\xymatrix{
\mathrm{M}_{n_0}(\mathrm{C}(X)) \ar[r] & \mathrm{M}_{n_0n_1}(\mathrm{C}(X^{c_1})) \ar[r] & \mathrm{M}_{n_0n_1n_2}(\mathrm{C}(X^{c_1c_2})) \ar[r] & \cdots,
}
\end{displaymath}
where the seed for the $i$th-stage map,
$$\phi_i: \mathrm{C}(X^{c_1 \cdots c_{i-1}}) \to \mathrm{M}_{n_i+k_i}(\mathrm{C}(X^{c_1 \cdots c_{i-1}c_{i}})),$$ is defined by
\begin{eqnarray*} 
f & \mapsto & \mathrm{diag}\{ \underbrace{\underbrace{f\circ \pi_1, ..., f\circ \pi_1}_{s_{i, 1}}, ... ,  \underbrace{f\circ \pi_{c_i}, ..., f\circ\pi_{c_i}}_{s_{i, c_i}} }_{n_i} \}.
\end{eqnarray*}
%where $s_{i, 1}, ..., s_{i, c_i}\geq 1$ are natural numbers, and $n_i=\sum_{j=1}^{c_i} s_{i, j} $.

\begin{lem}\label{trace-int-1}
Let $A_E$ be a Villadsen algebra with point-evaluation set $E$ which satisfies Condition \eqref{rdg-cond}. Then 
$\mathrm{T}(A_E) \cong \mathrm{T}(A).$
\end{lem}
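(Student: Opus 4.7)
The plan is to realize both trace simplices as inverse limits of spaces of Borel probability measures on the seeds $X^{c_1\cdots c_{i-1}}$. Under the standard identification of a tracial state on $\mathrm{M}_m(\mathrm{C}(Y))$ with a Borel probability measure on $Y$, the dual of the $i$th connecting map for $A_E$ takes the form $\mu\mapsto\alpha_i M_i(\mu)+(1-\alpha_i)\delta_i$, while the dual for $A$ is simply $\mu\mapsto M_i(\mu)$, where
\[
\alpha_i:=\frac{n_i}{n_i+k_i},\qquad M_i(\mu):=\frac{1}{n_i}\sum_{j=1}^{c_i}s_{i,j}(\pi_j)_{*}\mu,\qquad \delta_i:=\frac{1}{k_i}\sum_{j=1}^{k_i}\delta_{x_{i,j}}.
\]
Set $\lambda_i:=\prod_{j\geq 0}\alpha_{i+j}$; condition \eqref{rdg-cond} guarantees $\lambda_i>0$ for every $i$ and $\lambda_i\to 1$. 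Thus $\mathrm{T}(A_E)$ is the set of compatible sequences $(\mu_i)$ with $\mu_i=\alpha_iM_i(\mu_{i+1})+(1-\alpha_i)\delta_i$, and $\mathrm{T}(A)$ the set of sequences $(\nu_i)$ with $\nu_i=M_i(\nu_{i+1})$.

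Given $(\mu_i)\in\mathrm{T}(A_E)$, iterating the compatibility $N+1$ times yields
\[
\mu_i=\lambda_i^{(N)}\,M_i\cdots M_{i+N}(\mu_{i+N+1})+\sum_{j=0}^{N}\beta_{i,j}\,M_i\cdots M_{i+j-1}(\delta_{i+j}),
\]
with $\lambda_i^{(N)}:=\prod_{j=0}^{N}\alpha_{i+j}$ and $\beta_{i,j}:=(\alpha_i\cdots\alpha_{i+j-1})(1-\alpha_{i+j})$. The leading term is monotonically decreasing in $N$ as a sequence of positive measures, since unfolding one more step subtracts the nonnegative measure $\lambda_i^{(N)}(1-\alpha_{i+N+1})M_i\cdots M_{i+N}(\delta_{i+N+1})$; it therefore converges weak-$*$ to a positive measure of total mass $\lambda_i$, and dividing by $\lambda_i$ produces a probability measure $\nu_i$. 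Using $\lambda_i=\alpha_i\lambda_{i+1}$, the limits at consecutive levels satisfy $M_i(\nu_{i+1})=\nu_i$, giving an affine, weak-$*$ continuous map $\Psi\colon\mathrm{T}(A_E)\to\mathrm{T}(A)$.

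In the opposite direction, for $(\nu_i)\in\mathrm{T}(A)$ define
\[
\mu_i:=\lambda_i\nu_i+\sum_{j\geq 0}\beta_{i,j}\,M_i\cdots M_{i+j-1}(\delta_{i+j}),
\]
which is a probability measure because $\sum_{j\geq 0}\beta_{i,j}=1-\lambda_i$. Using $\nu_i=M_i(\nu_{i+1})$ together with the identity $1-\lambda_i=(1-\alpha_i)+\alpha_i(1-\lambda_{i+1})$, a direct substitution shows $(\mu_i)$ satisfies the $A_E$-compatibility, defining an affine, weak-$*$ continuous map $\Phi\colon\mathrm{T}(A)\to\mathrm{T}(A_E)$. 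The relation $\Phi\circ\Psi=\mathrm{id}$ is precisely the iterated decomposition above taken to the limit $N\to\infty$, and $\Psi\circ\Phi=\mathrm{id}$ follows because the point-evaluation tail of $\Phi(\nu)_{i+N+1}$ carries total mass $1-\lambda_{i+N+1}\to 0$, so upon applying $\lambda_i^{(N)}M_i\cdots M_{i+N}$ only the $\lambda_i\nu_i$-piece survives.

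The main technical point is the monotone-decrease argument that underwrites the existence of the limit defining $\Psi$ (and equally the vanishing of the tail in $\Psi\circ\Phi$). Once this is in hand, everything else reduces to algebraic bookkeeping with the constants $\alpha_i$, $\lambda_i$, $\beta_{i,j}$, combined with the elementary facts that each $M_i$ is a continuous affine map on probability measures and that $\sum_{j\geq 0}\beta_{i,j}=1-\lambda_i$.
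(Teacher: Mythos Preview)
Your argument is correct but proceeds along a genuinely different route from the paper. The paper works on the predual side: it identifies $\mathrm{Aff}_\Real(\mathrm{T}(\mathrm{M}_s(\mathrm{C}(Y))))$ with $\mathrm{C}_\Real(Y)$, computes directly that
\[
\norm{(\pi_{i,i+j})^*(h)-(\phi^{(E)}_{i,i+j})^*(h)}_\infty \leq 2\Bigl(1-\tfrac{n_i\cdots n_{i+j-1}}{(n_i+k_i)\cdots(n_{i+j-1}+k_{i+j-1})}\Bigr)
\]
for $\norm{h}_\infty\leq 1$, and then invokes an approximate intertwining of the two inductive systems of Banach spaces $(\mathrm{C}_\Real(X^{d_i}),(\phi^{(E)})^*)$ and $(\mathrm{C}_\Real(X^{d_i}),\pi^*)$, using the identity maps as the crosswise arrows. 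This yields an isometric isomorphism of the limits $\mathrm{Aff}_\Real(\mathrm{T}(A_E))$ and $\mathrm{Aff}_\Real(\mathrm{T}(A))$, hence of the simplices.

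You instead stay on the primal side and write down the affine homeomorphism explicitly: the map $\Psi$ strips off the absolutely convergent point-evaluation tail $\sum_j\beta_{i,j}M_i\cdots M_{i+j-1}(\delta_{i+j})$ (of total mass $1-\lambda_i$) from each $\mu_i$ and renormalizes, while $\Phi$ grafts the same tail back onto $\lambda_i\nu_i$. Your monotone-decrease argument for the existence of $\lim_N\lambda_i^{(N)}M_i\cdots M_{i+N}(\mu_{i+N+1})$ is fine; one could also note more simply that the tail sum converges in total variation (its masses sum to $1-\lambda_i$), so the leading term is just $\mu_i$ minus a norm-convergent series. The paper's approach is shorter and deliberately parallels the approximate-intertwining machinery used later (e.g.\ in Lemma~\ref{trace-int}, where the crosswise maps must come from actual $*$-homomorphisms); your approach is more self-contained and yields an explicit formula for the isomorphism, at the cost of some bookkeeping with the constants $\alpha_i$, $\lambda_i$, $\beta_{i,j}$.
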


\begin{proof}
Let $\delta_1, \delta_2, ...$ be a decreasing sequence of strictly positive numbers with $ \sum_{n=1}^\infty \delta_n < 1.$
Identifying $\mathrm{Aff}_\Real(\mathrm{T}(\mathrm{M}_s(\mathrm{C}(Y)))) = \mathrm{C}_\Real(Y)$ for any compact metrizable space $Y$, and note that $(\pi_{i, i+j})^*$ is given by 
$$\mathrm{C}_\Real(X^{c_1\cdots c_i}) \ni h \mapsto \frac{1}{n_i\cdots n_{i+j-1}}( \underbrace{h \circ \pi_1 + \cdots + h\circ\pi_{c_i \cdots c_{i+j-1}}}_{n_i\cdots n_{i+j-1}})  \in  \mathrm{C}_\Real(X^{c_1\cdots c_{i+j-1}}).$$ 
Then a straightforward calculation shows that, for any $h\in \mathrm{C}_\Real(X^{c_1\cdots c_i})$ with $\norm{h}_\infty \leq 1$,
\begin{eqnarray*}
&& \norm{(\pi_{i, i+j})^*(h) - (\phi^{(E)}_{i, i+j})^*(h)}_\infty \\
& = & \| \frac{1}{n_i\cdots n_{i+j-1}}( \underbrace{h \circ \pi_1 + \cdots + h \circ\pi_{c_i \cdots c_{i+j-1}}}_{n_i\cdots n_{i+j-1}}) \\
 & &      - \frac{1}{(n_i+k_i)\cdots (n_{i+j-1} + k_{i+j-1})}( \underbrace{h \circ \pi_1 + \cdots + h \circ\pi_{c_i \cdots c_{i+j-1}}}_{n_i\cdots n_{i+j-1}} + \textrm{point evaluations}) \|_\infty \\
 & \leq & (\frac{1}{n_i\cdots n_{i+j-1}} - \frac{1}{(n_i+k_i)\cdots (n_{i+j-1} + k_{i+j-1})}) (n_i\cdots n_{i+j-1})  \\
 & & +1-\frac{n_i\cdots n_{i+j-1}}{(n_i+k_i)\cdots (n_{i+j-1} + k_{i+j-1})} \\
 & = & 2 (1-\frac{n_i\cdots n_{i+j-1}}{(n_i+k_i)\cdots (n_{i+j-1} + k_{i+j-1})} ),
\end{eqnarray*}
which, by Condition \eqref{rdg-cond}, is arbitrarily small if $i$ is sufficiently large. Therefore, there is a diagram
\begin{equation*}
\xymatrix{
\mathrm{C}_{\Real}(X) \ar[r]^{(\phi_{1, i_1}^{(E)})^*} \ar[dr]^{(\pi_{1, i_1})^*} & \mathrm{C}_{\Real}(X^{d_{i_1}}) \ar[r]^{(\phi_{i_1, i_2}^{(E)})^*}  & \mathrm{C}_{\Real}(X^{d_{i_2}}) \ar[r]  & \cdots \ar[r] & (\mathrm{Aff}_\Real(\mathrm{T}(A_E)), \norm{\cdot}_\infty) \\
\mathrm{C}_{\Real}(X) \ar[r]^{(\pi_{1, i_1})^*}  & \mathrm{C}_{\Real}(X^{d_{i_1}})\ar[r]^{(\pi_{i_1, i_2})^*} \ar[ur]^{(\pi_{i_1, i_2})^*}  & \mathrm{C}_{\Real}(X^{d_{i_2}}) \ar[r] & \cdots \ar[r] & (\mathrm{Aff}_\Real(\mathrm{T}(A)), \norm{\cdot}_\infty)
}
\end{equation*}
with
$$\norm{(\pi_{i_{s+1}, i_{s+2}})^* \circ (\pi_{i_s, i_{s+1}})^*(h) - \phi^{(E)}_{i_{s+1}, i_{s+2}} \circ \phi^{(E)}_{i_s, i_{s+1}}(h)}_\infty < \delta_s$$
for any $s=0, 2, ...$, any $h\in \mathrm{C}_\Real(X^{d_{i_s}})$ with $\norm{h}_\infty \leq 1$. 
This implies in particular that $\mathrm{T}(A_E) \cong \mathrm{T}(A).$
\end{proof}

Let us calculate the trace simplex of $A$. Note that $\mathrm{T}(A)$ is homeomorphic to the limit of the following affine projective system:
\begin{displaymath}
\xymatrix{
\mathcal{M}_1(X) & \mathcal{M}_1(X^{c_1}) \ar[l] & \mathcal{M}_1(X^{c_1c_2}) \ar[l] & \cdots \ar[l] 
}
\end{displaymath}
where $\mathcal M_1(\cdot)$ denotes the simplex of probability measures and the connecting map $\theta_i: \mathcal{M}_1(X^{c_1\cdots c_{i}}) \to \mathcal{M}_1(X^{c_1\cdots c_{i-1}})$ is given by
$$\theta_i(\delta_{(x_1, ..., x_{c_i})}) = \frac{1}{n_i}(\underbrace{\delta_{x_1} + \cdots + \delta_{x_1}}_{s_{i, 1}} + \cdots + \underbrace{\delta_{x_{c_i}} + \cdots + \delta_{x_{c_i}}}_{s_{i, c_i}}),\quad x_1, ..., x_{c_i} \in X^{c_1\cdots c_{i-1}},$$
where $\delta_{x}$ denotes the Dirac measure concentrating at $x$.

The following lemma is a simple observation:
\begin{lem}\label{extrem}
Let $\tau = (\mu_i)$ be a tracial state of $A$, where $\mu_i$, $i=1, 2, ...$, is a probability measure of $X^{c_1\cdots c_{i-1}}$. If $\mu_i$ are Dirac measures for sufficiently large $i$, then $\tau$ is extreme. 
\end{lem}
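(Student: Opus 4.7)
The plan is to unpack the definition of extremality directly in the inverse-limit picture of $\mathrm{T}(A)$ just described. Suppose $\tau = \lambda \tau' + (1-\lambda) \tau''$ with $\lambda \in (0,1)$ and $\tau', \tau'' \in \mathrm{T}(A)$, and write $\tau' = (\mu'_i)$ and $\tau'' = (\mu''_i)$ in the inverse system. I want to conclude that $\tau' = \tau'' = \tau$, which in the inverse-limit description means $\mu'_i = \mu''_i = \mu_i$ for every $i$.

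The key step is to exploit the hypothesis at large indices. For each $i$, the level-$i$ marginal satisfies $\mu_i = \lambda \mu'_i + (1-\lambda) \mu''_i$. Once $i$ is large enough that $\mu_i$ is a Dirac measure $\delta_{x_i}$, the standard fact that point masses are extreme points of $\mathcal{M}_1(X^{c_1 \cdots c_{i-1}})$ forces $\mu'_i = \mu''_i = \delta_{x_i} = \mu_i$. So both $\tau'$ and $\tau''$ agree with $\tau$ at every sufficiently large level.

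To propagate this agreement back to small indices, I invoke the compatibility condition $\theta_i(\mu'_{i+1}) = \mu'_i$ (and the analogous one for $\mu''$) that defines elements of the inverse limit. Starting from some $N$ past which all three sequences agree, a finite downward induction using $\theta_{N-1}, \theta_{N-2}, \ldots, \theta_1$ shows that $\mu'_i = \theta_i \theta_{i+1} \cdots \theta_{N-1}(\mu'_N) = \theta_i \theta_{i+1} \cdots \theta_{N-1}(\mu_N) = \mu_i$, and likewise for $\mu''$. Hence $\tau' = \tau'' = \tau$, establishing extremality.

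There is no substantial obstacle here; the only thing to check carefully is that Dirac measures are genuinely extreme in $\mathcal{M}_1$ of a compact metrizable space (standard), and that the compatibility relations with the $\theta_i$ are preserved under convex decomposition (immediate, since each $\theta_i$ is affine and weak-$*$ continuous). The argument does not require $X$ to be solid or any property of the $(s_{i,j})$, $(c_i)$, $(n_i)$, $(k_i)$ beyond what is built into the definition of the projective system.
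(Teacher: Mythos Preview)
Your proof is correct and follows essentially the same approach as the paper: decompose at each level, use extremality of Dirac measures to force agreement at large indices, and conclude equality of the inverse-limit elements. The paper's version is terser, leaving the propagation to small indices implicit in the phrase ``and hence $(\nu_i^{(1)}) = (\nu_i^{(2)}) = (\mu_i)$'', whereas you spell it out via the connecting maps $\theta_i$; but the underlying argument is identical.
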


\begin{proof}
Assume $$(\mu_i) = \alpha(\nu^{(1)}_i) + (1-\alpha)(\nu^{(2)}_i)$$ for some $\alpha\in(0, 1)$, where $\nu^{(1)}_i$ and $\nu^{(2)}_i$ are probablity measures of $X^{c_1\cdots c_{i-1}}$.  Since $\mu_i$ is extreme for sufficiently large $i$,  we have that $$\nu_i^{(1)} = \nu_i^{(2)} = \mu_i$$ for sufficiently large $i$, and hence $ (\nu_i^{(1)}) = (\nu_i^{(2)}) = (\mu_i)$, as desired.
\end{proof}

\begin{rem}
Note that, since the multiplicity of the coordinate projections are nonzero ($s_{i, j} \neq 0$), if $\theta_i(\mu)$ is a Dirac measure, then $\mu$ must be a Dirac measure. 
\end{rem}

Pick a point $x = (x_1, ..., x_{c_1\cdots c_{i-1}}) \in X^{c_1\cdots c_{i-1}}$, and then consider the trace $\tau_x$ of $A$ which is defined by
$$\tau_x=(\theta_{1, i}(\delta_x), ... , \theta_{i-1, i}(\delta_x),  \delta_x, \delta_{(\underbrace{x, ..., x}_{c_i})}, ..., \delta_{(\underbrace{x, ..., x}_{c_i\cdots c_{i+k}})} ...).$$
By the lemma above, $\tau_x$ is an extreme trace. Also note that if $x \neq y$, then $\tau_x \neq \tau_y$. Hence if the seed space $X$ is not a singleton, the trace simplex is not a singleton.

The following lemma is a direct consequence of the Krein-Milman Theorem.
\begin{lem}\label{discrete}
Let $\mathcal F \subseteq \mathrm{C}(X)$ be a finite set, let $\mu \in \mathcal{M}_1(X)$, and let $\eps>0$. Then, there is $N \in \mathbb N$ such that for any $n > N$, there are $x_1, ..., x_n \in X$ such that
$$\abs{\mu(f) - \frac{1}{n}(f(x_1) + \cdots + f(x_n))} < \eps,\quad f\in\mathcal F.$$
\end{lem}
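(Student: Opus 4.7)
The plan is to first approximate $\mu$ by a finitely-supported convex combination of Dirac measures using the Krein--Milman theorem, and then round the coefficients of that combination to obtain a uniform empirical measure of the prescribed cardinality $n$.

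For the first step, since $X$ is compact, $\mathcal{M}_1(X)$ is weak-$*$ compact and convex, with extreme points exactly the Dirac measures $\{\delta_x : x\in X\}$. By Krein--Milman, $\mu$ belongs to the weak-$*$ closure of $\mathrm{conv}\{\delta_x : x\in X\}$. Since a basic weak-$*$ neighborhood of $\mu$ is cut out by finitely many test functions, one can choose $y_1, \dots, y_k \in X$ and $\alpha_1, \dots, \alpha_k \geq 0$ with $\sum_i \alpha_i = 1$ such that
$$ \abs{\mu(f) - \sum_{i=1}^k \alpha_i f(y_i)} < \eps/2, \quad f\in \mathcal F. $$

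For the second step, set $M = \max_{f\in\mathcal F}\norm{f}_\infty$ and pick $N$ large enough that $k^2 M/N < \eps/2$. For any $n > N$, define $n_i = \lfloor n\alpha_i \rfloor$ for $i < k$ and $n_k = n - \sum_{i<k} n_i$; then each $n_i$ is a non-negative integer, $\sum_i n_i = n$, and $\abs{n_i/n - \alpha_i} \leq k/n$ for every $i$ (the first $k-1$ differences are bounded by $1/n$, and the last by $(k-1)/n$, using $\sum_i \alpha_i = 1$). Taking $x_1, \dots, x_n$ to be any list containing $n_i$ copies of $y_i$ (for each $i$), one obtains
$$ \abs{\frac{1}{n}\sum_{j=1}^n f(x_j) - \sum_{i=1}^k \alpha_i f(y_i)} \leq \sum_{i=1}^k \abs{\frac{n_i}{n} - \alpha_i}\cdot\abs{f(y_i)} \leq \frac{k^2 M}{n} < \eps/2 $$
for every $f\in\mathcal F$, and a triangle inequality with the estimate from the first step yields the claim.

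There is no serious obstacle here; the argument is soft, uses no structure on $X$ beyond compactness, and the only slightly fussy piece is the rounding step, which is a purely numerical matter and not the ``main'' content suggested by the lemma's reference to Krein--Milman.
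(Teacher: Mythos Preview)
Your argument is correct and is precisely the approach the paper has in mind: it states that the lemma ``is a direct consequence of the Krein--Milman Theorem'' and gives no further details, so your two-step proof (Krein--Milman approximation by a finite convex combination of Diracs, followed by rounding the weights to multiples of $1/n$) is exactly the intended fleshing-out.
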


\begin{thm}\label{trace-VA}
Assume 
\begin{equation}\label{cn-same}
\lim_{i\to\infty}\lim_{j\to\infty} (\frac{c_i}{n_i}) \cdots (\frac{c_{i+j}}{n_{i+j}}) = 1.
\end{equation}
Then the extreme points of $\mathrm{T}(A)$ are dense, i.e., $\mathrm{T}(A)$ is the Poulsen simplex if $X$ is not a singleton (\cite{Poulsen-Simplex}). 

The trace simplex of the simple Villadsen algebra $A_E$ with non-zero radius of comparison is the Poulsen simplex. 
\end{thm}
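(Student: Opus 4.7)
The plan is to prove the density of extreme points in $\mathrm T(A)$ under hypothesis \eqref{cn-same}, and then to deduce the claim for $A_E$ from it. The reduction is immediate: if $\mathrm{rc}(A_E) > 0$, Theorem \ref{cor-almost-1} yields condition \eqref{cn-same} for the parameters of $A_E$ (hence for $A$), while Lemma \ref{trace-int-1} provides an affine homeomorphism $\mathrm T(A_E) \cong \mathrm T(A)$. Since $\mathrm T(A)$ is metrizable (as $A$ is separable) and non-singleton when $X$ is not (by the remark preceding Lemma \ref{discrete}), density of the extreme points will identify $\mathrm T(A)$ with the Poulsen simplex.

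To prove density, I will approximate an arbitrary trace $\tau = (\mu_i) \in \mathrm T(A)$ by traces $\tau_y$ of the form defined earlier, which are extreme by Lemma \ref{extrem}. Fix a finite test set $\mathcal F \subseteq \mathrm{C}_{\mathbb R}(X^{c_1\cdots c_{i_0-1}})$ with $\|f\|_\infty \leq 1$, and $\epsilon > 0$; by density of the inductive union, it suffices to match $\mu_{i_0}$ on $\mathcal F$ up to $\epsilon$. For $j > i_0$ set $M := c_{i_0}\cdots c_{j-1}$; by Lemma \ref{discrete}, for $j$ large enough there exist $z_1,\dots,z_M \in X^{c_1\cdots c_{i_0-1}}$ with $|\mu_{i_0}(f) - \tfrac{1}{M}\sum_\alpha f(z_\alpha)| < \epsilon$ for every $f \in \mathcal F$. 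Put $y = (z_1,\dots,z_M) \in (X^{c_1\cdots c_{i_0-1}})^{c_{i_0}\cdots c_{j-1}} = X^{c_1\cdots c_{j-1}}$. Unrolling the composition, the $i_0$-th component of $\tau_y$ is
$$\theta_{i_0, j}(\delta_y) \;=\; \sum_{\alpha=1}^M \frac{s_\alpha}{n_{i_0}\cdots n_{j-1}}\,\delta_{z_\alpha},$$
where $s_\alpha$ is the product of the multiplicities $s_{k,\ell}$ collected along the $\alpha$-th branch of $\theta_{i_0}\circ\theta_{i_0+1}\circ\cdots\circ\theta_{j-1}$.

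It remains to compare this with $\tfrac{1}{M}\sum_\alpha \delta_{z_\alpha}$. Arguing as in the proof of Theorem \ref{cor-almost-1}, condition \eqref{cn-same} together with $\sum_\alpha s_\alpha = n_{i_0}\cdots n_{j-1}$ and $s_\alpha \geq 1$ force both $M/(n_{i_0}\cdots n_{j-1}) \to 1$ and the fraction of indices $\alpha$ with $s_\alpha > 1$ tending to zero as $j \to \infty$. A direct estimate then gives $\sum_\alpha |s_\alpha/(n_{i_0}\cdots n_{j-1}) - 1/M| \to 0$, so $\theta_{i_0, j}(\delta_y)$ and $\tfrac{1}{M}\sum_\alpha \delta_{z_\alpha}$ agree on $\mathcal F$ up to arbitrarily small error when $j$ is large; applying the contractive maps $\theta_{k, i_0}$ transfers the approximation down to stages $k < i_0$. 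Thus $\tau_y$ approximates $\tau$ on $\mathcal F$ within $3\epsilon$, as required. The main obstacle I anticipate is this combinatorial estimate: one must extract, from the asymptotic ratio hypothesis \eqref{cn-same} alone and while allowing arbitrary multiplicities $s_{k,\ell} \geq 1$ at each finite stage, the quantitative statement that most branches of $\theta_{i_0, j}$ have trivial multiplicity and that the total excess mass carried by the non-trivial branches is negligible, closely paralleling the bookkeeping already carried out in the proof of Theorem \ref{cor-almost-1}.
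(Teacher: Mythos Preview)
Your strategy is the paper's: approximate an arbitrary $\tau$ by the diagonal traces $\tau_y$, using Lemma \ref{discrete} to discretize $\mu_{i_0}$ and the combinatorics of Theorem \ref{cor-almost-1} to control the non-uniform branch weights; the reduction of the $A_E$ statement to the $A$ statement via Theorem \ref{cor-almost-1} and Lemma \ref{trace-int-1} is exactly as in the paper.

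There is, however, a quantifier slip that matters. You fix $i_0$ as the stage where $\mathcal F$ happens to live and then assert that, for that $i_0$, both $M/(n_{i_0}\cdots n_{j-1}) \to 1$ and the fraction of branches with $s_\alpha > 1$ tend to $0$ as $j\to\infty$. Hypothesis \eqref{cn-same} is only a \emph{double} limit $\lim_i\lim_j$, and neither conclusion need hold for a fixed small $i_0$. Take $c_1=2$, $s_{1,1}=1$, $s_{1,2}=2$ (so $n_1=3$), and $c_i=n_i$ with all $s_{i,\ell}=1$ for $i\geq 2$. Then \eqref{cn-same} holds, but for $i_0=1$ one has $M/(n_1\cdots n_{j-1}) = 2/3$ for every $j$, exactly half the branches carry $s_\alpha = 2$, and $\sum_\alpha \bigl|s_\alpha/(n_1\cdots n_{j-1}) - 1/M\bigr| = 1/3$ for all $j$. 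The fix is the paper's order of choices: first use \eqref{cn-same} (and the argument of Theorem \ref{cor-almost-1}) to choose $i_0$ so large that the inner limit and the multiplicity-one fraction are already within $\eps/3$ of $1$ for all $j$, and only \emph{then} invoke density of the inductive union to assume $\mathcal F$ lives at stage $i_0$. With the quantifiers in this order your $\ell^1$ estimate goes through and the remainder of the argument is correct.
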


\begin{proof}
It is enough to show that the traces $\tau_x$ are dense. Let $\mu$ be a tracial state of $A$, and represent it as $$\mu=(\mu_1, \mu_2, ... ),$$ where $\mu_i$ is a probability measure of $X^{c_1 \cdots c_{i-1}}$, and $\theta_i(\mu_{i+1}) = \mu_{i}$, $i=1, 2, ...$.

Let $N(\mathcal F; \eps)$ be a neighborhood of $\mu$ given by 
$$ \{ \tau\in \mathrm{T}(A): \abs{\mu(f) - \tau(f)} < \eps,\ f \in \mathcal F \},$$
where $\mathcal F \in A$ is a finite set and $\eps>0$, and let us show that $\tau_x \in N$ for some $x \in X^{c_1\cdots c_{i-1}}$, $i\in \mathbb N$. Then the first statement of the theorem follows from Lemma \ref{extrem}.

By \eqref{cn-same} and the proof of Theorem \ref{cor-almost-1}, there is $i_0>0$ such that for all $j>0$,
\begin{equation}\label{close-c-n}
1 - \frac{c_{i_0} \cdots c_{i_0+j}}{n_{i_0} \cdots n_{i_0+j}} < \frac{\eps}{3}
\end{equation}
and
\begin{equation}\label{small-2-m}
 \frac{\abs{\{s_k: s_k=1, k=1, ..., c_{i_0}\cdots c_{i_0+j}\}}}{n_{i_0}\cdots n_{i_0+j}} > 1-\frac{\eps}{3},
\end{equation}
where
$$\phi_{i_0, j+1} = \mathrm{diag}\{ \underbrace{\underbrace{\pi_1^*, ..., \pi_1^*}_{s_1}, ..., \underbrace{\pi_{c_{i_0}\cdots c_{i_0+j}}^*, ..., \pi_{c_{i_0}\cdots c_{i_0+j}}^*}_{s_{c_{i_0}\cdots c_{i_0+j}}}}_{n_{i_0}\cdots n_{i_0+j}}\}.$$

Without loss of generality, we may assume that $\mathcal F$ is in the unit ball of $\mathrm{M}_{n_0\cdots n_{i_0-1}}(\mathrm{C}(X^{c_1\cdots c_{i_0-1}}))$. Then consider the measure $\mu_{i_0}$. By Lemma \ref{discrete}, there is a large enough $j_0$ that $c_{i_0}\cdots c_{i_0+j_0}$ is large enough that there are $$x_1, ..., x_{c_{i_0}\cdots c_{i_0+j_0}} \in X^{c_1\cdots c_{i_0-1}}$$ satisfying
$$\abs{ \mu_{i_0}(f) - \frac{1}{c_{i_0} \cdots c_{i_0+j_0}}(f(x_1) + f(x_2) + \cdots + f(x_{c_{i_0}\cdots c_{i_0+j_0}}))} < \frac{\eps}{3}, \quad f\in \mathcal F.$$
Consider the point
$$x_\mu:= (x_1, x_2, ..., x_{c_{i_0}\cdots c_{i_0+j_0}}) \in (X^{c_1\cdots c_{i_0-1}})^{c_{i_0}\cdots c_{i_0+j_0}} = X^{c_1\cdots c_{i_0+j_0}},$$ and consider the trace $\tau_{x_\mu} \in \mathrm{T}(A)$. Then, for each $f\in\mathcal F$,
\begin{eqnarray*}
&& \abs{\mu(f) - \tau_{x_\mu}(f)} \\
& = & \abs{\mu_{i_0}(f) - \frac{1}{n_{i_0}\cdots n_{i_0+j_0}}(\underbrace{\delta_{x_1} + \cdots + \delta_{x_1}}_{s_1} + \cdots + \underbrace{\delta_{x_{c_{i_0}\cdots c_{i_0+j_0}}} + \cdots + \delta_{x_{c_{i_0}\cdots c_{i_0+j_0}}} }_{s_{c_{i_0}\cdots c_{i_0+j_0}}})(f)} \\
& < & \abs{\mu_{i_0}(f) - \frac{1}{n_{i_0}\cdots n_{i_0+j_0}}(\delta_{x_1} + \cdots + \delta_{x_{c_{i_0}\cdots c_{i_0+j_0}}})(f)} + \frac{\eps}{3} \quad\quad \textrm{(by \eqref{small-2-m})} \\
& < &  \abs{\mu_{i_0}(f) - \frac{1}{c_{i_0}\cdots c_{i_0+j_0}}(\delta_{x_1} + \cdots + \delta_{x_{c_{i_0}\cdots c_{i_0+j_0}}})(f)} + \frac{\eps}{3} + \frac{\eps}{3} \quad\quad\mathrm{(by \eqref{close-c-n})}  \\
& < & \eps,
\end{eqnarray*}
and this shows the first statement of the theorem.

Now, let $A_E$ be a simple Villadsen algebra with non-zero radius of comparison.  By Theorem \ref{cor-almost-1}, Equation \eqref{cn-same} holds (and $X$ is not a singleton), and therefore, $\mathrm{T}(A)$ is the Poulsen simplex. By Lemma \ref{trace-int-1}, $\mathrm{T}(A_E) \cong \mathrm{T}(A)$, and hence $\mathrm{T}(A_E)$ is the Poulsen simplex as well.
\end{proof}

\begin{rem}
Note that $\mathrm{T}(A_E)$ is always the Poulsen simplex whenever  \eqref{cn-same} holds and $X$ is not a singleton. This includes the case that $\mathrm{dim}(X) = 0$ (hence $\mathrm{rc}(A_E) = 0$).
\end{rem}

\begin{rem}
Note that the Giol-Kerr system, a dynamical analog of the Villadsen algebra,  is constructed as a small perturbation of the non-trivial two-sided shift (\cite{GK-Dyn}). It is worth comparing Theorem \ref{trace-VA} to the well-known fact that the simplex of invariant probability measures of the non-trivial two-sided shift is affinely homeomorphic to the Poulsen simplex (\cite{Sigmund-Poulsen}, \cite{Sigmund-Poulsen-2}).
\end{rem}

%\begin{rem}
%Does the Villadsen algebra have the Uniform Property $\Gamma$?
%\end{rem}

\subsection{An intertwining}

In what follows, we shall show (further to Lemma \ref{trace-int-1}) that for two point-evaluation sets, the intertwining maps between the trace simplices actually can be chosen to be induced by C*-algebra homomorphisms between building blocks, and in such a way that the resulting diagram of building blocks commutes (approximately) up to point evaluations.

Let there be given two different evaluation sets $$E_1, E_2, ..., E_i, ...\quad\mathrm{and}\quad F_1, F_2, ..., F_i, ..., $$ with sizes $(k^E_i)$ and $(k^F_i)$ respectively, and both satisfying Condition \eqref{rdg-cond} (with respect to the same $(n_i)$), and assume that, as supernatural numbers, 
\begin{equation}\label{K0-cond}
\prod_{i=1}^\infty (n_i+k_i^{(E)})=\prod_{i=1}^\infty (n_i+k_i^{(F)}),
\end{equation} 
and as real numbers, 
\begin{equation}\label{close-0}
\lim_{i\to\infty} \frac{ (n_1 + k_1^{(E)}) \cdots (n_{i} + k_{i}^{(E)}) }{ (n_1 + k_1^{(F)}) \cdots (n_{i} + k_{i}^{(F)}) } = 1.
\end{equation}

\begin{lem}\label{trace-int}
With the assumptions \eqref{K0-cond} and \eqref{close-0} above, let $A_E$ and $A_F$ denote the C*-algebras $A(X, (n_i), (k^{(E)}_i), E)$ and $A(X, (n_i), (k^{(F)}_i), F)$, respectively.
Let $\delta_1, \delta_2, ...$ be a decreasing sequence of strictly positive numbers with $$ \sum_{n=1}^\infty \delta_n < 1.$$
Then there is diagram
\begin{equation}\label{diag-0}
\xymatrix{
\mathrm{M}_{n_0}(\mathrm{C}(X)) \ar[r]^-{\phi_{1, i_1}^{(E)}} \ar[dr]^-{\phi_{1, i_1}^{(E, F)}} & \mathrm{M}_{m^{(E)}_{i_1}}(\mathrm{C}(X^{d_{i_1}})) \ar[r]^-{\phi_{i_1, i_2}^{(E)}}  & \mathrm{M}_{m^{(E)}_{i_2}}(\mathrm{C}(X^{d_{i_2}})) \ar[r] \ar[dr]^-{\phi_{i_2, i_3}^{(E, F)}} & \cdots \ar[r] & A_E \\
\mathrm{M}_{n_0}(\mathrm{C}(X)) \ar[r]^-{\phi_{1, i_1}^{(F)}}  & \mathrm{M}_{m^{(F)}_{i_1}}(\mathrm{C}(X^{d_{i_1}})) \ar[r]^-{\phi_{i_1, i_2}^{(F)}} \ar[ur]^-{\phi_{i_1, i_2}^{(F, E)}}  & \mathrm{M}_{m^{(F)}_{i_2}}(\mathrm{C}(X^{d_{i_2}})) \ar[r] & \cdots \ar[r] & A_F
}
\end{equation}
with
$$\abs{\tau(\phi^{(F, E)}_{i_{s+1}, i_{s+2}} \circ \phi^{(E, F)}_{i_s, i_{s+1}}(h) - \phi^{(E)}_{i_{s+1}, i_{s+2}} \circ \phi^{(E)}_{i_s, i_{s+1}}(h))} < \delta_s$$
for any $s=0, 2, ...$, any $h\in \mathrm{M}_{m^{(E)}_{i_s}}(\mathrm{C}(X^{d_{i_s}}))$ with $\norm{h}\leq 1$, and any $\tau\in\mathrm{T}(\mathrm{M}_{m^{(E)}_{i_{s+2}}}(\mathrm{C}(X^{d_{i_{s+2}}})))$, and, furthermore, 
$$\abs{\tau(\phi^{(E, F)}_{i_{s+1}, i_{s+2}} \circ \phi^{(F, E)}_{i_s, i_{s+1}}(h) - \phi^{(F)}_{i_{s+1}, i_{s+2}} \circ \phi^{(F)}_{i_s, i_{s+1}}(h))} < \delta_s$$
for any $s=1, 3, ...$, any $h\in \mathrm{M}_{m^{(F)}_{i_s}}(\mathrm{C}(X^{d_{i_s}}))$ with $\norm{h}\leq 1$, and any $\tau\in\mathrm{T}(\mathrm{M}_{m^{(F)}_{i_{s+2}}}(\mathrm{C}(X^{d_{i_{s+2}}})))$. 
%That is, the diagram \eqref{diag-0} is approximately commutative on the level of traces. Note that this implies $\mathrm{T}(A_E)$ and $\mathrm{T}(A_F)$ are homeomorphic. 
Moreover, for each $s = 0, 2, ... $,
$$\phi^{(F, E)}_{i_{s+1}, i_{s+2}} \circ \phi^{(E, F)}_{i_s, i_{s+1}} =\mathrm{diag}\{\pi_1^*, ..., \pi_{n_{i_s}\cdots n_{i_{s+1}}}^*, \textrm{point evaluations} \}, $$ 
and
for each $s = 1, 3, ...$,
$$\phi^{(E, F)}_{i_{s+1}, i_{s+2}} \circ \phi^{(F, E)}_{i_s, i_{s+1}} =\mathrm{diag}\{\pi_1^*, ..., \pi_{n_{i_s}\cdots n_{i_{s+1}}}^*, \textrm{point evaluations} \}.$$ 
%Moreover, the maps $\phi^{(F, E)}_{i_{s+1}, i_{s+2}} \circ \phi^{(E, F)}_{i_s, i_{s+1}}$ and $\phi^{(E)}_{i_{s+1}, i_{s+2}} \circ \phi^{(E)}_{i_s, i_{s+1}}$ share the same coordinate projection part, and so are the maps $\phi^{(E, F)}_{i_{s+1}, i_{s+2}} \circ \phi^{(F, E)}_{i_s, i_{s+1}}$ and $\phi^{(F)}_{i_{s+1}, i_{s+2}} \circ \phi^{(F)}_{i_s, i_{s+1}}$.
\end{lem}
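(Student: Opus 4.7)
The plan is to construct the subsequence $(i_s)$ and the cross-maps $\phi^{(E,F)}$, $\phi^{(F,E)}$ inductively, at each stage choosing $i_{s+1}$ large enough to satisfy three constraints simultaneously: divisibility of matrix sizes (so that a unital *-homomorphism $\mathrm{M}_{m_{i_s}^{(E)}}(\mathrm{C}(X^{d_{i_s}})) \to \mathrm{M}_{m_{i_{s+1}}^{(F)}}(\mathrm{C}(X^{d_{i_{s+1}}}))$ of the desired form exists), closeness of $m^{(F)}_{i_{s+1}}/m^{(E)}_{i_{s+1}}$ to $1$ (via \eqref{close-0}), and closeness to $1$ of the coordinate-projection proportion $\prod_{j=i_s}^{i_{s+1}-1} n_j/(n_j+k_j^{(E)})$ (via the $E$-side of \eqref{rdg-cond}). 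For even $s$, the supernatural-number hypothesis \eqref{K0-cond} provides arbitrarily large indices $i_{s+1}$ for which $m^{(F)}_{i_{s+1}}$ is an integer multiple of $m^{(E)}_{i_s}$. By enlarging $i_{s+1}$ further (and first requiring $i_s$ to be sufficiently large), the ratio $M_s := m^{(F)}_{i_{s+1}}/m^{(E)}_{i_s}$ will satisfy $M_s \geq n_{i_s}\cdots n_{i_{s+1}-1}$ and
$$1 - \frac{n_{i_s}\cdots n_{i_{s+1}-1}}{M_s} < \frac{\delta_s}{8}.$$
For odd $s$ the roles of $E$ and $F$ are swapped.

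I would then define $\phi^{(E,F)}_{i_s, i_{s+1}}$ (for even $s$) as the unital *-homomorphism with diagonal seed
$$f \mapsto \mathrm{diag}\bigl\{\underbrace{f\circ \pi_1, \ldots, f\circ \pi_{c_{i_s}\cdots c_{i_{s+1}-1}}}_{n_{i_s}\cdots n_{i_{s+1}-1}},\, f(y_1), \ldots, f(y_{N_s})\bigr\},$$
using the same coordinate projections and multiplicities as the native $\phi^{(E)}_{i_s, i_{s+1}}$, and filling the remaining $N_s = M_s - n_{i_s}\cdots n_{i_{s+1}-1}$ slots with point evaluations at freely chosen points of $X^{d_{i_s}}$. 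Define $\phi^{(F,E)}_{i_s, i_{s+1}}$ analogously for odd $s$. Since each cross-map has this diagonal structure, the compositions $\phi^{(F,E)}_{i_{s+1}, i_{s+2}} \circ \phi^{(E,F)}_{i_s, i_{s+1}}$ and $\phi^{(E,F)}_{i_{s+1}, i_{s+2}} \circ \phi^{(F,E)}_{i_s, i_{s+1}}$ are automatically again diagonal sums of composed coordinate-projection pullbacks from $X^{d_{i_{s+2}}}$ to $X^{d_{i_s}}$ together with point evaluations; this yields the structural identities at the end of the statement.

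For the trace estimate, the argument mirrors the proof of Lemma \ref{trace-int-1}. On any $h \in \mathrm{M}_{m^{(E)}_{i_s}}(\mathrm{C}(X^{d_{i_s}}))$ with $\|h\| \leq 1$, both $\phi^{(F,E)}_{i_{s+1}, i_{s+2}} \circ \phi^{(E,F)}_{i_s, i_{s+1}}(h)$ and $\phi^{(E)}_{i_{s+1}, i_{s+2}} \circ \phi^{(E)}_{i_s, i_{s+1}}(h)$ are diagonal with the same coordinate-projection pullbacks, differing only in total multiplicity and in which points evaluate $h$. Applying any tracial state and bounding the point-evaluation contributions by $\|h\| \leq 1$, the difference is controlled by twice $1 - (\text{coordinate-projection count})/(\text{total multiplicity})$ in each map, which by Step 1 is at most $\delta_s$.

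The main obstacle is jointly satisfying the three constraints on $i_{s+1}$: divisibility of matrix sizes does not by itself deliver closeness of multiplicity ratios, and neither closeness condition produces divisibility. The resolution is sequential: first use \eqref{rdg-cond} and \eqref{close-0} to determine a threshold beyond which both closeness conditions are ensured, then invoke \eqref{K0-cond} to find, beyond this threshold, an index at which divisibility also holds; since supernatural-number divisibility is available arbitrarily far out, this combination is always possible. A minor additional point is to choose the inserted point evaluations from a dense sequence in $X^{d_{i_s}}$, preserving the density hypothesis of the Villadsen construction recalled in Section \ref{Vill-construction}.
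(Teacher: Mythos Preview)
Your overall plan is close to the paper's, and the ingredients you identify (divisibility from \eqref{K0-cond}, the ratio condition \eqref{close-0}, and condition \eqref{rdg-cond}) are exactly the right ones. However, there is a genuine gap in the trace estimate for the first triangle ($s=0$). There the composite $\phi^{(F,E)}_{i_1,i_2}\circ\phi^{(E,F)}_{1,i_1}$ and the native map $\phi^{(E)}_{1,i_2}$ are both unital maps out of the \emph{initial} building block $\mathrm{M}_{n_0}(\mathrm{C}(X))$, so the quantity you bound by ``twice $1-(\text{coord.\ proj.\ count})/(\text{total multiplicity})$'' equals
\[
2\Bigl(1-\prod_{j=1}^{i_2-1}\frac{n_j}{n_j+k_j^{(E)}}\Bigr)\ \geq\ \frac{2k_1^{(E)}}{n_1+k_1^{(E)}}\ >\ 0,
\]
independently of how large $i_1,i_2$ are chosen. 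Your parenthetical ``first requiring $i_s$ to be sufficiently large'' cannot be invoked here, since $i_0$ is pinned to the first stage by the diagram. For small $\delta_1$ your bound therefore does not deliver the required estimate.

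The paper circumvents this by building each cross-map as a composite through an intermediate \emph{native} stage on the source side: one first picks $i_1'$ large (so that \eqref{rdg-cond} makes $\prod_{j\geq i_1'}n_j/(n_j+k_j^{(E)})$ within $\delta_1$ of $1$), then chooses $i_1>i_1'$ and sets $\phi^{(E,F)}_{1,i_1}:=\tilde\phi^{(E,F)}_{i_1',i_1}\circ\phi^{(E)}_{1,i_1'}$, where only the short piece $\tilde\phi^{(E,F)}_{i_1',i_1}$ is the freshly constructed diagonal of coordinate projections plus arbitrary point evaluations. Now $\phi^{(E,F)}_{1,i_1}$ and $\phi^{(E)}_{1,i_1}$ share the prefix $\phi^{(E)}_{1,i_1'}$ (in particular all the $E$-point evaluations below level $i_1'$), so the trace difference at $s=0$ is governed by the point-evaluation proportion from stage $i_1'$ onward, which \emph{is} small. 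The existence condition you need, namely that the target matrix size accommodates $n_{i_1'}\cdots n_{i_1-1}$ coordinate projections, is precisely inequality \eqref{close-1}; this is verified in Lemma~\ref{auto-close} from \eqref{close-0} together with $k^{(E)}_{i_1'}>0$. The same factoring trick is repeated at every subsequent step, with the role of $i_s'$ played by a fresh intermediate index between $i_{s-1}$ and $i_s$.
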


\begin{defn}\label{close-size}
The sequences $(k_i^{(E)})$ and $(k_i^{(F)})$ will be said to be sufficiently close if 
for any $\delta>0$, there is an arbitrarily large pair $i_1>i'_1$ such that 
\begin{equation*}
1 - \prod_{j=0}^\infty \frac{n_{i'_1+j}}{n_{i'_1+j}+k^{(E)}_{i'_1+j}} < \delta,
\end{equation*}
\begin{equation*}
\textrm{$\prod_{i=1}^{i_1-1}(n_i+k_{i}^{(F)})$ is divisible by $\prod_{i=1}^{i'_1-1}(n_i+k_{i}^{(E)})$},
\end{equation*} 
and
\begin{equation}\label{close-1}
\frac{(n_1 + k_1^{(F)}) \cdots (n_{i_1'-1} + k_{i_1'-1}^{(F)})}{(n_1 + k_1^{(E)}) \cdots (n_{i_1'-1} + k_{i_1'-1}^{(E)})} \cdot \frac{(n_{i_1'} + k_{i_1'}^{(F)})\cdots (n_{i_1 - 1} + k_{i_1-1}^{(F)})}{n_{i_1'}\cdots n_{i_1-1}} > 1,
\end{equation}
and, furthermore, there are arbitrarily large $i_2>i'_2$ such that 
\begin{equation*}
1 - \prod_{j=0}^\infty \frac{n_{i'_2+j}}{n_{i'_2+j}+k^{(F)}_{i'_2+j}} < \delta,
\end{equation*}
\begin{equation*}
\textrm{$\prod_{i=1}^{i_2-1}(n_i+k_{i}^{(E)})$ is divisible by $\prod_{i=1}^{i'_2-1}(n_i+k_{i}^{(F)})$},
\end{equation*} 
and
\begin{equation}\label{close-2}
\frac{(n_1 + k_1^{(E)}) \cdots (n_{i_2'-1} + k_{i_2'-1}^{(E)})}{(n_1 + k_1^{(F)}) \cdots (n_{i_2'-1} + k_{i_2'-1}^{(F)})} \cdot \frac{(n_{i_2'} + k_{i_2'}^{(E)})\cdots (n_{i_2 - 1} + k_{i_2-1}^{(E)})}{n_{i_2'}\cdots n_{i_2-1}} > 1.
\end{equation}

\end{defn}

\begin{lem}\label{auto-close}
Under the assumptions \eqref{rdg-cond}, \eqref{K0-cond}, and \eqref{close-0}, the sequences $(k_i^{(E)})$ and $(k_i^{(F)})$ are sufficiently close. %  If 
%\begin{equation}\label{close-0}
%\lim_{i\to\infty} \frac{ (n_1 + k_1^{(E)}) \cdots (n_{i_1'-1} + k_{i}^{(E)}) }{ (n_1 + k_1^{(F)}) \cdots (n_{i_1'-1} + k_{i}^{(F)}) } = 1,
%\end{equation}
%then \eqref{close-1} and \eqref{close-2} automatically hold. 
\end{lem}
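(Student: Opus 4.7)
My plan is to verify, for any $\delta > 0$ and any prescribed lower bound, the existence of a pair $i_1 > i_1'$ fulfilling all three conditions of Definition \ref{close-size}; the analogous pair $i_2 > i_2'$ then follows by swapping the roles of $E$ and $F$. The first bullet point of Definition \ref{close-size} is immediate from \eqref{rdg-cond}: choose $i_1'$ arbitrarily large so that $1 - \prod_{j\geq 0} n_{i_1'+j}/(n_{i_1'+j}+k_{i_1'+j}^{(E)}) < \delta$.

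With $i_1'$ fixed, the divisibility condition comes from \eqref{K0-cond}. The integer $\prod_{i<i_1'}(n_i+k_i^{(E)})$ has only finitely many prime divisors and a finite $p$-adic valuation at each; since the supernatural numbers $\prod(n_i+k_i^{(E)})$ and $\prod(n_i+k_i^{(F)})$ agree, for each such prime $p$ the valuation $v_p\bigl(\prod_{i<i_1}(n_i+k_i^{(F)})\bigr)$ can be made at least $v_p\bigl(\prod_{i<i_1'}(n_i+k_i^{(E)})\bigr)$ by taking $i_1$ sufficiently large. A single $i_1$ large enough handles all the (finitely many) relevant primes at once, giving the required divisibility.

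The main work is arranging the strict inequality \eqref{close-1}. Abbreviate $\alpha_{i'}^{(E)} := \prod_{j\geq 0} n_{i'+j}/(n_{i'+j}+k_{i'+j}^{(E)})$ and $\alpha_{i'}^{(F)}$ analogously. The second factor of \eqref{close-1} is $\prod_{i=i_1'}^{i_1-1}(n_i+k_i^{(F)})/n_i$, which is monotone non-decreasing in $i_1$ with limit $1/\alpha_{i_1'}^{(F)}$. A short telescoping argument using \eqref{close-0} yields the identity
$$\frac{\prod_{i<i_1'}(n_i+k_i^{(F)})}{\prod_{i<i_1'}(n_i+k_i^{(E)})} \;=\; \frac{\alpha_{i_1'}^{(F)}}{\alpha_{i_1'}^{(E)}},$$
obtained by writing the \eqref{close-0} ratio at stage $i_1$ as the stage-$i_1'$ ratio times a trailing product over $i_1' \le i < i_1$ and letting $i_1 \to \infty$. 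Consequently, the entire left-hand side of \eqref{close-1} tends to $1/\alpha_{i_1'}^{(E)}$ as $i_1 \to \infty$.

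The principal obstacle is the borderline case $\alpha_{i_1'}^{(E)}=1$, which is equivalent to $k_i^{(E)}=0$ for every $i \geq i_1'$ and makes the limit above exactly $1$, so the strict inequality cannot be obtained merely by increasing $i_1$. In the generic setting relevant for Villadsen algebras with non-zero radius of comparison, infinitely many $k_i^{(E)}$ are non-zero, hence $\alpha_{i_1'}^{(E)} < 1$ for every $i_1'$, and one can choose $i_1$ large enough to secure \eqref{close-1} together with the divisibility of the preceding paragraph; the pair $(i_2', i_2)$ is then produced by the symmetric argument with $E$ and $F$ exchanged. The degenerate case where only finitely many $k_i^{(E)}$ are non-zero is handled by direct inspection: \eqref{close-0} forces $(k_i^{(F)})$ to be compatible with $(k_i^{(E)})$ past a finite stage, so either one chooses $i_1'$ just inside the non-trivial support of $(k_i^{(E)})$, or the two algebras essentially agree and the required pair exists trivially.
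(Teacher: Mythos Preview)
Your argument is correct and follows essentially the same path as the paper's: fix $i_1'$ via \eqref{rdg-cond}, obtain divisibility from \eqref{K0-cond}, and then show the left-hand side of \eqref{close-1} exceeds $1$ for large $i_1$. The paper's algebra is marginally more direct than yours: it rewrites the expression as
\[
\frac{\prod_{i<i_1}(n_i + k_i^{(F)})}{\prod_{i<i_1}(n_i + k_i^{(E)})} \cdot \frac{\prod_{i=i_1'}^{i_1-1}(n_i + k_i^{(E)})}{\prod_{i=i_1'}^{i_1-1} n_i},
\]
notes that the first factor tends to $1$ by \eqref{close-0}, and bounds the second below by the fixed constant $(n_{i_1'}+k_{i_1'}^{(E)})/n_{i_1'}>1$; your telescoping identity arrives at the same limit $1/\alpha_{i_1'}^{(E)}$ through one extra step. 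Your concern about the ``borderline case'' $\alpha_{i_1'}^{(E)}=1$ is unnecessary: in Section~\ref{Vill-construction} the sequences $(k_i)$ are taken to be \emph{non-zero} natural numbers, so $k_{i_1'}^{(E)}\geq 1$ and hence $\alpha_{i_1'}^{(E)}<1$ for every choice of $i_1'$; the degenerate-case paragraph can simply be dropped.
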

\begin{proof}
We only have to show  \eqref{close-1} and \eqref{close-2}.
For the given $\delta>0$, choose $i_1'$ sufficiently large that 
\begin{equation*}
1 - \prod_{j=0}^\infty \frac{n_{i'_1+j}}{n_{i'_1+j}+k^{(E)}_{i'_1+j}} < \delta.
\end{equation*}

Then, with sufficiently large $i_1$, by \eqref{close-0}, we have
\begin{eqnarray*}
& & \frac{(n_1 + k_1^{(F)}) \cdots (n_{i_1'-1} + k_{i_1'-1}^{(F)})}{(n_1 + k_1^{(E)}) \cdots (n_{i_1'-1} + k_{i_1'-1}^{(E)})} \cdot \frac{(n_{i_1'} + k_{i_1'}^{(F)})\cdots (n_{i_1 - 1} + k_{i_1-1}^{(F)})}{n_{i_1'}\cdots n_{i_1-1}} \\
& = & \frac{(n_1 + k_1^{(F)}) \cdots (n_{i_1-1} + k_{i_1-1}^{(F)})}{(n_1 + k_1^{(E)}) \cdots (n_{i_1-1} + k_{i_1-1}^{(E)})} \cdot \frac{(n_{i_1'} + k_{i'_1}^{(E)})\cdots (n_{i_1 - 1} + k_{i_1-1}^{(E)})}{n_{i_1'}\cdots n_{i_1-1}} \\
& > & \frac{(n_1 + k_1^{(F)}) \cdots (n_{i_1-1} + k_{i_1-1}^{(F)})}{(n_1 + k_1^{(E)}) \cdots (n_{i_1-1} + k_{i_1-1}^{(E)})} \cdot \frac{n_{i_1'} + k_{i'_1}^{(E)}}{n_{i_1'}} >1.
\end{eqnarray*}
So \eqref{close-1} holds. A similar argument shows that \eqref{close-2} holds. 
\end{proof}

\begin{proof}[Proof of Lemma \ref{trace-int}]
Consider the inductive limit decompositions
$$
\xymatrix{
\mathrm{M}_{n_0}(\mathrm{C}(X)) \ar[r]^-{\phi_{1}^{(E)}} & \mathrm{M}_{n_0(n_1+k^{(E)}_1)}(\mathrm{C}(X^{c_1})) \ar[r]^-{\phi_{2}^{(E)}}  & \mathrm{M}_{n_0(n_1+k^{(E)}_1)(n_2+k^{(E)}_2)}(\mathrm{C}(X^{c_1c_2})) \ar[r] & \cdots \ar[r] & A_E, \\
\mathrm{M}_{n_0}(\mathrm{C}(X)) \ar[r]^-{\phi_{1}^{(F)}}  & \mathrm{M}_{n_0(n_1+k^{(F)}_1)}(\mathrm{C}(X^{c_1})) \ar[r]^-{\phi_{2}^{(F)}}  & \mathrm{M}_{n_0(n_1+k^{(F)}_1)(n_2+k^{(F)}_2)}(\mathrm{C}(X^{c_1c_2})) \ar[r] & \cdots \ar[r] & A_F.
}
$$

%Consider the subset $\mathcal H^{(E)}_1$ of $\mathrm{M}_{n_0}(\mathrm{C}(X))$. 
Since the sequences $(k_i^{(E)})$ and $(k_i^{(F)})$ are sufficiently close (Lemma \ref{auto-close}), there is a pair $i'_1<i_1$ sufficiently large that
\begin{equation}\label{small-E-1}
1 - \prod_{j=0}^\infty \frac{n_{i'_1+j}}{n_{i'_1+j}+k^{(E)}_{i'_1+j}} < \delta_1,
\end{equation}
\begin{equation}\label{divide-cond}
\textrm{$\prod_{i=1}^{i_1-1}(n_i+k_{i}^{(F)})$ is divisible by $\prod_{i=1}^{i'_1-1}(n_i+k_{i}^{(E)})$},
\end{equation} 
and 
\begin{equation}\label{enough-room}
\frac{(n_1 + k_1^{(F)}) \cdots (n_{i_1'-1} + k_{i_1'-1}^{(F)})}{(n_1 + k_1^{(E)}) \cdots (n_{i_1'-1} + k_{i_1'-1}^{(E)})} \cdot \frac{(n_{i_1'} + k_{i_1'}^{(F)})\cdots (n_{i_1 - 1} + k_{i_1-1}^{(F)})}{n_{i_1'}\cdots n_{i_1-1}} > 1.
\end{equation}

Then consider the diagram 
$$
\xymatrix{
\mathrm{M}_{n_0}(\mathrm{C}(X)) \ar[r]^-{\phi_{1, i'_1}^{(E)}}  & \mathrm{M}_{m^{(E)}_{i'_1}}(\mathrm{C}(X^{d_{i'_1}})) \ar[r]^-{\phi_{i'_1, i_1}^{(E)}} \ar[dr]^-{\tilde{\phi}_{i'_1, i_1}^{(E, F)}}  & \mathrm{M}_{m^{(E)}_{i_1}}(\mathrm{C}(X^{d_{i_1}})) \ar[r] & \cdots \ar[r] & A_E \\
\mathrm{M}_{n_0}(\mathrm{C}(X)) \ar[r]^-{\phi_{1, i'_1}^{(F)}}  & \mathrm{M}_{m^{(F)}_{i'_1}}(\mathrm{C}(X^{d_{i'_1}})) \ar[r]^-{\phi_{i'_1, i_1}^{(F)}}  & \mathrm{M}_{m^{(F)}_{i_1}}(\mathrm{C}(X^{d_{i_1}})) \ar[r] & \cdots \ar[r] & A_F,
}
$$
where 
\begin{equation}\label{m-and-d}
m_i:=n_0(n_1+k_1)\cdots (n_{i-1}+k_{i-1}), \quad d_i:=c_1\cdots c_{i-1},
\end{equation}
and
$\tilde{\phi}_{i'_1, i_1}^{(E, F)}: \mathrm{M}_{m^{(E)}_{i'_1}}(\mathrm{C}(X^{d_{i'_1}})) \to \mathrm{M}_{m^{(F)}_{i_1}}(\mathrm{C}(X^{d_{i_1}}))$ is a map
$$f \mapsto \mathrm{diag}\{\underbrace{f\circ \pi_1, ..., f\circ\pi_{c_{i_1'} \cdots c_{i_1-1}}}_{n_{i_1'} \cdots n_{i_1-1}}, \textrm{point evaluations}\},$$ where the point evaluations are arbitrarily chosen (by \eqref{divide-cond} and \eqref{enough-room}, there is enough room for the map $\tilde{\phi}_{i'_1, i_1}^{(E, F)}$ to exist) (the evaluation points chosen for the map $\tilde{\phi}_{i'_1, i_1}^{(E, F)}$ might not be suitably dense in $X^{d_{i'_1}}$, but the set of evaluation points of the map  $\tilde{\phi}_{i'_1, i_1}^{(E, F)} \circ \phi^{(E)}_{1, i_1'} $, which contains the set of evaluation points of the map $\phi^{(E)}_{1, i_1'}$, is suitably dense as $i_1'\to\infty$).

Write $$\phi_{1, i_1}^{(E, F)}=\tilde{\phi}_{i'_1, i_1}^{(E, F)} \circ \phi_{1, i'_1}^{(E)},$$ and compress the diagram above as
$$
\xymatrix{
\mathrm{M}_{n_0}(\mathrm{C}(X)) \ar[r]^-{\phi_{1, i_1}^{(E)}} \ar[dr]^{\phi_{1, i_1}^{(E, F)}} & \mathrm{M}_{m^{(E)}_{i_1}}(\mathrm{C}(X^{d_{i_1}})) \ar[r]^-{\phi_{i_1}^{(E)}}  & \mathrm{M}_{m^{(E)}_{i_1+1}}(\mathrm{C}(X^{d_{i_1+1}})) \ar[r] & \cdots \ar[r] & A_E \\
\mathrm{M}_{n_0}(\mathrm{C}(X)) \ar[r]^-{\phi_{1, i_1}^{(F)}}  & \mathrm{M}_{m^{(F)}_{i_1}}(\mathrm{C}(X^{d_{i_1}})) \ar[r]^-{\phi_{i_1}^{(F)}}  & \mathrm{M}_{m^{(F)}_{i_1+1}}(\mathrm{C}(X^{d_{i_1+1}})) \ar[r] & \cdots \ar[r] & A_F.
}
$$

%Now, consider the set $\mathcal H_{i_1}^{(F)}$. 
There are $i'_2< i_2$ sufficiently large that 
$$1 - \prod_{j=0}^\infty \frac{n_{i'_2+j}}{n_{i'_2+j}+k^{(F)}_{i'_2+j}} < \delta_2,$$ 
\begin{equation*}
\textrm{$\prod_{i=1}^{i_2-1}(n_i+k_{i}^{(E)})$ is divisible by $\prod_{i=1}^{i'_2-1}(n_i+k_{i}^{(F)})$}
\end{equation*} 
and
\begin{equation}
\frac{(n_1 + k_1^{(E)}) \cdots (n_{i_2'-1} + k_{i_2'-1}^{(E)})}{(n_1 + k_1^{(F)}) \cdots (n_{i_2'-1} + k_{i_2'-1}^{(F)})} \cdot \frac{(n_{i_2'} + k_{i_2'}^{(E)})\cdots (n_{i_2 - 1} + k_{i_2-1}^{(E)})}{n_{i_2'}\cdots n_{i_2-1}} > 1.
\end{equation}
In the same way as above, one obtains a unital homomorphism $$\tilde{\phi}_{i'_2, i_2}^{(F, E)}: \mathrm{M}_{m_{i'_2}^{(F)}}(\mathrm{C}(X^{d_{i'_2}})) \to \mathrm{M}_{m_{i_2}^{(E)}}(\mathrm{C}(X^{d_{i_2}})) $$
$$f \mapsto \mathrm{diag}\{\underbrace{f\circ \pi_1, ..., f\circ\pi_{c_{i_2'} \cdots c_{i_2-1}}}_{n_{i_2'} \cdots n_{i_2-1}}, \textrm{point evaluations}\},$$ such that, with
$$\phi_{i_1, i_2}^{(F, E)}=\tilde{\phi}_{i'_2, i_2}^{(F, E)} \circ \phi_{i_1, i'_2}^{(F)},$$
and compressing, we have the augmented diagram 
$$
\xymatrix{
\mathrm{M}_{n_0}(\mathrm{C}(X)) \ar[r]^-{\phi_{1, i_1}^{(E)}} \ar[dr]^-{\phi_{1, i_1}^{(E, F)}} & \mathrm{M}_{m^{(E)}_{i_1}}(\mathrm{C}(X^{d_{i_1}})) \ar[r]^-{\phi_{i_1, i_2}^{(E)}}  & \mathrm{M}_{m^{(E)}_{i_2}}(\mathrm{C}(X^{d_{i_2}})) \ar[r] & \cdots \ar[r] & A_E \\
\mathrm{M}_{n_0}(\mathrm{C}(X)) \ar[r]^-{\phi_{1, i_1}^{(F)}}  & \mathrm{M}_{m^{(F)}_{i_1}}(\mathrm{C}(X^{d_{i_1}})) \ar[r]^-{\phi_{i_1, i_2}^{(F)}} \ar[ur]^-{\phi_{i_1, i_2}^{(F, E)}}  & \mathrm{M}_{m^{(F)}_{i_2}}(\mathrm{C}(X^{d_{i_2}})) \ar[r] & \cdots \ar[r] & A_F.
}
$$
Note that, by \eqref{small-E-1},
$$\abs{\tau(\phi^{(F, E)}_{i_{1}, i_{2}} \circ \phi^{(E, F)}_{1, i_{1}}(h) - \phi^{(E)}_{i_{1}, i_{2}} \circ \phi^{(E)}_{1, i_1}(h))} < \delta_1,\quad h\in \mathrm{C}(X),$$ and
$\phi^{(F, E)}_{i_{1}, i_{2}} \circ \phi^{(E, F)}_{1, i_{1}}$ is a map  
$$f \mapsto \mathrm{diag}(\underbrace{f\circ \pi_1, ..., f\circ\pi_{c_{i_1} \cdots c_{i_2-1}}}_{n_{i_1} \cdots n_{i_2-1}}, \textrm{point evaluations}).$$

Repeating this process, we have $i_1 < i_2 < \cdots $ with
\begin{equation}\label{small-trace-0}
1 - \prod_{j=0}^\infty \frac{n_{i_s+j}}{n_{i_s+j}+k^{(E)}_{i_s+j}} < \delta_s\quad\mathrm{and}\quad 1 - \prod_{j=0}^\infty \frac{n_{i_s+j}}{n_{i_s+j}+k^{(F)}_{i_s+j}} < \delta_s,\quad s=1, 2, ...,
\end{equation}
and the infinite intertwining diagram
\begin{equation*}
\xymatrix{
\mathrm{M}_{n_0}(\mathrm{C}(X)) \ar[r]^-{\phi_{1, i_1}^{(E)}} \ar[dr]^-{\phi_{1, i_1}^{(E, F)}} & \mathrm{M}_{m^{(E)}_{i_1}}(\mathrm{C}(X^{d_{i_1}})) \ar[r]^-{\phi_{i_1, i_2}^{(E)}}  & \mathrm{M}_{m^{(E)}_{i_2}}(\mathrm{C}(X^{d_{i_2}})) \ar[r] \ar[dr]^-{\phi_{i_2, i_3}^{(E, F)}} & \cdots \ar[r] & A_E \\
\mathrm{M}_{n_0}(\mathrm{C}(X)) \ar[r]^-{\phi_{1, i_1}^{(F)}}  & \mathrm{M}_{m^{(F)}_{i_1}}(\mathrm{C}(X^{d_{i_1}})) \ar[r]^-{\phi_{i_1, i_2}^{(F)}} \ar[ur]^-{\phi_{i_1, i_2}^{(F, E)}}  & \mathrm{M}_{m^{(F)}_{i_2}}(\mathrm{C}(X^{d_{i_2}})) \ar[r] & \cdots \ar[r] & A_F.
}
\end{equation*}

The diagram \eqref{diag-0} is not commutative. But, by \eqref{small-trace-0}, we have
$$\abs{\tau(\phi^{(F, E)}_{i_{s+1}, i_{s+2}} \circ \phi^{(E, F)}_{i_s, i_{s+1}}(h) - \phi^{(E)}_{i_{s+1}, i_{s+2}} \circ \phi^{(E)}_{i_s, i_{s+1}}(h))} < \delta_s$$
for any $s=0, 2, ...$, any $h\in \mathrm{M}_{m^{(E)}_{i_s}}(\mathrm{C}(X^{d_{i_s}}))$ with $\norm{h}\leq 1$, and any $\tau\in\mathrm{T}(\mathrm{M}_{m^{(E)}_{i_{s+2}}}(\mathrm{C}(X^{d_{i_{s+2}}})))$; and
$$\abs{\tau(\phi^{(E, F)}_{i_{s+1}, i_{s+2}} \circ \phi^{(F, E)}_{i_s, i_{s+1}}(h) - \phi^{(F)}_{i_{s+1}, i_{s+2}} \circ \phi^{(F)}_{i_s, i_{s+1}}(h))} < \delta_s$$
for any $s=1, 3, ...$, any $h\in \mathrm{M}_{m^{(F)}_{i_s}}(\mathrm{C}(X^{d_{i_s}}))$ with $\norm{h} \leq 1$, and any $\tau\in\mathrm{T}(\mathrm{M}_{m^{(F)}_{i_{s+2}}}(\mathrm{C}(X^{d_{i_{s+2}}})))$. That is, the diagram \eqref{diag-0} is approximately commutative at the level of traces. Note that this implies that the simplices $\mathrm{T}(A_E)$ and $\mathrm{T}(A_F)$ are isomorphic. Moreover, the maps $\phi^{(F, E)}_{i_{s+1}, i_{s+2}} \circ \phi^{(E, F)}_{i_s, i_{s+1}}$ and $\phi^{(E)}_{i_{s+1}, i_{s+2}} \circ \phi^{(E)}_{i_s, i_{s+1}}$ share the same coordinate projection part, and so also do the maps $\phi^{(E, F)}_{i_{s+1}, i_{s+2}} \circ \phi^{(F, E)}_{i_s, i_{s+1}}$ and $\phi^{(F)}_{i_{s+1}, i_{s+2}} \circ \phi^{(F)}_{i_s, i_{s+1}}$.
\end{proof}

\begin{rem}
A direct consequence of \eqref{diag-0} is that the trace simplex of $A_E$ is homeomorphic to that of $A_F$.
In the case of a Goodearl algebra (i.e., $c_i=1$, $i=1, 2, ...$), the trace simplex is homeomorphic to the Bauer simplex with extreme boundary $X$, while, as we have shown, in the case of the Villadsen algebra (i.e., $s_{i, j}=1$, $i=1, 2, ...,$ $j=1, ..., c_i$),  the trace simplex is the Poulsen simplex.
\end{rem}

\section{A uniqueness theorem}

\begin{thm}\label{unique}
Let $X$ be a connected metrizable compact space, and let $\Delta: \mathrm{C}^+_1(X) \to (0, 1]$ be an order preserving map. Then, for any finite set $\mathcal F\subseteq \mathrm{C}(X)$ and any $\eps>0$, there exist finite sets $\mathcal H_0, \mathcal H_1\subseteq \mathrm{C}^+(X)$ and $\delta>0$ such that for any unital homomorphisms $\phi_0, \phi_1: \mathrm{C}(X) \to \mathrm{M}_{n+k}(\mathrm{C}(X^d))$ with
$$\phi_0(f) = \mathrm{diag}\{f\circ\pi_1, ..., f\circ\pi_n, f(x_{1}), ..., f(x_{k})\}$$
and
$$\phi_1(f) = \mathrm{diag}\{f\circ\pi_1, ..., f\circ\pi_n, f(y_{1}), ..., f(y_{k})\},$$
where $x_1, ..., x_k$ and $y_1, ..., y_k$ are points of $X$ and $\pi_1, ..., \pi_n$ are coordinate projections (possibly with multiplicity), if 
$$\tau(\phi_0(h)), \tau(\phi_1(h)) > \Delta(h),\quad h \in \mathcal H_0, $$
and
$$\abs{\tau( \phi_0(h) - \phi_1(h))} < \delta,\quad h\in\mathcal H_1,\ \tau\in\mathrm{T}(\mathrm{M}_{n+k}(\mathrm{C}(X^d))),$$
then there is a unitary $u \in \mathrm{M}_{n+k}(\mathrm{C}(X^d))$ such that 
$$\norm{\phi_0(f) - u^*\phi_1(f)u} < \eps,\quad f\in\mathcal{F}.$$
\end{thm}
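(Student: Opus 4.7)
The key structural observation is that $\phi_0$ and $\phi_1$ share the identical coordinate-projection block, so write $\phi_i=\psi\oplus\chi_i$ with $\psi(f)=\mathrm{diag}\{f\circ\pi_1,\ldots,f\circ\pi_n\}$ common and
$$\chi_0(f)=\mathrm{diag}\{f(x_1),\ldots,f(x_k)\},\qquad\chi_1(f)=\mathrm{diag}\{f(y_1),\ldots,f(y_k)\},$$
the latter two being \emph{constant} maps $\mathrm{C}(X)\to\mathrm{M}_k\subseteq\mathrm{M}_k(\mathrm{C}(X^d))$. It then suffices to produce a permutation $\sigma\in S_k$ such that $\max_j d_{\mathcal F}(x_j,y_{\sigma(j)})<\eps$, where $d_{\mathcal F}(x,y):=\max_{f\in\mathcal F}|f(x)-f(y)|$; then the induced permutation unitary $v\in\mathrm{M}_k$ and $u:=1_n\oplus v$ satisfy $\|\phi_0(f)-u\phi_1(f)u^*\|=\max_j|f(x_j)-f(y_{\sigma(j)})|<\eps$ for every $f\in\mathcal F$.

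\textbf{Step 1 (use of $\mathcal H_0$; lower density bounds).} Reducing at once to $|X|\geq 2$, fix a finite open cover $\{U_l\}_{l=1}^L$ of $X$ of $d_{\mathcal F}$-diameter $<\eps/4$, and for each $l$ a bump $h_l\in\mathrm C_1^+(X)$ with $h_l\equiv 1$ on $U_l$ and $\mathrm{supp}(h_l)\subsetneq X$; add a further bump $h_*\in\mathrm C_1^+(X)$ vanishing at some $x_*\in X$. Place these in $\mathcal H_0$. For each $h\in\mathcal H_0$ pick $p\in X\setminus\mathrm{supp}(h)$ and evaluate the hypothesis at the extreme trace $\mathrm{tr}_{n+k}\circ\mathrm{ev}_z$ with $z=(p,\ldots,p)\in X^d$: the coordinate-projection contributions all vanish, giving
\begin{equation*}
\frac{1}{n+k}\sum_{j=1}^kh(x_j)>\Delta(h),\qquad\frac{1}{n+k}\sum_{j=1}^kh(y_j)>\Delta(h).
\end{equation*}
For $h=h_*$ this yields the crucial bound $k/(n+k)>\Delta(h_*)=:c_0>0$; for $h=h_l$ it gives positive lower bounds on the $\mu_\bullet$-mass of any small enlargement of $U_l$, where $\mu_x:=k^{-1}\sum_j\delta_{x_j}$ and $\mu_y:=k^{-1}\sum_j\delta_{y_j}$.

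\textbf{Step 2 (use of $\mathcal H_1$; empirical-measure closeness).} For any extreme trace $\tau=\mathrm{tr}_{n+k}\circ\mathrm{ev}_z$ the common $\psi$-part cancels, so
$$|\tau(\phi_0(h)-\phi_1(h))|=\frac{k}{n+k}\,|\mu_x(h)-\mu_y(h)|.$$
Combined with $k/(n+k)>c_0$ from Step 1, the hypothesis $|\tau(\phi_0(h)-\phi_1(h))|<\delta$ gives $|\mu_x(h)-\mu_y(h)|<\delta/c_0$ for every $h\in\mathcal H_1$. Populate $\mathcal H_1$ with inner/outer bump approximations for each $U_l$ and for each union $U_l\cup U_{l'}$ of adjacent cells, so that the resulting bound controls the $\mu$-mass of every such open set.

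\textbf{Step 3 (producing $\sigma$).} Inside each $U_l$ pair up $\min(n_l^{(x)},n_l^{(y)})$ points of the two kinds, where $n_l^{(x)}:=\#\{j:x_j\in U_l\}$ and similarly $n_l^{(y)}$. The residual unmatched indices on each side have total size $\tfrac12\sum_l|n_l^{(x)}-n_l^{(y)}|$, controllable by $\delta$. The central task is then to pair up these residuals with matches of $d_{\mathcal F}$-distance $<\eps$. The $\mathcal H_0$-bounds from Step 1 ensure that no cell is depleted, and the $\mathcal H_1$-closeness on unions $U_l\cup U_{l'}$ ensures that any $x$-excess in a cell $U_l$ is balanced by a $y$-excess in an adjacent cell $U_{l'}$. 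A defect-version Hall argument (equivalently, a short-range transport computation on the nerve graph of the cover, whose connectedness is a consequence of the connectedness of $X$) then produces the desired bijection $\sigma$ with $d_{\mathcal F}(x_j,y_{\sigma(j)})<\eps$ for every $j$. Setting $u=1_n\oplus v_\sigma$ finishes the proof by the reduction at the start.

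\textbf{Main obstacle.} Step 3 is the technical core. Weak-$*$ closeness of two discrete measures does \emph{not} by itself imply a bijection with every pair close, because a vanishing fraction of mass may be transported over long distances. The mechanism that overcomes this is the combination of (a) the uniform positive-density bounds from $\mathcal H_0$, which prevent any local cell from being depleted of either configuration, and (b) the rich local closeness from $\mathcal H_1$ on unions of adjacent cells, which forces excesses and deficits to be balanced locally rather than merely globally. Choosing $\delta$ small enough with respect to $\min_l\Delta(h_l)$ ensures the Hall condition in the bipartite graph of $(x_j,y_{j'})$ with $d_{\mathcal F}(x_j,y_{j'})<\eps$, which is precisely the hypothesis needed to construct $\sigma$.
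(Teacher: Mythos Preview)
Your overall architecture---reducing to a permutation matching on the point-evaluation block and invoking Hall's marriage theorem---is exactly the paper's approach, and your reduction in the opening paragraph is correct. The gap is in Step~3, and it stems from your choice of $\mathcal H_1$.

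You populate $\mathcal H_1$ only with bumps for single cells $U_l$ and for unions $U_l\cup U_{l'}$ of \emph{adjacent} cells. But Hall's condition must be checked for \emph{every} subset $\tilde X\subseteq\{x_1,\dots,x_k\}$, equivalently for an arbitrary union of the $U_l$. Control on singles and pairs does not propagate: summing single-cell estimates accumulates an error of order $L\cdot\delta/c_0$, and your ``transport on the nerve graph'' does not explain why the flow along each edge stays below the local $y$-capacity. The sentence ``any $x$-excess in $U_l$ is balanced by a $y$-excess in an adjacent cell $U_{l'}$'' is simply not implied by closeness on pairs. This might be repairable with a careful choice of $\delta$ against $L$ and $\min_l\Delta(h_l)$, but that argument is absent.

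The paper avoids the issue by putting into $\mathcal H_1$ one function $h_O$ for \emph{every} union $O$ of cover elements (still a finite family), and into $\mathcal H_0$ a function $g_O$ supported in the annulus $O_{2\eta}\setminus O_\eta$ for each such $O$ with $O_\eta\neq X$, taking $\delta=\min_O\Delta(g_O)$. Given $\tilde X$, let $O$ be the union of cells meeting it; evaluating at $z=(p,\dots,p)$ with $p\notin O_{2\eta}$ (your $p$-trick, but now applied to $h_O\in\mathcal H_1$ as well) kills the projection block and gives directly
\[
|\tilde X|\le (n{+}k)\mathrm{tr}_z(\phi_0(h_O))<(n{+}k)\mathrm{tr}_z(\phi_1(h_O))+(n{+}k)\delta\le|O_\eta\cap\{y_j\}|+(n{+}k)\Delta(g_O).
\]
The density hypothesis on $g_O$ converts the last term into an honest count of $y$-points in the annulus, yielding $|\tilde X|\le|O_{2\eta}\cap\{y_j\}|\le|\tilde X_{3\eta}\cap\{y_j\}|$, which is exactly Hall. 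No summing of errors, no transport. The device you are missing is the annulus functions $g_O$: they absorb the additive $\delta$-error into genuine extra $y$-points sitting just outside $O_\eta$.
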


\begin{proof}
Fix a metric for $X$. Since $X$ is compact, there is $\eta>0$ such that for any $x, y \in X$ with $
\mathrm{dist}(x, y) < 3\eta$, one has $$\abs{f(x) - f(y)} < \eps,\quad f\in\mathcal F.$$

Choose an open cover $$\mathcal U=\{U_1, U_2, ..., U_{\abs{\mathcal U}}\}$$ with each $U_i$ of diameter at most $\eta$. Let $$\mathcal O = \{O_1, O_2, ..., O_S\}$$ denote the set of all finite unions of the sets $U_1, U_2, ..., U_{\abs{\mathcal U}}$. For each $O\in\mathcal O$, define
$$h_O(x) = \max\{1 - \mathrm{dist}(x, O)/\eta, 0\},\quad x\in X.$$ Also, for each $O\in\mathcal O$ with $O_\eta\neq X$, where $O_\eta$ denotes the $\eta$-neighborhood of $O$ (hence $O_{2\eta}\setminus O_\eta \neq \O$, as otherwise $O_\eta$ is a clopen set and $X$ is assumed to be connected), choose a non-zero positive function $g_O\in\mathrm{C}(X)$ such that $g_O \leq 1$ and
$$\mathrm{supp}(g_O) \subseteq O_{2\eta}\setminus O_\eta.$$
Then
$$\mathcal H_0 := \{g_O: O\in\mathcal O,\ O_\eta\neq X\},\quad \mathcal H_1 := \{h_O: O\in\mathcal O\},\quad\mathrm{and}\quad \delta:=\min\{\Delta(g_O): O\in\mathcal O\}$$
possess the property of the lemma.

Let $\phi_0$ and $\phi_1$ be given as in the statement of the lemma. Let $\tilde{X} \subseteq \{x_1, x_2, ..., x_k\}$ be an arbitrary subset. Let $U_{i_1}, U_{i_2}, ..., U_{i_l}\in \mathcal U$ be such that $U_{i_j} \cap \tilde{X} \neq \O$, and consider the union
$$O= U_{i_1} \cup \cdots \cup U_{i_l} \in \mathcal O.$$ 

Assume $O_{2\eta} \neq X$ (so that $O_{\eta}\neq X$), and choose 
$$x'_O \in X\setminus O_{2\eta} ,$$
and then choose $x_O \in X^d$ (e.g., pick $x_O=(x'_O, ..., x'_O)$) such that
$$\pi_1(x_O), ... , \pi_n(x_O) \in X\setminus O_{2\eta}.$$
Then
\begin{eqnarray*}
\abs{\tilde{X}} & \leq & (n+k)\mathrm{tr}_{x_O}(\phi_0(h_O)) \\
& \leq & (n+k)\mathrm{tr}_{x_O}(\phi_1(h_O)) + (n+k)\delta \\
& \leq & \abs{O_\eta \cap \{y_1, y_2, ..., y_k\}} + (n+k)\delta \\
& \leq & \abs{O_\eta \cap \{y_1, y_2, ..., y_k\}} + (n+k)\Delta(g_O)  \\
& \leq & \abs{O_\eta \cap \{y_1, y_2, ..., y_k\}} + (n+k)\mathrm{tr}_{x_O}(g_O)  \\
& \leq  & \abs{O_\eta \cap \{y_1, y_2, ..., y_k\}} + \abs{O_{\eta, 2\eta} \cap \{y_1, y_2, ..., y_k\}}  \\
& \leq  & \abs{O_{2\eta} \cap \{y_1, y_2, ..., y_k\}} \\
& \leq &  \abs{\tilde{X}_{3\eta} \cap \{y_1, y_2, ..., y_k\}} \quad\quad \textrm{($O_{2\eta} \subseteq \tilde{X}_{3\eta}$)}.
\end{eqnarray*}

%If $O_\eta = X$, then 
If $O_{2\eta} = X$, then $\tilde{X}_{3\eta} = X$. In particular, we still have
$$\abs{\tilde{X}} \leq k = \abs{\tilde{X}_{3\eta} \cap \{y_1, y_2, ..., y_k\}}.$$ That is, we always have
\begin{equation}
\abs{\tilde{X}} \leq \abs{\tilde{X}_{3\eta} \cap \{y_1, y_2, ..., y_k\}}.
\end{equation}
The same calculation shows that, for any subset $\tilde{Y} \subseteq \{y_1, y_2, ..., y_k\}$, 
\begin{equation}
\abs{\tilde{Y}} \leq \abs{\tilde{Y}_{3\eta} \cap \{x_1, x_2, ..., x_k\}}.
\end{equation}
Thus, by the Marriage Lemma (\cite{Hall-Marriage-Lemma}), there is a one-to-one correspondence
$$\sigma: \{x_1, x_2, ..., x_k\} \to \{y_1, y_2, ..., y_k\}$$
such that
$$\mathrm{dist}(x_i, \sigma{(x_i)}) < 3\eta,\quad i=1, 2, ..., k.$$
Denote by $w\in\mathrm{M}_k(\Comp)$ the permutation unitary that induces $\sigma$. Then $$u = \mathrm{diag}\{1_n, w\}$$ is the desired unitary.
\end{proof}

\section{An isomorphism theorem}

\begin{thm}\label{thm-p}
Assume $X$ is a solid connected metrizable compact space which is finite dimensional. 
Let $A_E$ and $A_E$ be two Villadsen algebras with point-evaluation set $E$ and $F$ respectively (possibly with different numbers of points, but with the same space $X$ and the same $(c_i)$ and $(s_{i, 1}, ..., s_{i, c_i})$).  
Then $A_E \cong A_F$ if, and only if, $$ \rho(\Kzero(A_E)) = \rho(\Kzero(A_F)) \quad \textrm{and}\quad \mathrm{rc}(A_E) = \mathrm{rc}(A_F),$$
where $\rho_{A_E}$ and $\rho_{A_F}$ are the unique states of the order-unit groups $\Kzero(A_E)$ and $\Kzero(A_F)$, respectively. (The solidness condition and finite-dimensionality condition on $X$ are not necessary when the radius of comparison is $0$.)
\end{thm}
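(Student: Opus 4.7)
The forward direction is immediate: C*-isomorphisms preserve both the state on $\Kzero$ and the radius of comparison. For the converse, I would first translate the hypotheses $\rho(\Kzero(A_E)) = \rho(\Kzero(A_F))$ and $\mathrm{rc}(A_E) = \mathrm{rc}(A_F)$ into the analytic conditions \eqref{K0-cond} and \eqref{close-0} of Section~\ref{trace-decomp}. Since $A_E$ and $A_F$ share the same $(c_i)$, $(s_{i,j})$, and $(n_i)$, the image $\rho(\Kzero(A_E))$ in $\Real$ is the subgroup $\Int[1/(n_0(n_1+k_1^{(E)})(n_2+k_2^{(E)}) \cdots)]$, and similarly for $A_F$, so equality of the $\Kzero$-state images is exactly the supernatural number equality \eqref{K0-cond}. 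Combining this with Theorem~\ref{rcA}, which gives $\mathrm{rc}$ as $\frac{\dim(X)}{2n_0}\lim_i c_1\cdots c_i/(n_1+k_1)\cdots(n_i+k_i)$, the equality of radii of comparison forces \eqref{close-0}. (When $\mathrm{rc} = 0$ the analogous ratio need not converge to $1$, but the hypothesis then reduces to equality of $\Kzero$-state images together with strict comparison of positive elements, from which classification follows by established tools once the trace simplices are matched via Lemma~\ref{trace-int-1}.)

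Next I would invoke Lemma~\ref{trace-int} to build the approximate intertwining diagram \eqref{diag-0} between the building blocks of $A_E$ and $A_F$, whose compositions $\phi^{(F,E)}_{i_{s+1},i_{s+2}} \circ \phi^{(E,F)}_{i_s,i_{s+1}}$ and $\phi^{(E)}_{i_{s+1},i_{s+2}} \circ \phi^{(E)}_{i_s,i_{s+1}}$ (and the symmetric pair) agree on their coordinate-projection parts, differ only in their point-evaluation parts, and are close at the level of traces. This is exactly the data fed into the Uniqueness Theorem~\ref{unique} (after the obvious matricial amplification, which is harmless because the maps in question are diagonal in form).

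To upgrade the trace-level intertwining to a norm-level one, I would apply Theorem~\ref{unique} inductively. Both $A_E$ and $A_F$ are simple (their point-evaluation sets being dense by construction), so for any finite set of positive functions in $\mathrm{C}(X^{d_{i_s}})$ the traces of their images under $\phi^{(E)}_{i_s,\infty}$ (respectively $\phi^{(F)}_{i_s,\infty}$) are bounded uniformly away from zero, supplying the function $\Delta$ required by Theorem~\ref{unique}. With finite sets $\mathcal{F}_s$, tolerances $\eps_s$, auxiliary sets $\mathcal{H}_{0,s}, \mathcal{H}_{1,s}$, and trace tolerances $\delta_s$ chosen recursively so that the $\delta_s$ supplied by Lemma~\ref{trace-int} dominate the trace-level requirements of Theorem~\ref{unique} at each stage, one obtains unitaries $u_s$ conjugating the two compositions to within $\eps_s$ on $\mathcal{F}_s$. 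A final application of Elliott's approximate intertwining argument then yields $A_E \cong A_F$.

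The main obstacle is the inductive bookkeeping in the third step: one must interleave the choice of $(\mathcal{F}_s, \eps_s)$ (controlling the eventual isomorphism) with the choice of $(\mathcal{H}_{0,s}, \mathcal{H}_{1,s}, \delta_s)$ arising from Theorem~\ref{unique}, while simultaneously forcing the trace-level approximate commutativity from Lemma~\ref{trace-int} to be tight enough at every stage, and ensuring that the unitary perturbations at one stage do not spoil the approximate commutativity at the next. The key structural feature making this feasible is that the coordinate-projection parts of the relevant compositions coincide \emph{exactly} (not merely approximately) at every stage, so that only the point-evaluation parts—precisely the setting in which Theorem~\ref{unique} applies via the Marriage Lemma—need to be conjugated into agreement.
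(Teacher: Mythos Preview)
Your proposal is correct and follows essentially the same path as the paper's proof: translate the invariant hypotheses into \eqref{K0-cond} and \eqref{close-0} via Theorem~\ref{rcA}, build the trace-level intertwining via Lemma~\ref{trace-int}, upgrade it to norm-level using Theorem~\ref{unique} (with $\Delta$ supplied by simplicity), and conclude by Elliott's approximate intertwining. Your identification of the key structural point---that the coordinate-projection parts of the two compositions agree exactly, so only the point-evaluation tails need to be matched---is exactly what drives the argument.

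One small gap worth flagging in your handling of the $\mathrm{rc}=0$ case: invoking ``established tools'' requires matching the full Elliott invariant, not just $\rho(\Kzero)$ and $\mathrm{T}$. The paper observes that, because $A_E$ and $A_F$ share the same $(c_i)$ and $(s_{i,j})$, the $\Kone$-groups agree automatically, and $\Kzero$ splits as $\rho(\Kzero)\oplus H$ where the infinitesimal part $H$ depends only on this shared data (the point evaluations contribute nothing to $H$). With $\mathcal Z$-stability, the order on $\Kzero$ is then determined by traces, so the full Elliott invariants coincide. You should make this explicit rather than leave it implicit in ``established tools''.
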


\begin{proof}
If $\mathrm{rc}(A_E) = \mathrm{rc}(A_F) = 0$, then $A_E$ and $A_F$ are $\mathcal Z$-stable. Since  $A_E$ and $A_F$ are built with the same $(c_i)$ and $(s_{i, 1}, ..., s_{i, c_i})$, it follows that $\Kone(A_E) \cong \Kone(A_F)$, and by \eqref{rdg-cond} and Lemma \ref{trace-int-1} that $\mathrm{T}(A_E) \cong\mathrm{T}(A_F)$. For each $d\in\mathbb N$, write $\Kzero(\mathrm{C}(X^d)) = \Int\oplus H_d$, where $H_d$ consists of the $\Kzero$-elements vanishing on traces of $\mathrm{C}(X^d)$, and note that the image of $H_{c_1\cdots c_i}$ under the connecting map is inside  $H_{c_1\cdots c_{i+1}}$  and is independent of the choices of the point evaluations. Then, denoting by $H$ the limit of $(H_{c_1\cdots c_i})$, which depends only on $(c_i)$ and $(s_{i, 1}, ..., s_{i, c_i})$, one has $$\Kzero(A_E) \cong \rho(\Kzero(A_E))\oplus H \quad \mathrm{and} \quad \Kzero(A_E) \cong \rho(\Kzero(A_E))\oplus H $$ as abelian groups. Since $\rho(\Kzero(A_E)) = \rho(\Kzero(A_F))$ and $A_E$ and $A_F$ are $\mathcal Z$-stable (so that the strict order on the $\Kzero$-group is determined by the traces), one has $\Kzero(A_E) \cong \Kzero(A_F)$ as order-unit groups. 
Therefore, $$((\Kzero(A_E), \Kzero^+(A_E), [1]_0), \Kone(A_E), \mathrm{T}(A), \rho_A) \cong  ((\Kzero(A_F), \Kzero^+(A_F), [1]_0), \Kone(A_F),  \mathrm{T}(B), \rho_B),$$ and hence $A_E\cong A_F$ (see \cite{ENST-ASH} and \cite{EGLN-ASH}). 

Now, assume $\mathrm{rc}(A_E) = \mathrm{rc}(A_F) \neq 0$. Since $X$ is solid, by Theorem \ref{rcA}, we have 
$$\frac{\mathrm{dim}(X)}{n_0} \cdot \lim_{i\to\infty}\frac{c_1\cdots c_i}{(n_1+k^{(E)}_1)\cdots (n_i+k^{(E)}_i)} = \frac{\mathrm{dim}(X)}{n_0} \cdot \lim_{i\to\infty} \frac{c_1\cdots c_i}{(n_1+k^{(F)}_1)\cdots (n_i+k^{(F)}_i)}.$$
Since $\mathrm{dim}(X)<\infty$, both sides are finite non-zero numbers, and 
$$\lim_{i\to\infty}\frac{c_1\cdots c_i}{(n_1+k^{(E)}_1)\cdots (n_i+k^{(E)}_i)} = \lim_{i\to\infty} \frac{c_1\cdots c_i}{(n_1+k^{(F)}_1)\cdots (n_i+k^{(F)}_i)}.$$
Since the limits are not $0$ (otherwise, the radius of comparison is $0$), the ratio of the two sequences above converges to $1$, i.e.,
$$ \lim_{i\to\infty} \frac{(n_1+k^{(E)}_1)\cdots (n_i+k^{(E)}_i)}{(n_1+k^{(F)}_1)\cdots (n_i+k^{(F)}_i) } = 1.$$

Consider the inductive limit constructions
$$
\xymatrix{
\mathrm{M}_{n_0}(\mathrm{C}(X) \ar[r]^-{\phi_{1}^{(E)}}) & \mathrm{M}_{n_0(n_1+k^{(E)}_1)}(\mathrm{C}(X^{c_1})) \ar[r]^-{\phi_{2}^{(E)}}  & \mathrm{M}_{n_0(n_1+k^{(E)}_1)(n_2+k^{(E)}_2)}(\mathrm{C}(X^{c_1c_2})) \ar[r] & \cdots \ar[r] & A_E, \\
\mathrm{M}_{n_0}(\mathrm{C}(X) \ar[r]^-{\phi_{1}^{(F)}})  & \mathrm{M}_{n_0(n_1+k^{(F)}_1)}(\mathrm{C}(X^{c_1})) \ar[r]^-{\phi_{2}^{(F)}}  & \mathrm{M}_{n_0(n_1+k^{(F)}_1)(n_2+k^{(F)}_2)}(\mathrm{C}(X^{c_1c_2})) \ar[r] & \cdots \ar[r] & A_F.
}
$$

Choose finite subsets $$\mathcal F^{(E)}_1\subseteq \mathrm{M}_{n_0}(C(X)),\ \mathcal F^{(E)}_2\subseteq \mathrm{M}_{n_0(n_1+k^{(E)}_1)}(\mathrm{C}(X^{c_1})), ...$$ 
and 
$$\mathcal F^{(F)}_1\subseteq \mathrm{M}_{n_0}(C(X)),\  \mathcal F^{(F)}_2\subseteq \mathrm{M}_{n_0(n_1+k^{(F)}_1)}(\mathrm{C}(X^{c_1})), ... $$ such that
$$\overline{\bigcup_{i=1}^\infty \mathcal F_i^{(E)}} = A_E \quad \mathrm{and}\quad \overline{\bigcup_{i=1}^\infty \mathcal F_i^{(F)}} = A_F.$$
Also, choose
$\eps_1 > \eps_2 > \cdots > 0$
such that
$$\sum_{i=1}^\infty \eps_i \leq 1.$$

Since $A_E$ and $A_F$ are simple, we have 
$$\Delta_E(h) := \inf\{\tau(h): \tau\in\mathrm{T}(A_E)\}>0,\quad h\in A_E^+\setminus\{0\},$$ 
and
$$ \Delta_F(h) := \inf\{\tau(h): \tau\in\mathrm{T}(A_F)\}>0,\quad h\in A^+_F\setminus\{0\}.$$

Applying Theorem \ref{unique} to $(\mathcal F_i^{(E)}, \eps_i)$, we obtain finite sets $\mathcal H_{i, 0}^{(E)}, \mathcal H_{i, 1}^{(E)} \subseteq \mathrm{M}_{m^{(E)}_i}(\mathrm{C}(X^{d_i}))$ and $\delta^{(E)}_i>0$. 
Applying Theorem \ref{unique} to $(\mathcal F_i^{(F)}, \eps_i)$, we obtain finite sets $\mathcal H_{i, 0}^{(F)}, \mathcal H_{i, 1}^{(F)} \subseteq \mathrm{M}_{m^{(F)}_i}(\mathrm{C}(X^{d_i}))$ and $\delta^{(F)}_i>0$. Set $\delta_i=\min\{\delta_i^{(E)}, \delta_i^{(F)}\}$.

Applying Lemma \ref{trace-int}, we have the diagram
\begin{equation}\label{diag-1}
\xymatrix{
\mathrm{M}_{n_0}(\mathrm{C}(X)) \ar[r]^-{\phi_{1, i_1}^{(E)}} \ar[dr]^-{\phi_{1, i_1}^{(E, F)}} & \mathrm{M}_{m^{(E)}_{i_1}}(\mathrm{C}(X^{d_{i_1}})) \ar[r]^-{\phi_{i_1, i_2}^{(E)}}  & \mathrm{M}_{m^{(E)}_{i_2}}(\mathrm{C}(X^{d_{i_2}})) \ar[r] \ar[dr]^-{\phi_{i_2, i_3}^{(E, F)}} & \cdots \ar[r] & A_E \\
\mathrm{M}_{n_0}(\mathrm{C}(X)) \ar[r]^-{\phi_{1, i_1}^{(F)}}  & \mathrm{M}_{m^{(F)}_{i_1}}(\mathrm{C}(X^{d_{i_1}})) \ar[r]^-{\phi_{i_1, i_2}^{(F)}} \ar[ur]^-{\phi_{i_1, i_2}^{(F, E)}}  & \mathrm{M}_{m^{(F)}_{i_2}}(\mathrm{C}(X^{d_{i_2}})) \ar[r] & \cdots \ar[r] & A_F
}
\end{equation}
which is approximately commutative at the level of traces: that is,
$$\abs{\tau(\phi^{(F, E)}_{i_{s+1}, i_{s+2}} \circ \phi^{(E, F)}_{i_s, i_{s+1}}(h) - \phi^{(E)}_{i_{s+1}, i_{s+2}} \circ \phi^{(E)}_{i_s, i_{s+1}}(h))} < \delta_s,$$
for any $s=0, 2, ...$, any $h\in \mathrm{M}_{m^{(E)}_{i_s}}(\mathrm{C}(X^{d_{i_s}}))$ with $\norm{h} \leq 1$, and any $\tau\in\mathrm{T}(\mathrm{M}_{m^{(E)}_{i_{s+2}}}(\mathrm{C}(X^{d_{i_{s+2}}})))$; and, furthermore, 
$$\abs{\tau(\phi^{(E, F)}_{i_{s+1}, i_{s+2}} \circ \phi^{(F, E)}_{i_s, i_{s+1}}(h) - \phi^{(F)}_{i_{s+1}, i_{s+2}} \circ \phi^{(F)}_{i_s, i_{s+1}}(h))} < \delta_s,$$
for any $s=1, 3, ...$, any $h\in \mathrm{M}_{m^{(F)}_{i_s}}(\mathrm{C}(X^{d_{i_s}}))$ with $\norm{h} \leq 1$, and any $\tau\in\mathrm{T}(\mathrm{M}_{m^{(F)}_{i_{s+2}}}(\mathrm{C}(X^{d_{i_{s+2}}})))$.
Moreover, the maps $\phi^{(F, E)}_{i_{s+1}, i_{s+2}} \circ \phi^{(E, F)}_{i_s, i_{s+1}}$ and $\phi^{(E)}_{i_{s+1}, i_{s+2}} \circ \phi^{(E)}_{i_s, i_{s+1}}$ share the same coordinate projection part, and so also do the maps $\phi^{(E, F)}_{i_{s+1}, i_{s+2}} \circ \phi^{(F, E)}_{i_s, i_{s+1}}$ and $\phi^{(F)}_{i_{s+1}, i_{s+2}} \circ \phi^{(F)}_{i_s, i_{s+1}}$.

Then, by Theorem \ref{unique}, there are unitaries
$$u^{(E)}_2\in \mathrm{M}_{m_{i_2}}(\mathrm{C}(X^{d_{i_2}})),\quad u^{(E)}_4\in \mathrm{M}_{m_{i_4}}(\mathrm{C}(X^{d_{i_4}})),\ ...$$
and
$$u^{(E)}_3\in \mathrm{M}_{m_{i_3}}(\mathrm{C}(X^{d_{i_3}})),\quad u^{(E)}_5\in \mathrm{M}_{m_{i_5}}(\mathrm{C}(X^{d_{i_5}})),\ ...$$
such that 
$$\norm{\phi^{(F, E)}_{i_{s+1}, i_{s+2}} \circ \phi^{(E, F)}_{i_s, i_{s+1}}(f) - (u_{s+2}^{(E)})^*(\phi^{(E)}_{i_{s+1}, i_{s+2}} \circ \phi^{(E)}_{i_s, i_{s+1}}(f))u_{s+2}^{(E)}} < \eps_s$$
for any $s=0, 2, 4...$, any $f\in \mathcal F_{i_s}^{(E)} \subseteq \mathrm{M}_{m^{(E)}_{i_s}}(\mathrm{C}(X^{d_{i_s}}))$; and, furthermore, 
$$\norm{\phi^{(E, F)}_{i_{s+1}, i_{s+2}} \circ \phi^{(F, E)}_{i_s, i_{s+1}}(f) - (u_{s+2}^{(F)})^*\phi^{(F)}_{i_{s+1}, i_{s+2}} \circ \phi^{(F)}_{i_s, i_{s+1}}(f)u_{s+2}^{(F)}} < \eps_s$$
for any $s=1, 3, ...$, any $f\in \mathcal F_{i_s}^{(F)}  \subseteq \mathrm{M}_{m^{(F)}_{i_s}}(\mathrm{C}(X^{d_{i_s}}))$.

That is, the $i$th triangle of the diagram \begin{equation}\label{diag-2}
\xymatrix{
\mathrm{M}_{n_0}(\mathrm{C}(X)) \ar[r]^-{\phi_{1, i_1}^{(E)}} \ar[dr]^-{\phi_{1, i_1}^{(E, F)}} & \mathrm{M}_{m^{(E)}_{i_1}}(\mathrm{C}(X^{d_{i_1}})) \ar[r]^-{\mathrm{ad}(u_2^{(E)})\circ\phi_{i_1, i_2}^{(E)}}  & \mathrm{M}_{m^{(E)}_{i_2}}(\mathrm{C}(X^{d_{i_2}})) \ar[r] \ar[dr]^-{\phi_{i_2, i_3}^{(E, F)}} & \cdots \ar[r] & A_E \\
\mathrm{M}_{n_0}(\mathrm{C}(X)) \ar[r]_-{\phi_{1, i_1}^{(F)}}  & \mathrm{M}_{m^{(F)}_{i_1}}(\mathrm{C}(X^{d_{i_1}})) \ar[r]_-{\phi_{i_1, i_2}^{(F)}} \ar[ur]^-{\phi_{i_1, i_2}^{(F, E)}}  & \mathrm{M}_{m^{(F)}_{i_2}}(\mathrm{C}(X^{d_{i_2}})) \ar[r]_-{ \mathrm{ad}(u_3^{(F)})\circ \phi_{i_1, i_2}^{(F)}} & \cdots \ar[r] & A_F
}
\end{equation}
is approximately commutative (pointwise) in norm, to within tolerance $(\mathcal F_s^{(E)}, \eps_s)$ or $(\mathcal F_s^{(F)}, \eps_s)$. Then, by the approximate intertwining argument (Theorems 2.1 and 2.2 of \cite{Ell-AT-RR0}), we have $$A_E \cong A_F,$$ as desired.
\end{proof}

\begin{rem}
In the case of Villadsen algebras, i.e., with coordinate projections of multiplicity one (and rapid dimension growth), both the trace simplex of $A(X, (n_i), (k_i))$ and the trace simplex of $A(X^2, (n_i), (k_i))$ are isomorphic to the Poulsen simplex (Theorem \ref{trace-VA}). However, the algebras $A(X, (n_i), (k_i))$ and $A(X^2, (n_i), (k_i))$ are not isomorphic in general as their radii of comparison (see \cite{RC-Toms}) might be different. In the case that $X$ is contractible, we will show below (Corollary \ref{diff-prod}) that this class of C*-algebras is in fact classified by the order-unit $\Kzero$-group and the radius of comparison.
\end{rem}

\begin{rem}
In the case of a Goodearl algebra (\cite{Goodearl-AH}), i.e., with only one coordinate projection, $\mathcal Z$-stability always holds, as the mean dimension in the
sense of \cite{Niu-MD} is always zero (Theorem \ref{rcA}).
\end{rem}

\section{Let $(n_i)$ vary}

We shall show the following theorem in this section:

\begin{thm}\label{n-theorm}
Let $X$ be a K-contractible (i.e., $\Kzero(\mathrm{C}(X)) = \Int$ and $\Kone(\mathrm{C}(X)) = \{0\}$) solid metrizable compact space which is finite-dimensional. Let 
$$A:=A(X, (n^{(A)}_i), (k^{(A)}_i), E^{(A)}) \quad\textrm{and}\quad B:=B(X, (n^{(B)}_i), (k^{(B)}_i), F^{(B)})$$
be Villadsen algebras (with coordinate projections of arbitrary multiplicity).
Then 
$A \cong B$ 
if, and only if,  $$\Kzero(A) \cong \Kzero(B),\quad \mathrm{T}(A) \cong \mathrm{T}(B), \quad \textrm{and}\quad \mathrm{rc}(A) = \mathrm{rc}(B).$$
Moreover, if $\mathrm{rc}(A) \neq 0$ (or $\mathrm{rc}(B) \neq 0$), then $\mathrm{T}(A)$ (or $\mathrm{T}(B)$) is redundant in the invariant, that is, $A \cong B$ 
if, and only if,  $$\Kzero(A) \cong \Kzero(B) \quad \textrm{and}\quad \mathrm{rc}(A) = \mathrm{rc}(B).$$
\end{thm}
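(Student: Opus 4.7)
The plan is to split according to whether the common value of $\mathrm{rc}(A) = \mathrm{rc}(B)$ is zero or positive.

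When $\mathrm{rc}(A) = 0$, both $A$ and $B$ are $\mathcal Z$-stable simple unital ASH algebras with diagonal connecting maps, so they fall in the scope of the classification theorem cited in the introduction. Since $X$ is K-contractible, for every $d$ one has $\Kzero(\mathrm{C}(X^d)) = \Int$ and $\Kone(\mathrm{C}(X^d)) = 0$, so $\Kone(A) = \Kone(B) = 0$ and every trace on a building block induces the same (normalized-rank) state on $\Kzero$. Hence the $\Kzero$-trace pairing is determined by the order-unit structure of $\Kzero$ alone, so the hypotheses $\Kzero(A)\cong\Kzero(B)$ and $\mathrm{T}(A)\cong\mathrm{T}(B)$ give full agreement of Elliott invariants, and classification yields $A \cong B$.

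When $\mathrm{rc}(A) \neq 0$, Theorem \ref{trace-VA} forces $\mathrm{T}(A)$ and $\mathrm{T}(B)$ both to be the Poulsen simplex, so the trace-simplex hypothesis is automatic; it remains to deduce $A \cong B$ from $\Kzero(A)\cong\Kzero(B)$ and $\mathrm{rc}(A)=\mathrm{rc}(B)$. I would mimic the proof of Theorem \ref{thm-p}: first use $\Kzero(A)\cong\Kzero(B)$ as ordered order-unit groups to extract subsequences $(i_s)$ of the $A$-indices and $(j_s)$ of the $B$-indices so that the order-unit ranks of the corresponding stages are mutually divisible, in the spirit of Definition \ref{close-size}. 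Then, exploiting $\mathrm{rc}(A)=\mathrm{rc}(B)\neq 0$ together with Theorems \ref{rcA} and \ref{cor-almost-1} (which give $\lim\lim (c_i/n_i)\cdots(c_{i+j}/n_{i+j}) = 1$ on both sides), I would define unital homomorphisms between the telescoped stages of the form ``diagonal coordinate projections plus point evaluations'', as in Lemma \ref{trace-int}, so that compositions of two consecutive intertwining maps agree with the coordinate-projection part of the original inductive system up to point evaluations. Condition \eqref{rdg-cond} makes the trace discrepancy negligible, Theorem \ref{unique} upgrades this to norm approximate commutativity, and Elliott's approximate-intertwining theorem \cite{Ell-AT-RR0} delivers $A\cong B$.

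The main obstacle will be the difference of the coordinate-projection data $(c_i^{(A)}, s_{i,j}^{(A)})$ and $(c_i^{(B)}, s_{i,j}^{(B)})$: in Theorem \ref{thm-p} both systems share these data and only the point evaluations vary, whereas here, when constructing an intertwining map from a stage of $A$ into a stage of $B$, one must realise the $A$-coordinate projections in terms of $B$-coordinate projections factoring through the appropriate power of $X$. This requires sufficiently coarse telescoping so that the rank budget of the target stage accommodates the required coordinate projections (with the remainder absorbed by point evaluations); the equality of radii of comparison is precisely what enables this matching in the limit, and the K-contractibility of $X$ is what allows arbitrary telescoping without introducing $\Kone$ obstructions.
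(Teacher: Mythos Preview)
Your treatment of the $\mathrm{rc}=0$ case matches the paper. The gap is in the $\mathrm{rc}\neq 0$ case: you assert that compositions of consecutive intertwining maps can be arranged to ``agree with the coordinate-projection part of the original inductive system up to point evaluations,'' and that Theorem~\ref{unique} then suffices. This is not achievable in general when the coordinate data $(c_i^{(A)}, s_{i,j}^{(A)})$ and $(c_i^{(B)}, s_{i,j}^{(B)})$ differ. The paper's Lemma~\ref{trace-twining-2} shows what actually happens: after matching as many coordinate projections as possible, the two maps being compared decompose as $\mathrm{diag}\{P_s, R'_s, \Theta'_s\}$ and $\mathrm{diag}\{P_s, R''_s, \Theta''_s\}$, where $P_s$ is a common coordinate-projection block and $\Theta'_s, \Theta''_s$ are point-evaluation blocks of equal rank, but $R'_s$ and $R''_s$ are residual blocks of small relative rank that contain genuine (and different) coordinate projections, not merely point evaluations. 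Theorem~\ref{unique} only applies when the two maps share their entire coordinate-projection part, so it cannot handle $R'_s$ versus $R''_s$.

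The paper resolves this with a second uniqueness tool, the stable uniqueness Theorem~\ref{stable-uniq}: since $\mathrm{rank}(R'_s)/\mathrm{rank}(\Theta'_s)$ is arbitrarily small, the large point-evaluation block $\Theta''_s$ absorbs the discrepancy between $R'_s$ and $R''_s$ up to unitary equivalence. This step is exactly where the K-contractibility of $X$ enters in earnest (via $\mathrm{KK}(\mathrm{C}(X),\cdot)$ being determined by rank), not merely to avoid $\Kone$ obstructions under telescoping as you suggest. Without this stable uniqueness ingredient your argument does not close, and you should expect to need it whenever the two inductive systems have different $(c_i)$ and $(s_{i,j})$.
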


\begin{rem}
Since $X$ is assumed to be K-contractible, we have
$$\Kzero(A)\cong  \Int[\frac{1}{n_0^{(A)}}, \frac{1}{n_1^{(A)} + k_1^{(A)}}, ...] \subseteq \Ratn$$
and
$$\Kzero(B)\cong  \Int[\frac{1}{n_0^{(B)}}, \frac{1}{n_1^{(B)} + k_1^{(B)}}, ...] \subseteq \Ratn,$$
with the class of the unit being of course $1\in\mathbb Z$.
\end{rem}

\begin{rem}
All contractible spaces are K-contractible, but not all K-contractible spaces are contractible. Instances of this are the 2-skeleton of the Poincar\'{e} homology 3-sphere (or the Poincar\'{e} homology 3-sphere with a small open ball removed), and the join of two infinite brooms:

\begin{center}
      \includegraphics[scale=0.6]{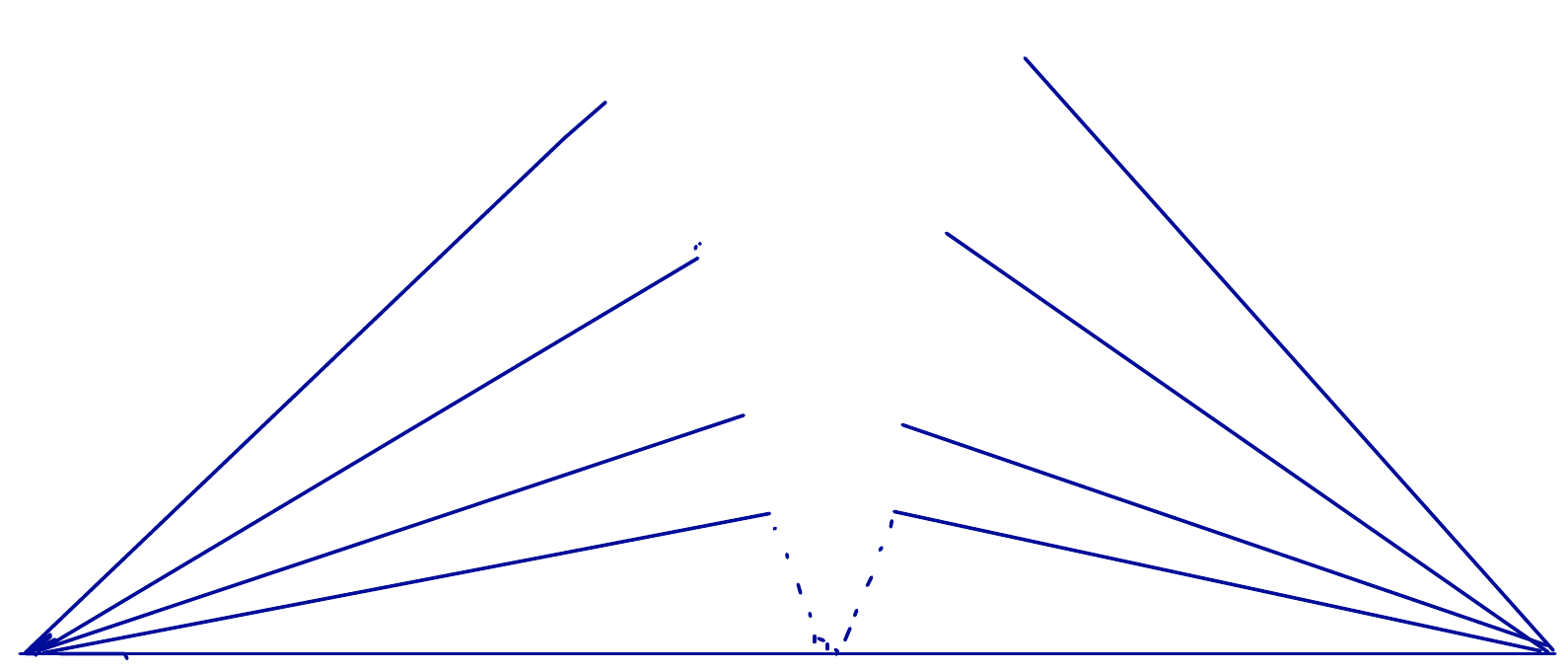}
   \end{center}
%\begin{figure}
%  \includegraphics[width=\linewidth]{K-cont}
%  \caption{A boat.}
%  \label{fig:boat1}
%\end{figure}

\end{rem}

\subsection{An intertwining diagram}
\begin{lem}\label{trace-twining-2}
With $X$ a metrizable compact space, let 
$$A:=A(X, (n^{(A)}_i), (k^{(A)}_i), E^{(A)}) \quad\textrm{and}\quad B:=B(X, (n^{(B)}_i), (k^{(B)}_i), F^{(B)})$$
be Villadsen algebras.
Assume that
\begin{equation}\label{super-n}
n_0^{(A)}\prod_{i=1}^\infty(n_i^{(A)} + k_i^{(A)}) = n_0^{(B)}\prod_{i=1}^\infty(n_i^{(B)} + k_i^{(B)}),
\end{equation} 
as supernatural numbers, and
%\begin{equation}\label{same-rc}
%\frac{1}{n_0^{(A)}}\prod_{i=1}^\infty \frac{n^{(A)}_i}{n^{(A)}_i + k^{(A)}_i} = \frac{1}{n_0^{(B)}} \prod_{i=1}^\infty \frac{n^{(B)}_i}{n^{(B)}_i + k^{(B)}_i} \neq 0,
%\end{equation} 
\begin{equation}\label{same-rc}
\frac{1}{n_0^{(A)}}\prod_{i=1}^\infty \frac{c^{(A)}_i}{n^{(A)}_i + k^{(A)}_i} = \frac{1}{n_0^{(B)}} \prod_{i=1}^\infty \frac{c^{(B)}_i}{n^{(B)}_i + k^{(B)}_i} \neq 0,
\end{equation} 
as real numbers.
Let $\delta_1, \delta_2, ...$ be a decreasing sequence of strictly positive numbers with $$ \sum_{n=1}^\infty \delta_n < 1.$$ 
Then there is a diagram
\begin{equation}\label{diag-0-7}
\xymatrix{
\mathrm{M}_{n_0^{(A)}}(\mathrm{C}(X)) \ar[r]^-{\phi_{1, i_1}^{(A)}} \ar[dr]^{\phi_{1, i_1}^{(A, B)}} & \mathrm{M}_{m^{(A)}_{i_1}}(\mathrm{C}(X^{d^{(A)}_{i_1}})) \ar[r]^-{\phi_{i_1, i_2}^{(A)}}  & \mathrm{M}_{m^{(A)}_{i_2}}(\mathrm{C}(X^{d^{(A)}_{i_2}})) \ar[r] \ar[dr]^{\phi_{i_2, i_3}^{(A, B)}} & \cdots \ar[r] & A \\
\mathrm{M}_{n_0^{(B)}}(\mathrm{C}(X)) \ar[r]^-{\phi_{1, i_1}^{(B)}}  & \mathrm{M}_{m^{(B)}_{i_1}}(\mathrm{C}(X^{d^{(B)}_{i_1}})) \ar[r]^-{\phi_{i_1, i_2}^{(B)}} \ar[ur]^{\phi_{i_1, i_2}^{(B, A)}}  & \mathrm{M}_{m^{(F)}_{i_2}}(\mathrm{C}(X^{d^{(B)}_{i_2}})) \ar[r] & \cdots \ar[r] & B,
}
\end{equation}
where
$$m_i:=n_0(n_1+k_1)\cdots (n_{i-1}+k_{i-1}), \quad d_i:=c_1\cdots c_{i-1},$$
such that
$$\abs{\tau(\phi^{(B, A)}_{i_{s+1}, i_{s+2}} \circ \phi^{(A, B)}_{i_s, i_{s+1}}(h) - \phi^{(A)}_{i_{s+1}, i_{s+2}} \circ \phi^{(A)}_{i_s, i_{s+1}}(h))} < \delta_{i_s}$$
for any $s=0, 2, ...$, any $h\in \mathrm{M}_{m^{(A)}_{i_s}}(\mathrm{C}(X^{d^{(A)}_{i_s}}))$ with $\norm{h}\leq 1$, and any $\tau\in\mathrm{T}(\mathrm{M}_{m^{(A)}_{i_{s+2}}}(\mathrm{C}(X^{d^{(A)}_{i_{s+2}}})))$; 
and, furthermore,
$$\abs{\tau(\phi^{(A, B)}_{i_{s+1}, i_{s+2}} \circ \phi^{(B, A)}_{i_s, i_{s+1}}(h) - \phi^{(B)}_{i_{s+1}, i_{s+2}} \circ \phi^{(B)}_{i_s, i_{s+1}}(h))} < \delta_{i_s}$$
for any $s=1, 3, ...$, any $h\in \mathrm{M}_{m^{(B)}_{i_s}}(\mathrm{C}(X^{d^{(B)}_{i_s}}))$ with $\norm{h}\leq 1$, and any $\tau\in\mathrm{T}(\mathrm{M}_{m^{(B)}_{i_{s+2}}}(\mathrm{C}(X^{d^{(B)}_{i_{s+2}}})))$.

Moreover, for each $s = 0, 2, ... $,
$$\phi^{(B, A)}_{i_{s+1}, i_{s+2}} \circ \phi^{(A, B)}_{i_s, i_{s+1}} =\mathrm{diag}\{P_s, R'_s, \Theta'_s \} $$ 
and
$$\phi^{(A)}_{i_{s}, i_{s+2}} = \phi^{(A)}_{i_{s+1}, i_{s+2}} \circ \phi^{(A)}_{i_s, i_{s+1}} = \mathrm{diag}\{P_s, R''_s,  \Theta_s''\}, $$
where $P_s$ is a coordinate projection, and $\Theta'_s$ and $\Theta''_s$ are point evaluations with
$$ \mathrm{rank}(\Theta_s') = \mathrm{rank}(\Theta_s'')\quad\textrm{and}\quad  \frac{ \mathrm{rank}(R'_s)} {\mathrm{rank}(\Theta'_s)}  = \frac{ \mathrm{rank}(R''_s)} {\mathrm{rank}(\Theta''_s)} < \delta_{i_s},$$
and
for each $s = 1, 3, ...$,
$$\phi^{(A, B)}_{i_{s+1}, i_{s+2}} \circ \phi^{(B, A)}_{i_s, i_{s+1}} =\mathrm{diag}\{P_s, R'_s, \Theta'_s \} $$ 
and
$$\phi^{(B)}_{i_{s}, i_{s+2}} = \phi^{(B)}_{i_{s+1}, i_{s+2}} \circ \phi^{(A)}_{i_s, i_{s+1}} = \mathrm{diag}\{P_s, R''_s,  \Theta_s''\}, $$
where $P_s$ is a coordinate projection, and $\Theta'_s$ and $\Theta''_s$ are point evaluations with
$$ \mathrm{rank}(\Theta_s') = \mathrm{rank}(\Theta_s'')\quad\textrm{and}\quad  \frac{ \mathrm{rank}(R'_s)} {\mathrm{rank}(\Theta'_s)}  = \frac{ \mathrm{rank}(R''_s)} {\mathrm{rank}(\Theta''_s)} < \delta_{i_s}.$$
\end{lem}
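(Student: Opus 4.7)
The plan is to extend the proof of Lemma~\ref{trace-int}, replacing the comparison between two point-evaluation sets on a common underlying system with a cross-comparison between two different seed data $(n_i^{(A)}), (c_i^{(A)}), (k_i^{(A)})$ and $(n_i^{(B)}), (c_i^{(B)}), (k_i^{(B)})$. The first step will be to formulate an analog of Definition~\ref{close-size} for this asymmetric setting: one seeks arbitrarily large alternating pairs $i'_s < i_s$ such that (i) $m^{(A)}_{i'_s}$ divides $m^{(B)}_{i_s}$ (or vice versa), which follows from the supernatural-number hypothesis~\eqref{super-n}; (ii) the individual rapid-dimension-growth tails
$$1-\prod_{j\geq 0}\frac{n^{(A)}_{i'_s+j}}{n^{(A)}_{i'_s+j}+k^{(A)}_{i'_s+j}}<\delta_{i_s}$$
and its $B$-side counterpart are small, which is guaranteed by~\eqref{rdg-cond}; and (iii) enough room is available in the target building block for a cross-map whose coordinate-projection part has the prescribed shape. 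Condition~(iii) is the delicate one and, as in Lemma~\ref{auto-close}, follows from the equal real-number condition~\eqref{same-rc}, since the equality of the limits $\prod c_i/(n_i+k_i)$ on the two sides makes the number of coordinate-projection slots available on the $B$-side to accommodate the $A$-side coordinate structure grow proportionally.

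With such indices in hand, I would define the cross-maps
$$\tilde\phi^{(A,B)}_{i'_{s+1},i_{s+1}}\colon f\mapsto \mathrm{diag}(f\circ\pi_{j_1},\ldots,f\circ\pi_{j_r},\ \text{point evaluations}),$$
with $(r+k_{\mathrm{pt}})\cdot m^{(A)}_{i'_{s+1}}=m^{(B)}_{i_{s+1}}$ and the indices $j_t$ parametrizing coordinate projections $X^{d^{(B)}_{i_{s+1}}}\to X^{d^{(A)}_{i'_{s+1}}}$; then set $\phi^{(A,B)}_{i_s,i_{s+1}} = \tilde\phi^{(A,B)}_{i'_{s+1},i_{s+1}}\circ \phi^{(A)}_{i_s, i'_{s+1}}$, with the symmetric construction on the $B$-side. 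The composition $\phi^{(B,A)}_{i_{s+1},i_{s+2}}\circ\phi^{(A,B)}_{i_s,i_{s+1}}$ then splits, up to a permutation of direct summands, into blocks where all four factor-maps contribute coordinate projections (giving a ``pure'' coord-projection part), blocks where some factors contribute point evaluations but some coord projections survive (giving an extra coord-projection part $R'_s$), and blocks that are pure point evaluations ($\Theta'_s$). The analogous decomposition applies to $\phi^{(A)}_{i_s,i_{s+2}}$.

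The hard part will be choosing the $j_t$'s in the cross-maps so that the pure coordinate-projection blocks on the two sides coincide, yielding a common $P_s$; this amounts to arranging that the standard $A$-side coord projections of $\phi^{(A)}_{i_s, i'_{s+1}}$ are faithfully transported by the pulled-back coord projections of $\tilde\phi^{(B,A)}_{i'_{s+2},i_{s+2}}\circ\phi^{(B)}_{i_{s+1},i'_{s+2}}\circ\tilde\phi^{(A,B)}_{i'_{s+1},i_{s+1}}$, which is achievable because room condition~(iii) leaves enough freedom. Any residual coord projections are collected into $R'_s$ on the cross-composition side, while the excess coord projections of $\phi^{(A)}_{i_s,i_{s+2}}$ beyond $P_s$ are collected into $R''_s$; a rank-counting argument, using that both compositions are unital maps into the same codomain and share $P_s$ and $\mathrm{rank}(\Theta'_s)=\mathrm{rank}(\Theta''_s)$, gives $\mathrm{rank}(R'_s)=\mathrm{rank}(R''_s)$. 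The ratio bound $\mathrm{rank}(R)/\mathrm{rank}(\Theta)<\delta_{i_s}$ then follows because both $R$ and $\Theta$ concentrate in the tail of the cross-maps, where the point-evaluation fraction is at most $1-\prod n/(n+k)<\delta_{i_s}$ by~(ii).

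Finally, the trace estimate mirrors the one in Lemma~\ref{trace-int}: since $P_s$ contributes identically on both sides and cancels, and the ranks of $R',R''$ and of $\Theta',\Theta''$ agree, the normalized trace difference is bounded by $(2\,\mathrm{rank}(R)+2\,\mathrm{rank}(\Theta))\|h\|/N$, which by the tail bound above is at most a constant multiple of $\delta_{i_s}$. Selecting the $\delta_{i_s}$ at the sufficient-closeness step a constant factor smaller than the target then delivers the required trace inequality uniformly in $h$ with $\|h\|\leq 1$ and all $\tau$, completing the construction of the diagram~\eqref{diag-0-7}.
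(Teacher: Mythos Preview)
Your overall plan matches the paper's strategy: build alternating cross-maps $\tilde\phi^{(A,B)}$ and $\tilde\phi^{(B,A)}$ out of coordinate projections plus arbitrary point evaluations, precompose with the native connecting maps, and track the resulting block decomposition. Two points, however, are genuine gaps rather than routine omissions.

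First, you never say what $r$ is. In the paper the number of coordinate projections in $\tilde\phi^{(A,B)}_{i'_1,i_1}$ is the specific integer $l_{i'_1,i_1}$ determined by $0\le d^{(B)}_{i_1}-d^{(A)}_{i'_1}\cdot l_{i'_1,i_1}<d^{(A)}_{i'_1}$, i.e., $l=\lfloor d^{(B)}_{i_1}/d^{(A)}_{i'_1}\rfloor$. This choice is what makes the common block $P_s$ appear automatically (no freedom in the $j_t$'s is needed or used) and, crucially, leaves a remainder that can be bounded.

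Second, your argument for $\mathrm{rank}(R_s)/\mathrm{rank}(\Theta_s)<\delta_{i_s}$ does not work as stated. Saying that both $R$ and $\Theta$ ``concentrate in the tail where the point-evaluation fraction is at most $\delta$'' yields at best $\mathrm{rank}(R)\lesssim \delta N$ and $\mathrm{rank}(\Theta)\lesssim \delta N$, hence a ratio of order $1$, not $\delta$. The paper obtains the bound by a two-scale argument: it first imposes $\delta_1<k_1^{(A)}/(n_1^{(A)}+k_1^{(A)})$, which forces $\mathrm{rank}(\Theta_1'')\ge \delta_1\cdot(n_1^{(A)}\cdots n_{i_2}^{(A)})$ from below, and then shows $\mathrm{rank}(R_1)\le \tfrac{3}{4}\delta_1^{\,2}\cdot(n_1^{(A)}\cdots n_{i_2}^{(A)})$. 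The $\delta^2$ is not free: it requires invoking Theorem~\ref{cor-almost-1} (a consequence of \eqref{same-rc} being \emph{nonzero}, which you never use) to arrange that beyond stage $i'_1$ the fraction of coordinate projections with multiplicity $>1$ is below $\delta_1^2/12$ and that $c/n$ is within $\delta_1^2$ of $1$, and choosing $i_1$ so large that $d^{(A)}_{i'_1}/d^{(B)}_{i_1}<\delta_1^2/12$. These $\delta^2$-level controls on the multiplicities and on the integer-division remainder are what make $R_s$ small relative to $\Theta_s$; your condition~(ii) alone gives only a $\delta$-level bound on $R_s$ and cannot close the ratio estimate.
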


\begin{proof}
Consider the inductive constructions
$$
\xymatrix{
\mathrm{M}_{n_0^{(A)}}(\mathrm{C}(X)) \ar[r]^-{\phi_{1}^{(A)}} & \mathrm{M}_{m_2^{(A)}}(\mathrm{C}(X^{d^{(A)}_2})) \ar[r]^-{\phi_{2}^{(A)}}  & \mathrm{M}_{m_3^{(A)}}(\mathrm{C}(X^{d^{(A)}_3})) \ar[r] & \cdots \ar[r] & A, \\
\mathrm{M}_{n_0^{(B)}}(\mathrm{C}(X)) \ar[r]^-{\phi_{1}^{(B)}}  & \mathrm{M}_{m_2^{(B)}}(\mathrm{C}(X^{d^{(B)}_2})) \ar[r]^-{\phi_{2}^{(B)}}  & \mathrm{M}_{m_3^{(B)}}(\mathrm{C}(X^{d^{(B)}_3})) \ar[r] & \cdots \ar[r] & B,
}
$$
where
$$m_i:=n_0(n_1+k_1)\cdots (n_{i-1}+k_{i-1}), \quad d_i:=c_1\cdots c_{i-1}.$$

Set (see \eqref{same-rc})
$$ \gamma:= \lim_{i\to\infty}\frac{c^{(A)}_1 \cdots c^{(A)}_i}{n^{(A)}_0(n^{(A)}_1+k^{(A)}_1)\cdots (n^{(A)}_i+k^{(A)}_i)} = \lim_{i\to\infty}\frac{c^{(B)}_1 \cdots c^{(B)}_i}{n^{(B)}_0(n^{(B)}_1+k^{(B)}_1)\cdots (n^{(B)}_i+k^{(B)}_i)}  \in (0, 1).$$

%Consider the set $\mathcal H^{(A)}_1$. 
Without loss of generality, since $k_i^{(A)} > 0$, $i=1, 2, ...$, we may assume 
\begin{equation}\label{delta-assumption}
\delta_1 < \frac{k^{(A)}_1}{n^{(A)}_1 + k^{(A)}_1}\quad\mathrm{and}\quad \frac{\frac{3}{4}\delta_1}{1-\frac{3}{4}\delta_1} < \delta_1 <1.
\end{equation} 
There is $i'_1>0$  such that
\begin{equation}\label{small-E-1-n}
1 - \prod_{j=0}^\infty \frac{n^{(A)}_{i'_1+j}}{n^{(A)}_{i'_1+j}+k^{(A)}_{i'_1+j}} < \delta_1,
\end{equation}
and, by Theorem \ref{cor-almost-1}, $i_1'$ can be chosen sufficiently large that for all $j=1, 2, ...$, the ratio $\frac{c_{i'_1}^{(A)} \cdots c^{(A)}_{i'_1+j}}{n_{i'_1}^{(A)} \cdots n^{(A)}_{i'_1+j}}$ is sufficiently close to $1$ that
\begin{equation}\label{small-c-c-ratio} 
 \frac{c_{i'_1}^{(A)} \cdots c^{(A)}_{i'_1+j}}{n_{i'_1}^{(A)} \cdots n^{(A)}_{i'_1+j}}( ( \frac{n_{i'_1}^{(A)} \cdots n^{(A)}_{i'_1+j}}{c_{i'_1}^{(A)} \cdots c^{(A)}_{i'_1+j}} - 1) +\frac{\delta_1^2}{6}) < \frac{\delta_1^2}{3}
\end{equation}
and
\begin{equation}\label{almost-m-1-A}
 \frac{\abs{\{s_k: s_k=1,: k=1, ..., c^{(A)}_{i'_1}\cdots c^{(A)}_{i'_1+j}\}}}{n^{(A)}_{i'_1}\cdots n^{(A)}_{i'_1+j}} > 1-\frac{\delta_1^2}{12},
\end{equation}
where
$$\phi^{(A)}_{i'_1, i'_1+j} = \mathrm{diag}\{ \underbrace{\underbrace{\pi_1^*, ..., \pi_1^*}_{s_1}, ..., \underbrace{\pi_{c^{(A)}_{i_1'}\cdots c^{(A)}_{i_1'+j}}^*, ..., \pi_{c^{(A)}_{i_1'}\cdots c^{(A)}_{i_1'+j}}^*}_{s_{c^{(A)}_{i_1'}\cdots c^{(A)}_{i_1'+j}}}}_{n^{(A)}_{i_1'}\cdots n^{(A)}_{i'_1+j}},\  \mathrm{point\  evaluations}\}.$$

Then, pick $\eps'>0$ such that
\begin{equation}\label{close-to-gamma-0}
 \frac{n^{(A)}_0(n_1^{(A)} + k_1^{(A)}) \cdots (n_{i_1'-1}^{(A)} + k_{i_1'-1}^{(A)}) } {c_1^{(A)}\cdots c_{i'_1-1}^{(A)}} < \frac{1}{\gamma} -\eps',
 \end{equation}
and pick $\eps''>0$ such that
\begin{equation}\label{small-prod}
(\frac{1}{\gamma} - \eps')(\gamma + \eps'') < 1.
\end{equation}

By \eqref{super-n}, there are $i_1>i_1'$ such that
\begin{equation}\label{divide-cond-n}
n^{(B)}_0\textrm{$\prod_{i=1}^{i_1-1}(n^{(B)}_i+k_{i}^{(B)})$ is divisible by $n^{(A)}_0\prod_{i=1}^{i'_1-1}(n^{(A)}_i+k_{i}^{(A)})$},
\end{equation} 
\begin{equation}\label{close-to-gamma-1}
\frac{c_1^{(B)}\cdots c_{i_1}^{(B)}} {n^{(B)}_0(n_1^{(B)} + k_1^{(B)}) \cdots (n_{i_1}^{(B)} + k_{i_1}^{(B)})} < \gamma + \eps''
\end{equation}
and
\begin{equation}\label{small-diff-pre-A-B}
\frac{c_1^{(A)} \cdots c_{i_1'-1}^{(A)}}{c_1^{(B)} \cdots c_{i_1-1}^{(B)}} < \frac{\delta_1^2}{12}.
\end{equation}
By Theorem \ref{cor-almost-1}, one may also assume that for all $j=1, 2, ...$,
\begin{equation}\label{pert-B-C}
\frac{n_{i_1}^{(B)} \cdots n^{(B)}_{i_1+j}}{c_{i_1}^{(B)} \cdots c^{(B)}_{i_1+j}} < 2
\end{equation}
and
\begin{equation}\label{almost-m-1-B}
 \frac{\abs{\{s_k: s_k=1,: k=1, ..., c^{(B)}_{i_1}\cdots c^{(B)}_{i_1+j}\}}}{n^{(B)}_{i_1}\cdots n^{(B)}_{i_1+j}} > 1-\frac{\delta_1^2}{12},
\end{equation}
where
$$\phi^{(B)}_{i_1, i_1+j} = \mathrm{diag}\{ \underbrace{\underbrace{\pi_1^*, ..., \pi_1^*}_{s_1}, ..., \underbrace{\pi_{c^{(B)}_{i_1}\cdots c^{(B)}_{i_1+j}}^*, ..., \pi_{c^{(B)}_{i_1}\cdots c^{(B)}_{i_1+j}}^*}_{s_{c^{(B)}_{i_1}\cdots c^{(B)}_{i_1+j}}}}_{n^{(B)}_{i_1}\cdots n^{(B)}_{i_1+j}},\  \mathrm{point\  evaluations}\}.$$

Pick $l_{i_1', i_1} \in \mathbb N$ such that 
  \begin{equation}\label{approx-div-A-B}
  0\leq c_1^{(B)} \cdots c_{i_1-1}^{(B)} -  (c_1^{(A)} \cdots c_{i_1'-1}^{(A)}) l_{i'_1, i_1} < c_1^{(A)} \cdots c_{i_1'-1}^{(A)}.  
  \end{equation}

Then
\begin{eqnarray}\label{enough-room-n}
&&  \frac{n^{(A)}_0(n_1^{(A)} + k_1^{(A)}) \cdots (n_{i_1'-1}^{(A)} + k_{i_1'-1}^{(A)}) }{n^{(B)}_0(n_1^{(B)} + k_1^{(B)}) \cdots (n_{i_1'-1}^{(B)} + k_{i_1'-1}^{(B)})} \cdot \frac{l_{i_1', i_1}}{(n_{i'_1}^{(B)} + k_{i'_1}^{(B)}) \cdots (n_{i_1}^{(B)} + k_{i_1}^{(B)})}  \\
&\leq &  \frac{n^{(A)}_0 (n_1^{(A)} + k_1^{(A)}) \cdots (n_{i_1'-1}^{(A)} + k_{i_1'-1}^{(A)}) }{n^{(B)}_0(n_1^{(B)} + k_1^{(B)}) \cdots (n_{i_1}^{(B)} + k_{i_1}^{(B)})}   \cdot \frac{c_1^{(B)}\cdots c_{i_1}^{(B)}}{c_1^{(A)}\cdots c_{i'_1-1}^{(A)}} \quad\quad\textrm{(by \eqref{approx-div-A-B})} \nonumber \\
& = & \frac{n^{(A)}_0(n_1^{(A)} + k_1^{(A)}) \cdots (n_{i_1'-1}^{(A)} + k_{i_1'-1}^{(A)}) } {c_1^{(A)}\cdots c_{i'_1-1}^{(A)}}    \cdot \frac{c_1^{(B)}\cdots c_{i_1}^{(B)}} {n^{(B)}_0(n_1^{(B)} + k_1^{(B)}) \cdots (n_{i_1}^{(B)} + k_{i_1}^{(B)})} \nonumber \\
& < & (\frac{1}{\gamma} - \eps')(\gamma +\eps'')<1 \quad\quad\textrm{(by \eqref{close-to-gamma-0}, \eqref{close-to-gamma-1}, and \eqref{small-prod})}. \nonumber
\end{eqnarray}

Then consider the diagram 
$$
\xymatrix{
\mathrm{M}_{n_0^{(A)}}(\mathrm{C}(X)) \ar[r]^-{\phi_{1, i'_1}^{(A)}}  & \mathrm{M}_{m^{(A)}_{i'_1}}(\mathrm{C}(X^{d^{(A)}_{i'_1}})) \ar[r]^-{\phi_{i'_1, i_1}^{(A)}} \ar[dr]^-{\tilde{\phi}_{i'_1, i_1}^{(A, B)}}  & \mathrm{M}_{m^{(A)}_{i_1}}(\mathrm{C}(X^{d^{(A)}_{i_1}})) \ar[r] & \cdots \ar[r] & A \\
\mathrm{M}_{n_0^{(B)}}(\mathrm{C}(X)) \ar[r]^-{\phi_{1, i'_1}^{(B)}}  & \mathrm{M}_{m^{(B)}_{i'_1}}(\mathrm{C}(X^{d^{(B)}_{i'_1}})) \ar[r]^-{\phi_{i'_1, i_1}^{(B)}}  & \mathrm{M}_{m^{(B)}_{i_1}}(\mathrm{C}(X^{d^{(B)}_{i_1}})) \ar[r] & \cdots \ar[r] & B,
}
$$
where 
\begin{equation*}
m_i:=n_0(n_1+k_1)\cdots (n_{i-1}+k_{i-1}), \quad d_i:=c_1\cdots c_{i-1}
\end{equation*}
and
$\tilde{\phi}_{i'_1, i_1}^{(A, B)}: \mathrm{M}_{m^{(A)}_{i'_1}}(\mathrm{C}(X^{d^{(A)}_{i'_1}})) \to \mathrm{M}_{m^{(B)}_{i_1}}(\mathrm{C}(X^{d^{(B)}_{i_1}}))$ is a map
$$f \mapsto \mathrm{diag}\{\underbrace{f\circ \pi_1, ..., f\circ\pi_{l_{i_1', i_1}}}_{l_{i'_1, i_1}}, \textrm{point evaluations}\},$$ where the point evaluations are arbitrarily chosen (the map $\tilde{\phi}_{i'_1, i_1}^{(A, B)}$ exists by \eqref{divide-cond-n} and \eqref{enough-room-n}) (the evaluation points chosen for the map $\tilde{\phi}_{i'_1, i_1}^{(A, B)}$ might not be suitably dense in $X^{d^{(A)}_{i'_1}}$, but the set of evaluation points of the map  $\tilde{\phi}_{i'_1, i_1}^{(A, B)} \circ \phi^{(A)}_{1, i_1'} $, which contains the set of evaluation points of the map $\phi^{(A)}_{1, i_1'}$, is suitably dense).

Write $$\phi_{1, i_1}^{(A, B)}=\tilde{\phi}_{i'_1, i_1}^{(A, B)} \circ \phi_{1, i'_1}^{(A)},$$ and compress the diagram above as
$$
\xymatrix{
\mathrm{M}_{n_0^{(A)}}(\mathrm{C}(X)) \ar[r]^-{\phi_{1, i_1}^{(A)}} \ar[dr]^-{\phi_{1, i_1}^{(A, B)}} & \mathrm{M}_{m^{(A)}_{i_1}}(\mathrm{C}(X^{d^{(A)}_{i_1}})) \ar[r]^-{\phi_{i_1}^{(A)}}  & \mathrm{M}_{m^{(A)}_{i_1+1}}(\mathrm{C}(X^{d^{(A)}_{i_1+1}})) \ar[r] & \cdots \ar[r] & A \\
\mathrm{M}_{n_0^{(B)}}(\mathrm{C}(X)) \ar[r]^-{\phi_{1, i_1}^{(B)}}  & \mathrm{M}_{m^{(B)}_{i_1}}(\mathrm{C}(X^{d^{(B)}_{i_1}})) \ar[r]^-{\phi_{i_1}^{(B)}}  & \mathrm{M}_{m^{(B)}_{i_1+1}}(\mathrm{C}(X^{d^{(B)}_{i_1+1}})) \ar[r] & \cdots \ar[r] & B.
}
$$
Note that the map $\phi_{i, i_1}^{(A, B)}$ is given by
$$f \mapsto \mathrm{diag}\{\underbrace{f\circ \pi_1, ..., f\circ\pi_{(n_1^{(A)} \cdots n_{i_1'-1}^{(A)}) l_{i_1', i_1}}}_{(n_1^{(A)} \cdots n_{i_1'-1}^{(A)}) l_{i_1', i_1}}, \textrm{point evaluations}\}.$$

%Consider the set $\mathcal H_{i_1}^{(B)}$, and, 
Without loss of generality, let us assume that
\begin{equation*}
\delta_2 < \frac{k^{(B)}_{i_1}}{n^{(B)}_{i_1} + k^{(B)}_{i_1}} \quad\mathrm{and}\quad \frac{\frac{3}{4}\delta_2}{1-\frac{3}{4}\delta_2} < \delta_2 <1.
\end{equation*} 
The same argument as above shows that there are $i_2 > i'_2$ such that 
\begin{equation}\label{small-E-2-n}
1 - \prod_{j=0}^\infty \frac{n^{(B)}_{i'_2+j}}{n^{(B)}_{i'_2+j}+k^{(B)}_{i'_2+j}} < \delta_{2},
\end{equation}
\begin{equation}\label{small-c-c-ratio-2} 
\frac{c_{i'_2}^{(B)} \cdots c^{(B)}_{i'_2+j}}{n_{i'_2}^{(B)} \cdots n^{(B)}_{i'_2+j}}( (\frac{n_{i'_2}^{(B)} \cdots n^{(B)}_{i'_2+j}}{c_{i'_2}^{(B)} \cdots c^{(B)}_{i'_2+j}} - 1) +\frac{\delta_2^2}{6}) < \frac{\delta_2^2}{3},\quad j=1, 2, ... ,
\end{equation}
\begin{equation}\label{almost-m-1-A-B}
 \frac{\abs{\{s_k: s_k=1,: k=1, ..., c^{(B)}_{i'_2}\cdots c^{(B)}_{i'_2+j}\}}}{n^{(B)}_{i'_2}\cdots n^{(B)}_{i'_2+j}} > 1-\frac{\delta_2^2}{12},\quad j=1, 2, ...,
\end{equation}
where
$$\phi^{(B)}_{i'_2, i'_2+j} = \mathrm{diag}\{ \underbrace{\underbrace{\pi_1^*, ..., \pi_1^*}_{s_1}, ..., \underbrace{\pi_{c^{(B)}_{i_2'}\cdots c^{(B)}_{i_2'+j}}^*, ..., \pi_{c^{(B)}_{i_2'}\cdots c^{(B)}_{i_2'+j}}^*}_{s_{c^{(B)}_{i_2'}\cdots c^{(B)}_{i_2'+j}}}}_{n^{(B)}_{i_2'}\cdots n^{(B)}_{i'_2+j}},\  \mathrm{point\  evaluations}\},$$

\begin{equation}\label{almost-m-1-B-A}
 \frac{\abs{\{s_k: s_k=1,: k=1, ..., c^{(A)}_{i_2}\cdots c^{(A)}_{i_2+j}\}}}{n^{(A)}_{i_2}\cdots n^{(A)}_{i_2+j}} > 1-\frac{\delta_2^2}{12},\quad j=1, 2, ..., 
\end{equation}
where
$$\phi^{(A)}_{i_2, i_2+j} = \mathrm{diag}\{ \underbrace{\underbrace{\pi_1^*, ..., \pi_1^*}_{s_1}, ..., \underbrace{\pi_{c^{(A)}_{i_2}\cdots c^{(A)}_{i_2+j}}^*, ..., \pi_{c^{(A)}_{i_2}\cdots c^{(A)}_{i_2+j}}^*}_{s_{c^{(A)}_{i_2}\cdots c^{(A)}_{i_2+j}}}}_{n^{(A)}_{i_2}\cdots n^{(A)}_{i_2+j}},\  \mathrm{point\  evaluations}\},$$

\begin{equation}\label{divide-cond-n-2}
n_0^{(A)}\textrm{$\prod_{i=1}^{i_2-1}(n^{(A)}_i+k_{i}^{(A)})$ is divisible by $n_0^{(B)}\prod_{i=1}^{i'_2-1}(n^{(B)}_i+k_{i}^{(B)})$}, 
\end{equation} 
\begin{equation}\label{small-diff-pre-B-A}
\frac{c_{1}^{(B)} \cdots c_{i_2'-1}^{(B)}}{c_{1}^{(A)} \cdots c_{i_2}^{(A)}} < \frac{c_{i_1}^{(B)} \cdots c_{i_2'-1}^{(B)}}{c_{i_1}^{(A)} \cdots c_{i_2-1}^{(A)}} < \frac{\delta_{2}^2}{12},
\end{equation}
and
\begin{equation}\label{enough-room-n-2}
\frac{n_0^{(B)}(n_1^{(B)} + k_1^{(B)}) \cdots (n_{i_2'-1}^{(B)} + k_{i_2'-1}^{(B)}) }{n_0^{(A)}(n_1^{(A)} + k_1^{(A)}) \cdots (n_{i_2'-1}^{(A)} + k_{i_2'-1}^{(A)})} \cdot \frac{l_{i_2', i_2}}{(n_{i'_1}^{(A)} + k_{i'_2}^{(A)}) \cdots (n_{i_2}^{(A)} + k_{i_2}^{(A)})} < 1,
\end{equation}
where
\begin{equation}\label{approx-div-B-A}
0\leq c_1^{(A)} \cdots c_{i_2-1}^{(A)} -  (c_1^{(B)} \cdots c_{i_2'-1}^{(B)}) l_{i'_2, i_2} < c_1^{(B)} \cdots c_{i_2'-1}^{(B)}.
\end{equation}

Consider the map $\tilde{\phi}_{i'_2, i_2}^{(B, A)}: \mathrm{M}_{m^{(B)}_{i'_2}}(\mathrm{C}(X^{d^{(B)}_{i'_2}})) \to \mathrm{M}_{m^{(A)}_{i_2}}(\mathrm{C}(X^{d^{(A)}_{i_1}}))$,
$$f \mapsto \mathrm{diag}\{\underbrace{f\circ \pi_1, ..., f\circ\pi_{l_{i_2', i_2}}}_{l_{i'_2, i_2}}, \textrm{point evaluations}\},$$ where the point evaluations are arbitrarily chosen (the map $\tilde{\phi}_{i'_2, i_2}^{(B, A)}$ exists by \eqref{divide-cond-n-2} and \eqref{enough-room-n-2}; cf.~above).
Define
$$\phi_{i_1, i_2}^{(B, A)}=\tilde{\phi}_{i'_2, i_2}^{(B, A)} \circ \phi_{i_1, i'_2}^{(B)}$$
and consider the augmented diagram 
$$
\xymatrix{
\mathrm{M}_{n_0^{(A)}}(\mathrm{C}(X)) \ar[r]^-{\phi_{1, i_1}^{(A)}} \ar[dr]^-{\phi_{1, i_1}^{(A, B)}} & \mathrm{M}_{m^{(A)}_{i_1}}(\mathrm{C}(X^{d^{(A)}_{i_1}})) \ar[r]^-{\phi_{i_1, i_2}^{(A)}}  & \mathrm{M}_{m^{(A)}_{i_2}}(\mathrm{C}(X^{d^{(A)}_{i_2}})) \ar[r] & \cdots \ar[r] & A \\
\mathrm{M}_{n_0^{(B)}}(\mathrm{C}(X)) \ar[r]^-{\phi_{1, i_1}^{(B)}}  & \mathrm{M}_{m^{(B)}_{i_1}}(\mathrm{C}(X^{d^{(B)}_{i_1}})) \ar[r]^-{\phi_{i_1, i_2}^{(B)}} \ar[ur]^-{\phi_{i_1, i_2}^{(B, A)}}  & \mathrm{M}_{m^{(B)}_{i_2}}(\mathrm{C}(X^{d^{(B)}_{i_2}})) \ar[r] & \cdots \ar[r] & B.
}
$$

It follows from \eqref{small-E-1-n} that
$$\abs{\tau(\phi^{(B, A)}_{i_{1}, i_{2}} \circ \phi^{(A, B)}_{1, i_{1}}(h) - \phi^{(A)}_{i_{1}, i_{2}} \circ \phi^{(A)}_{1, i_1}(h))} < \delta_1,\quad h\in \mathrm{M}_{n_0^{(A)}}(\mathrm{C}(X)),\ \norm{h}\leq 1.$$

Note that
$$\phi^{(B, A)}_{i_{1}, i_{2}} \circ \phi^{(A, B)}_{1, i_{1}} =(\tilde{\phi}_{i'_2, i_2}^{(B, A)} \circ \phi_{i_1, i'_2}^{(B)} \circ \tilde{\phi}_{i'_1, i_1}^{(A, B)}) \circ \phi_{1, i'_1}^{(A)}.$$
By \eqref{almost-m-1-B}, we have
$$\phi_{i_1, i'_2}^{(B)} = \mathrm{diag}\{\underbrace{\pi^*_1, ..., \pi^*_{c_{i_1}^{(B)} \cdots c_{i_2'}^{(B)}}}_{ c_{i_1}^{(B)} \cdots c_{i_2'}^{(B)} }, Q'_0, \textrm{point evaluations} \},$$
where $Q'_0$ is a coordinate projection (possibly with multiplicity) with (in view of \eqref{pert-B-C})
$$\mathrm{rank}(Q'_0) \leq \frac{\delta_1^2}{12}(n_{i_1}^{(B)} \cdots n_{i'_2}^{(B)}) < \frac{\delta_1^2}{6}(c_{i_1}^{(B)} \cdots c_{i'_2}^{(B)}).$$
Hence, 
$$\tilde{\phi}_{i'_2, i_2}^{(B, A)} \circ \phi_{i_1, i'_2}^{(B)} \circ \tilde{\phi}_{i'_1, i_1}^{(A, B)} = \mathrm{diag}\{\underbrace{\pi^*_{1}, ..., \pi^*_{l_{i_1', i_1} (c_{i_1}^{(B)} \cdots c_{i_2'}^{(B)}) l_{i_2', i_2}} }_{l_{i_1', i_1} (c_{i_1}^{(B)} \cdots c_{i_2'}^{(B)}) l_{i_2', i_2}},\ Q_0,\ \textrm{point evaluations} \},$$
where $Q_0$ is a coordinate projection (possibly with multiplicity) with
\begin{equation}\label{size-Q-0}
\mathrm{rank}(Q_0) \leq \frac{\delta_1^2}{6}l_{i_1', i_1}(c_{i_1}^{(B)} \cdots c_{i'_2}^{(B)})l_{i_2', i_2} ,
\end{equation}
and hence,
\begin{equation}\label{A-map-1-2}
\phi^{(B, A)}_{i_{1}, i_{2}} \circ \phi^{(A, B)}_{1, i_{1}} = \mathrm{diag}\{\underbrace{\pi^*_1 \circ P_{1, i_1'}^{(A)}, ..., \pi^*_{l_{i_1', i_1} (c_{i_1}^{(B)} \cdots c_{i_2'}^{(B)}) l_{i_2', i_2}}\circ P_{1, i_1'}^{(A)}}_{(n_1^{(A)} \cdots n_{i_1'-1}^{(A)})l_{i_1', i_1} (c_{i_1}^{(B)} \cdots c_{i_2'}^{(B)}) l_{i_2', i_2}},\  Q_0\circ P_{1, i_1'}^{(A)},\ \tilde{\Theta}'_1\},
\end{equation}
where $P_{1, i_1'}^{(A)}$ is the coordinate projection part of the map $\phi_{1, i_1'}^{(A)}$ and $\tilde{\Theta}_1'$ is a point-evaluation map.
Also note that, by \eqref{almost-m-1-A} (and note that the multiplicities of coordinate projections are non-zero), 
$$\phi_{i_1', i_2}^{(A)} = \mathrm{diag}\{\pi^*_1, ..., \pi^*_{c_{i_1'}^{(A)} \cdots c_{i_2}^{(B)}},\ Q_1,\ \textrm{point evaluations}\},$$
where  $Q_1$ is a coordinate projection (possibly with multiplicity) with
$$\mathrm{rank}(Q_1) \leq \frac{\delta_1^2}{12}(n_{i'_1}^{(A)} \cdots n_{i_2}^{(A)}).$$
We then have
\begin{equation}\label{map-1-2}
 \phi_{1, i_2}^{(A)}  =  \mathrm{diag}\{\underbrace{\pi^*_1\circ P_{1, i_1'}^{(A)}, ...,  \pi^*_{c_{i'_1}^{(A)} \cdots c_{i_2}^{(A)}}\circ P_{1, i_1'}^{(A)}}_{(n_1^{(A)} \cdots n_{i_1'-1}^{(A)}) (c_{i_1'}^{(A)}\cdots  c_{i_2}^{(A)})},\ Q_1\circ P_{1, i_1'}^{(A)},\ \tilde{\Theta}_1''\},
 \end{equation}
 where $\tilde{\Theta}_1''$ is a point-evaluation map.

Let us compare $ \phi^{(B, A)}_{i_{1}, i_{2}} \circ \phi^{(A, B)}_{1, i_{1}} $ (\eqref{A-map-1-2}) with $\phi_{1, i_2}^{(A)} $ (\eqref{map-1-2}). 
Note that

\begin{eqnarray}\label{small-cl}
& & (n_1^{(A)}\cdots n_{i_1'-1}^{(A)} ) l_{i_1', i_1}(c_{i'_1}^{(B)} \cdots c_{i'_2}^{(B)})l_{i_2', i_2} \\
& \leq & (n_1^{(A)}\cdots n_{i_1'-1}^{(A)} ) \frac{c_1^{(B)} \cdots c_{i_1-1}^{(B)}}{c_1^{(A)} \cdots c_{i'_1-1}^{(A)}} (c_{i'_1}^{(B)} \cdots c_{i'_2}^{(B)}) \frac{c_1^{(A)} \cdots c_{i_2-1}^{(A)}}{c_1^{(B)} \cdots c_{i'_2-1}^{(B)}} .  \quad\quad\textrm{(by \eqref{approx-div-A-B} and \eqref{approx-div-B-A})} \nonumber \\
& = & n_1^{(A)}\cdots n_{i_1'-1}^{(A)}  c_{i_1'}^{(A)} \cdots c_{i_2-1}^{(A)} \nonumber \\
& \leq & n_1^{(A)}\cdots n_{i_1'-1}^{(A)}  n_{i_1'}^{(A)} \cdots n_{i_2-1}^{(A)} = n_1^{(A)}\cdots n_{i_2-1}^{(A)}. \nonumber
\end{eqnarray}

Also note that
\begin{eqnarray}\label{pre-small-d-m}
& & c_{1}^{(A)}\cdots c_{i_2}^{(A)} - (c_1^{(A)} \cdots c_{i_1'-1}^{(A)})l_{i'_1, i_1} (c_{i_1}^{(B)} \cdots c_{i_2'}^{(B)}) l_{i_2', i_2} \\
& = & c_1^{(A)} \cdots c_{i_2}^{(A)} - ((c_1^{(A)} \cdots c_{i_1'-1}^{(A)}) l_{i'_1, i_1})(c_{i_1}^{(B)} \cdots c_{i_2'}^{(B)}) l_{i_2', i_2} \nonumber \\
& \leq & c_{1}^{(A)} \cdots c_{i_2}^{(A)} - (c_1^{(B)} \cdots c_{i_1-1}^{(B)}) (c_{i_1}^{(B)} \cdots c_{i_2'}^{(B)}) l_{i_2', i_2} + (c_1^{(A)} \cdots c_{i_1'-1}^{(A)}) (c_{i_1}^{(B)} \cdots c_{i_2'}^{(B)}) l_{i_2', i_2} \quad \textrm{ (\eqref{approx-div-A-B})}  \nonumber \\
&= & c_1^{(A)} \cdots c_{i_2}^{(A)} - (c_1^{(B)} \cdots c_{i_2'-1}^{(B)}) l_{i_2', i_2} + (c_1^{(A)} \cdots c_{i_1'-1}^{(A)}) (c_{i_1}^{(B)} \cdots c_{i_2'-1}^{(B)}) l_{i_2', i_2} \nonumber \\
& \leq & c_1^{(B)} \cdots c_{i_2'-1}^{(B)} + (c_1^{(A)} \cdots c_{i_1'-1}^{(A)}) (c_{i_1}^{(B)} \cdots c_{i_2'-1}^{(B)}) l_{i_2', i_2} \quad\quad \textrm{(by \eqref{approx-div-B-A})}  \nonumber \\
& \leq & c_1^{(B)} \cdots c_{i_2'-1}^{(B)} + (c_1^{(A)} \cdots c_{i_1'-1}^{(A)}) (c_{i_1}^{(B)} \cdots c_{i_2'-1}^{(B)}) \frac{c_1^{(A)} \cdots c_{i_2}^{(A)}}{c_1^{(B)} \cdots c_{i_2'-1}^{(B)}} \quad\quad \textrm{(by \eqref{approx-div-B-A})}  \nonumber \\
& = & c_1^{(B)} \cdots c_{i_2'-1}^{(B)} + (c_1^{(A)} \cdots c_{i_1'-1}^{(A)}) \frac{c_1^{(A)} \cdots c_{i_2}^{(A)}}{c_1^{(B)} \cdots c_{i_1-1}^{(B)}} \nonumber \\
& = & c_1^{(B)} \cdots c_{i_2'-1}^{(B)} + \frac{c_1^{(A)} \cdots c_{i_1'-1}^{(A)}}{c_1^{(B)} \cdots c_{i_1-1}^{(B)}} ({c_1^{(A)} \cdots c_{i_2}^{(A)}}) \nonumber \\
& \leq & \frac{\delta_1^2}{6} ({c_1^{(A)} \cdots c_{i_2}^{(A)}}) \quad\quad \textrm{(by \eqref{small-diff-pre-B-A}, \eqref{small-diff-pre-A-B}),} \nonumber
\end{eqnarray}
and hence,
\begin{eqnarray}\label{small-d-m}
&& \frac{1}{n_{1}^{(A)}\cdots n_{i_2}^{(A)}  }( n_{1}^{(A)}\cdots n_{i_2}^{(A)}  - (n_1^{(A)} \cdots n_{i_1'-1}^{(A)}) l_{i'_1, i_1} (c_{i_1}^{(B)} \cdots c_{i_2'}^{(B)}) l_{i_2', i_2}) \\
& = & 1- \frac{n_1^{(A)} \cdots n_{i_1'-1}^{(A)}}{n_{1}^{(A)}\cdots n_{i_2}^{(A)}  }\cdot l_{i'_1, i_1} (c_{i_1}^{(B)} \cdots c_{i_2'}^{(B)}) l_{i_2', i_2} \nonumber \\
& = & 1-  (\frac{n_1^{(A)} \cdots n_{i_1'-1}^{(A)}}{n_{1}^{(A)}\cdots n_{i_2}^{(A)}  } \cdot \frac{c_1^{(A)} \cdots c_{i_2}^{(A)}}{c_{1}^{(A)}\cdots c_{i'_1-1}^{(A)}  }) \cdot \frac{c_1^{(A)} \cdots c_{i_1'-1}^{(A)}}{c_{1}^{(A)}\cdots c_{i_2}^{(A)}  }l_{i'_1, i_1} (c_{i_1}^{(B)} \cdots c_{i_2'}^{(B)}) l_{i_2', i_2} \nonumber  \\
& = & 1-  (\frac{n_1^{(A)} \cdots n_{i_1'-1}^{(A)}}{c_{1}^{(A)}\cdots c_{i_1'-1}^{(A)}  } \cdot \frac{c_1^{(A)} \cdots c_{i_2}^{(A)}}{n_{1}^{(A)}\cdots n_{i_2}^{(A)}  }) \cdot \frac{c_1^{(A)} \cdots c_{i_1'-1}^{(A)}}{c_{1}^{(A)}\cdots c_{i_2}^{(A)}  }l_{i'_1, i_1} (c_{i_1}^{(B)} \cdots c_{i_2'}^{(B)}) l_{i_2', i_2} \nonumber  \\
& = & 1-  (\frac{c_{i_1'}^{(A)} \cdots c_{i_2}^{(A)}}{n_{i_1'}^{(A)}\cdots n_{i_2}^{(A)}  }) \cdot \frac{c_1^{(A)} \cdots c_{i_1'-1}^{(A)}}{c_{1}^{(A)}\cdots c_{i_2}^{(A)}  }l_{i'_1, i_1} (c_{i_1}^{(B)} \cdots c_{i_2'}^{(B)}) l_{i_2', i_2} \nonumber  \\
&= & \frac{c_{i_1'}^{(A)} \cdots c_{i_2}^{(A)}}{n_{i_1'}^{(A)}\cdots n_{i_2}^{(A)}  } ((\frac{n_{i_1'}^{(A)} \cdots n_{i_2}^{(A)}}{c_{i_1'}^{(A)}\cdots c_{i_2}^{(A)}  } - 1) +(1-  \frac{c_1^{(A)} \cdots c_{i_1'-1}^{(A)}}{c_{1}^{(A)}\cdots c_{i_2}^{(A)}  }l_{i'_1, i_1} (c_{i_1}^{(B)} \cdots c_{i_2'}^{(B)}) l_{i_2', i_2})) \nonumber  \\
& \leq & \frac{c_{i_1'}^{(A)} \cdots c_{i_2}^{(A)}}{n_{i_1'}^{(A)}\cdots n_{i_2}^{(A)}  } ((\frac{n_{i_1'}^{(A)} \cdots n_{i_2}^{(A)}}{c_{i_1'}^{(A)}\cdots c_{i_2}^{(A)}  } - 1) +\frac{\delta_1^2}{6}) \quad\quad \textrm{(by \eqref{pre-small-d-m})}   \nonumber \\
&< &\frac{\delta_1^2}{3} \quad\quad \textrm{(by \eqref{small-c-c-ratio}).} \nonumber 
\end{eqnarray}

Then,
\begin{eqnarray*}
&& (n_1^{(A)} \cdots n_{i_1'-1}^{(A)}) (c_{i_1'}^{(A)}\cdots  c_{i_2}^{(A)}) - (n_1^{(A)} \cdots n_{i_1'-1}^{(A)})l_{i_1', i_1} (c_{i_1}^{(B)} \cdots c_{i_2'}^{(B)}) l_{i_2', i_2} \\
& \leq & (n_1^{(A)} \cdots n_{i_1'-1}^{(A)}) (n_{i_1'}^{(A)}\cdots  n_{i_2}^{(A)}) - (n_1^{(A)} \cdots n_{i_1'-1}^{(A)})l_{i_1', i_1} (c_{i_1}^{(B)} \cdots c_{i_2'}^{(B)}) l_{i_2', i_2} \\
&\leq & \frac{\delta_1^2}{3}(n_{1}^{(A)}\cdots n_{i_2}^{(A)} ), 
\end{eqnarray*}
and hence,
\begin{equation}
(c_{i_1'}^{(A)}\cdots  c_{i_2}^{(A)}) -l_{i_1', i_1} (c_{i_1}^{(B)} \cdots c_{i_2'}^{(B)}) l_{i_2', i_2} \leq  \frac{\delta_1^2}{3} \cdot \frac{n_{1}^{(A)}\cdots n_{i_2}^{(A)} }{n_1^{(A)} \cdots n_{i_1'-1}^{(A)}}. 
\end{equation}

Write
$$ 
P_1 = \mathrm{diag}\{\underbrace{\pi^*_1\circ P_{1, i_1'}^{(A)}, ..., \pi^*_{l_{i_1', i_1} (c_{i_1}^{(B)} \cdots c_{i_2'}^{(B)}) l_{i_2', i_2}}\circ P_{1, i_1'}^{(A)}}_{(n_1^{(A)} \cdots n_{i_1'-1}^{(A)})l_{i_1', i_1} (c_{i_1}^{(B)} \cdots c_{i_2'}^{(B)}) l_{i_2', i_2}}\}.
$$
Note that
\begin{eqnarray*}
&& \abs{\mathrm{rank}(\tilde{\Theta}_1')- \mathrm{rank}(\tilde{\Theta}_1'') } \\
&\leq & (n_1^{(A)}\cdots n_{i_1'-1}^{(A)} )( (c_{i_1'}^{(A)}\cdots  c_{i_2}^{(A)}) -l_{i_1', i_1} (c_{i_1}^{(B)} \cdots c_{i_2'}^{(B)}) l_{i_2', i_2}  + \mathrm{rank}(Q_0) + \mathrm{rank}(Q_1)) \\
&\leq &(n_1^{(A)}\cdots n_{i_1'-1}^{(A)} )(\frac{\delta_1^2}{3} \cdot \frac{n_{1}^{(A)}\cdots n_{i_2}^{(A)} }{n_1^{(A)} \cdots n_{i_1'-1}^{(A)}} + \frac{\delta_1^2}{6} l_{i_1', i_1}(c_{i'_1}^{(B)} \cdots c_{i'_2}^{(B)})l_{i_2', i_2} + \frac{\delta_1^2}{12}(n_{i'_1}^{(A)} \cdots n_{i_2}^{(A)})) \\
&= &\frac{5}{12}\delta_1^2 ( n_{1}^{(A)}\cdots n_{i_2}^{(A)}  ) + \frac{\delta_1^2}{6}(n_1^{(A)}\cdots n_{i_1'-1}^{(A)} ) l_{i_1', i_1}(c_{i'_1}^{(B)} \cdots c_{i'_2}^{(B)})l_{i_2', i_2} \\
& < & \frac{5}{12}\delta_1^2 ( n_{1}^{(A)}\cdots n_{i_2}^{(A)}  ) + \frac{\delta_1^2}{6}(n_1^{(A)}\cdots n_{i_2}^{(A)})\quad\quad \textrm{by \eqref{small-cl}}\\
&= &\frac{7}{12}\delta_1^2  ( n_{1}^{(A)}\cdots n_{i_2}^{(A)}  ). 
\end{eqnarray*}
Then, $\tilde{\Theta}_1'$ and $\tilde{\Theta}_1''$ can be decomposed as 
$$ \tilde{\Theta}_1' = \tilde{R}_1'  \oplus \Theta'_1 \quad \mathrm{and} \quad  \tilde{\Theta}_1'' = \tilde{R}_1'' \oplus \Theta''_1,$$
with 
\begin{equation}\label{size-theta}
\mathrm{rank}(\Theta'_1) = \mathrm{rank}(\Theta''_1) \quad \mathrm{and}\quad \max\{\mathrm{rank}(\tilde{R}_1'), \mathrm{rank}(\tilde{R}_1'')\} \leq \frac{7}{12}{\delta_1^2}  ( n_{1}^{(A)}\cdots n_{i_2}^{(A)}). 
\end{equation}

Define
$$
R_1'=\mathrm{diag}\{P_{1, i_1'}^{(A)}\circ Q_0,\ \tilde{R}'_1\}
$$
and
$$
R_1'' = \mathrm{diag}\{\pi^*_{{l_{i_1', i_1} (c_{i_1}^{(B)} \cdots c_{i_2'}^{(B)}) l_{i_2', i_2}}+1}\circ P_{1, i_1'}^{(A)}, ..., \pi^*_{c_{i'_1}^{(A)} \cdots c_{i_2}^{(A)}}\circ P_{1, i_1'}^{(A)},\ Q_1\circ P_{1, i_1'}^{(A)},\ \tilde{R}_1''\}.
$$
Then we have
$$\phi^{(B, A)}_{i_1, i_2} \circ \phi^{(A, B)}_{1, i_1} =\mathrm{diag}\{P_1, R'_1, \Theta'_1 \} $$ 
and
$$\phi^{(A)}_{i_1, i_2} \circ \phi^{(A)}_{1, i_1} = \mathrm{diag}\{P_1, R''_1,  \Theta_1''\}, $$
with (by \eqref{size-Q-0}, \eqref{size-theta}, and \eqref{small-cl})
\begin{eqnarray}\label{small-rank-leftover}
\mathrm{rank}(R_1'') = \mathrm{rank}(R_1') & < & \frac{\delta_1^2}{6} (n_1^{(A)}\cdots n_{i_1'-1}^{(A)} ) l_{i_1', i_1}(c_{i_1}^{(B)} \cdots c_{i'_2}^{(B)})l_{i_2', i_2} + \frac{7}{12}{\delta_1^2}  ( n_{1}^{(A)}\cdots n_{i_2}^{(A)}) \nonumber \\
& < & \frac{3\delta_1^2}{4}( n_{1}^{(A)}\cdots n_{i_2}^{(A)} ).
\end{eqnarray}

Note that $\mathrm{rank}(\tilde{\Theta}_1'')$---the number of the point evaluations appearing in $\phi_{i_1, i_2}^{(A)} \circ \phi_{1, i_1}^{(A)}$---is at least
$$ \frac{k^{(A)}_1}{n^{(A)}_1 + k^{(A)}_1}((n_1^{(A)}+ k_1^{(A)})\cdots (n_{i_2}^{(A)} + k_{i_2}^{(A)})),$$ and hence, by \eqref{delta-assumption}, is at least $$\delta_1 (n_1^{(A)}\cdots n_{i_2}^{(A)}).$$
It then follows from \eqref{small-rank-leftover} that
$$\mathrm{rank}(\Theta_1') = \mathrm{rank}(\Theta_1'') >  (\delta_1-\frac{3\delta_1^2}{4}) (n_1^{(A)}\cdots n_{i_2}^{(A)}), $$
and hence (by \eqref{small-rank-leftover} again), that 
$$\frac{ \mathrm{rank}(R'_1)} {\mathrm{rank}(\Theta'_1)}  = \frac{ \mathrm{rank}(R''_1)} {\mathrm{rank}(\Theta''_1)} < \frac{\frac{3}{4}\delta_1^2}{\delta_1-\frac{3}{4} \delta_1^2 } <  \delta_1.$$

Repeating this process, we have an intertwining diagram which is approximately commutative in the sense desired.
\end{proof}

\subsection{The isomorphism theorem}
First, we need the following stable uniqueness theorem, which certainly is well known to experts (see, for instance, \cite{DL-classification}, \cite{Lin-s-uniq}, and \cite{GL-almost-map}). For the reader's convenience, we provide a proof. 
\begin{thm}\label{stable-uniq}
Let $X$ be a K-contractible metrizable compact space (i.e., $\Kzero(\mathrm{C}(X)) = \Int$ and $\Kone(\mathrm{C}(X)) = \{0\}$), and let $\Delta: \mathrm{C}(X)^+ \to (0, +\infty)$ be a map which preserves order. For any finite set $\mathcal F\subseteq \mathrm{C}(X)$ and any $\eps>0$, there exists a finite set $\mathcal H\subseteq \mathrm{C}(X)^+$ with $\mathrm{supp}(h) \neq X$ for each $h\in\mathcal H$ and there exists $M\in \mathbb N$ such that the following property holds:
for any unital homomorphisms $$\phi, \psi: \mathrm{C}(X) \to \mathrm{M}_{n}(\mathrm{C}(Y))\quad \textrm{and}\quad \theta: \mathrm{C}(X) \to \mathrm{M}_{m}(\Comp) \subseteq \mathrm{M}_{m}(\mathrm{C}(Y)),$$
where $\theta$ is a unital point-evaluation map with $nM < m,$ and such that
$$\mathrm{tr}(\theta(h)) > \Delta(h),\quad h\in\mathcal H,$$
there is a unitary $u\in\mathrm{M}_{n+m}(\mathrm{C}(Y))$ such that
$$\norm{\mathrm{diag}\{\phi(a), \theta(a)\} - u^*\mathrm{diag}\{\psi(a), \theta(a)\} u} < \eps,\quad a\in\mathcal F.$$ 
\end{thm}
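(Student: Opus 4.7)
The plan is to adapt the Marriage-Lemma strategy of Theorem \ref{unique} to the present setting, in which $\phi$ and $\psi$ are arbitrary unital homomorphisms (no longer assumed to share a coordinate-projection part), with the K-contractibility of $X$ supplying the K-theoretic matching that was previously automatic.  First, fix a metric on $X$ and pick $\eta>0$ so that $\mathrm{dist}(x,y)<3\eta$ forces $\abs{f(x)-f(y)}<\eps$ for every $f\in\mathcal F$.  Choose a finite open cover $\mathcal U$ of $X$ of diameter at most $\eta$, let $\mathcal O$ denote the family of finite unions of members of $\mathcal U$, and for each $O\in\mathcal O$ with $O_\eta\ne X$ (nonempty by connectedness of $X$, which is implied by $\Kzero(\mathrm{C}(X))=\Int$) select a bump $g_O\in\mathrm{C}(X)^+$ with $g_O\le 1$ and $\mathrm{supp}(g_O)\subseteq O_{2\eta}\setminus O_\eta$.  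Set $\mathcal H:=\{g_O:O\in\mathcal O,\ O_\eta\ne X\}$, $\delta:=\min_O\Delta(g_O)>0$, and $M:=\lceil 1/\delta\rceil$.

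At each $y\in Y$ the homomorphism $\phi_y$ has spectrum a multiset $A_y=\{a_1(y),\dots,a_n(y)\}\subseteq X$, similarly $\psi_y$ gives $B_y$, while $\theta$ corresponds to a fixed multiset $T=\{x_1,\dots,x_m\}$.  For the pointwise Marriage-Lemma step, given any sub-multiset $\tilde X\subseteq A_y\cup T$, let $O\in\mathcal O$ be the minimal union of members of $\mathcal U$ containing $\tilde X$.  If $O_\eta\ne X$, the hypothesis $\mathrm{tr}(\theta(g_O))>\Delta(g_O)$ yields $\sum_j g_O(x_j)>m\delta>n$; since $g_O\le 1$ and $\mathrm{supp}(g_O)\subseteq O_{2\eta}\setminus O_\eta$ is disjoint from $O\supseteq\tilde X$, this produces strictly more than $n$ new values of $j$ with $x_j\in\tilde X_{3\eta}\setminus\tilde X$.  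Hence $\abs{\tilde X_{3\eta}\cap(B_y\cup T)}\ge\abs{\tilde X_{3\eta}\cap T}\ge\abs{\tilde X\cap T}+n\ge\abs{\tilde X}$, independently of $B_y$ (the case $O_\eta=X$ is trivial, and the symmetric inequality follows identically).  The Marriage Lemma then supplies, at each $y\in Y$, a bijection $\sigma_y\colon A_y\cup T\to B_y\cup T$ with $\mathrm{dist}(z,\sigma_y(z))<3\eta$.

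The main obstacle is assembling the pointwise matchings $\sigma_y$ into a single continuous unitary $u\in\mathrm{M}_{n+m}(\mathrm{C}(Y))$; this is the step that genuinely requires K-contractibility.  Since $\Kzero(\mathrm{C}(X))=\Int$ and $\Kone(\mathrm{C}(X))=\{0\}$, the homomorphisms $\phi\oplus\theta$ and $\psi\oplus\theta$ induce identical KK-theoretic data, so by the standard stable-uniqueness machinery (cf.~\cite{DL-classification}, \cite{Lin-s-uniq}, \cite{GL-almost-map}) the pointwise unitary equivalence lifts to a genuinely continuous one once the trivial part coming from $\theta$ has sufficient multiplicity.  Concretely, a partition-of-unity argument on $Y$, subordinate to a cover on which $\phi$ and $\psi$ are each approximated to within $\eps/2$ by direct sums of point-evaluations $\hat\phi_j,\hat\psi_j\colon\mathrm{C}(X)\to\mathrm{M}_n(\Comp)$, patches the $\sigma_y$ into a unitary $u$, using the bulk $m-n$ of reserve $\theta$-slots to absorb the transition cocycles over overlaps.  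The displacement bound $\mathrm{dist}(z,\sigma_y(z))<3\eta$ then yields $\norm{\mathrm{diag}\{\phi(a),\theta(a)\}-u^*\mathrm{diag}\{\psi(a),\theta(a)\}u}<\eps$ for $a\in\mathcal F$ from the choice of $\eta$.
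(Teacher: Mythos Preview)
Your pointwise Marriage-Lemma step is fine, but it does not lead anywhere: the whole content of the theorem is in the step you flag as ``the main obstacle,'' and your treatment of that step is either circular or does not work.

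The constants $\mathcal H$ and $M$ must be chosen \emph{before} $Y$ is seen, and must work for every compact metrizable $Y$ at once. A partition-of-unity patching on $Y$ cannot achieve this: the number of patches, and hence the number of reserve $\theta$-slots needed to absorb the transition cocycles, grows with the covering dimension of $Y$, which is unbounded. Your sentence ``by the standard stable-uniqueness machinery \dots\ the pointwise unitary equivalence lifts to a genuinely continuous one'' is the theorem itself; if you are prepared to invoke \cite{DL-classification} at that moment, the entire Marriage-Lemma apparatus is superfluous, since Dadarlat--Eilers already yields approximate unitary equivalence of $\phi\oplus\theta^{\oplus l}$ and $\psi\oplus\theta^{\oplus l}$ directly from $[\phi]=[\psi]$ in KK and fullness of $\theta$.

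The paper's proof is organized differently and never constructs anything over $Y$. It first proves a version (Lemma~\ref{pre-stable-uniq}) with $\theta$ of the \emph{same} size $n$ as $\phi,\psi$, repeated $M$ times, by contradiction: a putative sequence of counterexamples $(\phi_i,\psi_i,\theta_i)$ into $B_i=\mathrm{M}_{n_i}(\mathrm{C}(Y_i))$ is packaged into maps $\Phi,\Psi,\Theta\colon \mathrm{C}(X)\to\prod B_i/\bigoplus B_i$; K-contractibility plus the UCT gives $[\Phi]=[\Psi]$ in KK, and Theorems~2.22 and~4.5 of \cite{DL-classification} produce a single $l$ and a unitary in the corona which, once lifted, contradicts the $i$th counterexample for large $i$. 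Uniformity in $Y$ falls out of the corona construction for free. A second, elementary lemma (Lemma~\ref{div-M}) then reduces the stated form (one $\theta$ of size $m>nM$) to the $M$-fold form by approximately splitting the evaluation multiset of $\theta$ into $2M$ nearly equal pieces. Neither step uses the Marriage Lemma or any cover of $Y$.
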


The theorem follows from the following two lemmas.

\begin{lem}\label{pre-stable-uniq}
Let $X$ be a K-contractible metrizable compact space, and let $\Delta: \mathrm{C}(X)^+ \to (0, +\infty)$ be an order-preserving map. For any finite set $\mathcal F\subseteq \mathrm{C}(X)$ and any $\eps>0$, there exists a finite set $\mathcal H\subseteq \mathrm{C}(X)^+$ with $\mathrm{supp}(h) \neq X$ for each $h\in\mathcal H$ and there exists $M\in \mathbb N$ such that the following property holds:
for any unital homomorphisms $$\phi, \psi: \mathrm{C}(X) \to \mathrm{M}_{n}(\mathrm{C}(Y))\quad \textrm{and}\quad \theta: \mathrm{C}(X) \to \mathrm{M}_{n}(\Comp) \subseteq \mathrm{M}_{n}(\mathrm{C}(Y))$$
where $\theta$ is a unital point-evaluation map with 
$$\mathrm{tr}(\theta(h)) > \Delta(h),\quad h\in\mathcal H,$$
there is a unitary $u\in\mathrm{M}_{(1+M)n}(\mathrm{C}(Y))$ such that
$$\norm{\mathrm{diag}\{\phi(a), \underbrace{\theta(a), ..., \theta(a)}_M\} - u^*\mathrm{diag}\{\psi(a), \underbrace{\theta(a), ..., \theta(a)}_M \}u} < \eps,\quad a\in\mathcal F.$$ 
\end{lem}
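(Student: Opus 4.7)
The plan is to prove the stable uniqueness lemma by a fiberwise Marriage Lemma argument, in the spirit of the proof of Theorem \ref{unique}, followed by a continuous gluing step that uses the K-contractibility of $X$ to eliminate topological obstructions. The $M$ copies of the point evaluation $\theta$ serve both to enlarge the spectra of $\phi$ and $\psi$ with a common absorbing block and to provide enough K-theoretic slack for the gluing.

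First, I would set up the test functions $\mathcal H$ adapting the construction from the proof of Theorem \ref{unique}. Fix a metric on $X$, pick $\eta > 0$ such that each $f \in \mathcal F$ varies by at most $\eps/3$ over sets of diameter $3\eta$, and take a finite open cover $\{U_1, \ldots, U_L\}$ of $X$ by sets of diameter at most $\eta$. Let $\mathcal O$ be the collection of all unions of these $U_i$. For each $O \in \mathcal O$ with $O_\eta \neq X$ (nonempty because $\Kzero(\mathrm{C}(X)) = \Int$ forces $X$ to be connected, so $O_\eta$ cannot be a proper clopen subset), pick $g_O \in \mathrm{C}(X)^+$ with $g_O \leq 1$ and $\mathrm{supp}(g_O) \subseteq O_{2\eta}\setminus O_\eta$; set $\mathcal H := \{g_O\}$, each of which has $\mathrm{supp}(g_O) \neq X$ by construction. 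Take $M \in \mathbb N$ large enough that $M \cdot \min_O \Delta(g_O) > 1$.

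Second, I would run the Marriage Lemma fiberwise. At each $y \in Y$, after conjugation by inner unitaries, $\phi_y$ and $\psi_y$ are direct sums of point evaluations at multisets $A_y = \{a_{y,1}, \ldots, a_{y,n}\}$, $B_y = \{b_{y,1}, \ldots, b_{y,n}\} \subseteq X$, while $\theta$ has fixed multiset $C = \{z_1, \ldots, z_n\} \subseteq X$. Following the counting of Theorem \ref{unique}: for any subset $\tilde A \subseteq A_y$ cover it by $U_i$'s to form $O \in \mathcal O$ with $\tilde A \subseteq O$; the hypothesis $\mathrm{tr}(\theta(g_O)) > \Delta(g_O)$ forces enough points of $C$ to land in $O_{2\eta}$ that, after replication $M$ times, the Hall condition
$$\abs{T} \leq \abs{T_{3\eta} \cap (B_y \cup (M \cdot C))}$$
holds for every $T \subseteq A_y \cup (M \cdot C)$, with the symmetric bound after swapping the roles of $A_y$ and $B_y$, and where subsets drawn from the $M \cdot C$-reservoir match themselves trivially. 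The Marriage Lemma then produces a bijection $\sigma_y : A_y \cup (M \cdot C) \to B_y \cup (M \cdot C)$ moving every point by less than $3\eta$.

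The hard part will be the third step: globalizing the family $\{\sigma_y\}_{y \in Y}$ to a single unitary $u \in \mathrm{M}_{n(1+M)}(\mathrm{C}(Y))$ implementing $\phi \oplus \theta^{\oplus M} \approx_\eps u^*(\psi \oplus \theta^{\oplus M}) u$ on $\mathcal F$. Here the K-contractibility of $X$ enters in an essential way: the obstruction to matching spectral configurations continuously in $y$ lives in K-theoretic invariants of $\mathrm{C}(X) \otimes \mathrm{C}(Y)$, which by the K\"unneth theorem collapses to $\mathrm{K}_*(\mathrm{C}(Y))$ once $\Kzero(\mathrm{C}(X)) = \Int$ and $\Kone(\mathrm{C}(X)) = 0$, killing the line-bundle type obstruction coming from the $\mathrm{C}(X)$ direction. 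The $M$ absorbing copies of $\theta$ supply enough trivial common summands to stabilize away any residual obstruction, and a standard patching-over-a-finite-cover argument following the templates of \cite{DL-classification}, \cite{Lin-s-uniq}, and \cite{GL-almost-map} completes the construction. The final approximation bound $\eps$ then follows because matched points lie within $3\eta$ and each $f \in \mathcal F$ has modulus of continuity $\eps/3$ at that scale.
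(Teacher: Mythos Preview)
Your approach is quite different from the paper's, and it has a genuine gap in its most important step. The paper does not argue fiberwise at all: it argues by contradiction, produces sequences $\phi_i,\psi_i,\theta_i:\mathrm{C}(X)\to B_i$ violating the conclusion for every $M=i$, passes to $\Phi,\Psi,\Theta:\mathrm{C}(X)\to\prod B_i/\bigoplus B_i$, uses K-contractibility of $X$ and the UCT to conclude $[\Phi]=[\Psi]$ in $\mathrm{KK}(\mathrm{C}(X),\prod B_i/\bigoplus B_i)$, checks that $\Theta$ is a unital full embedding from the density hypothesis, and then invokes Theorems~2.22 and~4.5 of \cite{DL-classification} directly to obtain a unitary implementing the approximate equivalence for some fixed $l$, contradicting the choice of $\phi_i,\psi_i,\theta_i$ for $i\ge l$.

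Your step~2 is incomplete. The Marriage Lemma argument in Theorem~\ref{unique} depends essentially on the hypothesis $|\tau(\phi_0(h)-\phi_1(h))|<\delta$, which is absent here: there is no trace relation between $\phi$ and $\psi$, only a density condition on $\theta$. Your verification of the Hall condition treats only subsets $T\subseteq A_y$; the phrase ``subsets drawn from the $M\cdot C$-reservoir match themselves trivially'' does not handle mixed subsets $T=T_A\cup T_C$. Concretely, with $A_y=n\cdot\{a\}$ and $B_y=n\cdot\{b\}$ far apart, and $T$ equal to $A_y$ together with all $M\cdot C$-points within $3\eta$ of $a$, one needs $M$ times the number of $C$-points in the next $3\eta$-shell to exceed $n$; this is a transportation estimate along a chain in $X$, not the single-annulus count you wrote down, and your bound $M\min_O\Delta(g_O)>1$ is not obviously sufficient for it.

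More seriously, your step~3 is where all the content lies, and you have not supplied a proof. The fiberwise bijections $\sigma_y$ are non-canonical and vary discontinuously in $y$; producing a continuous unitary in $\mathrm{M}_{(1+M)n}(\mathrm{C}(Y))$ from them is exactly the problem the Dadarlat--Eilers stable uniqueness theorem solves. Calling this a ``standard patching-over-a-finite-cover argument following the templates of \cite{DL-classification}, \cite{Lin-s-uniq}, \cite{GL-almost-map}'' is circular: those templates \emph{are} the theorem you are trying to prove, not tools toward it. In the paper's argument, K-contractibility enters precisely through the KK-equality $[\Phi]=[\Psi]$, which is what makes the black-box invocation of \cite{DL-classification} legitimate; it does not enter as a K\"unneth collapse of fiberwise line-bundle obstructions.
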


\begin{proof}
Assume the statement were not true. Then there would be $(\mathcal F_0, \eps_0)$ such that for any finite set $\mathcal H\subseteq \mathrm{C}(X)^+$ and any $M$, there are unital homomorphisms $\phi, \psi, \theta: \mathrm{C}(X) \to  \mathrm{M}_n(\mathrm{C}(Y))$ for some $Y$ and $n$ with $\theta$ a unital point-evaluation map with $$\mathrm{tr}(\theta(h)) > \Delta(h),\quad h\in\mathcal H,$$ but $$\norm{\mathrm{diag}\{\phi(a), \underbrace{\theta(a), ..., \theta(a)}_M\} - u^*\mathrm{diag}\{\psi(a), \underbrace{\theta(a), ..., \theta(a)}_M \}u} \geq \eps,\quad a\in\mathcal F .$$

In particular, let $\mathcal H_i$, $i=1, 2, ...$, be an increasing sequence of finite sets with dense union in $\mathrm{C}(X)^+$, and $\mathrm{supp}(h) \neq X$ for each $h\in\mathcal H_i$, $i=1, 2, ...$. There are sequences of unital homomorphisms $\phi_i, \psi_i, \theta_i: \mathrm{C}(X) \to  \mathrm{M}_{n_i}(\mathrm{C}(Y_i))=:B_i$ for some $Y_i$ and $n_i$ with $\theta_i$ a unital point-evaluation map with 
\begin{equation}\label{fullness}
\mathrm{tr}(\theta_i(h)) > \Delta(h),\quad h\in\mathcal H_i,
\end{equation} 
but 
\begin{equation}\label{cont-assumption}
\norm{\mathrm{diag}\{\phi_i(a), \underbrace{\theta_i(a), ..., \theta_i(a)}_i\} - u^*\mathrm{diag}\{\psi_i(a), \underbrace{\theta_i(a), ..., \theta_i(a)}_i \}u} \geq \eps_0,\quad a\in\mathcal F_0,
\end{equation}
for all unitary $u \in \mathrm{M}_{1+i}(B_i)$.

Consider the three maps $\Phi:=(\phi_i), \Psi:=(\psi_i), \Theta:=(\theta_i): \mathrm{C}(X) \to \prod B_i/\bigoplus B_i$. 
Since $X$ is K-contractible, by the UCT (\cite{RS-UCT}), the C*-algebra $\mathrm{C}(X)$ is KK-equivalent to $\Comp$, and hence we have 
\begin{equation}\label{KK-eq}
[\Phi] = [\Psi]\quad\mathrm{in}\ \mathrm{KK}(\mathrm{C}(X), \prod B_i/\bigoplus B_i).
\end{equation}

By \eqref{fullness}, the map $\Theta$ is a unital full embedding (see Definition 2.8 of \cite{DL-classification}; we leave the verification to the reader), and then, by Theorem 2.22 together with Theorem 4.5 of \cite{DL-classification}, there exist $l\in\mathbb N$ and a unitary $u\in \mathrm{M}_{1+l}(\prod B_i/\bigoplus B_i)$ such that
$$\norm{\mathrm{diag}\{\Phi(a), \underbrace{\Theta(a), ..., \Theta(a)}_l\} - u^*\mathrm{diag}\{\Psi(a), \underbrace{\Theta(a), ..., \Theta(a)}_l \}u} < \eps_0,\quad a\in\mathcal F_0 .$$ Lifting $u$ to a unitary of $\prod B_i$ (the relations for a unitary are stable), one has a contradiction with \eqref{cont-assumption}.
\end{proof}

\begin{lem}\label{div-M}
Let $X$ be a compact metric space, and let $\Delta: \mathrm{C}(X)^+ \to (0, +\infty)$ be a density function (i.e., an order-preserving map). For any finite set $\mathcal F\subseteq \mathrm{C}(X)$, any $\eps>0$, and any $M\in \mathbb N$, there exist a finite set $\mathcal H\subseteq \mathrm{C}(X)^+$ and $L \in \mathbb N$, with $\mathrm{supp}(h) \neq X$ for each $h\in\mathcal H$, such that if $\theta: \mathrm{C}(X) \to \mathrm{M}_n(\Comp)$, where $n>L$, is a point-evaluation map with
$$\mathrm{tr}(\theta(h)) > \Delta(h),\quad h\in\mathcal H,$$ then there are unital homomorphisms $\theta_0: \mathrm{C}(X) \to \mathrm{M}_{n_0}(\Comp)$ and  $\theta_1: \mathrm{C}(X) \to \mathrm{M}_{n_1}(\Comp)$ for some $n_0, n_1$ with $n_0 +Mn_1 = n$ and a (permutation) unitary $u$ such that
$$\norm{ \theta(a) - u^* (\theta_0(h) \oplus \underbrace{\theta_1(a)\oplus \cdots \theta_1(a)}_M) u} < \eps,\quad a\in\mathcal F,$$
and
$$n_0 \leq n_1.$$
\end{lem}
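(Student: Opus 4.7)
The plan is a counting argument: partition the evaluation points of $\theta$ among a sufficiently fine open cover of $X$, and perform Euclidean division by $M$ on each resulting count.

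First, using uniform continuity of the finite set $\mathcal F$, I would choose a finite open cover $\mathcal U = \{U_1,\dots,U_K\}$ of $X$ with each $U_i$ of diameter small enough that $\abs{f(x)-f(y)}<\eps$ for all $f\in\mathcal F$ and $x,y\in U_i$; fix representatives $z_i\in U_i$, arranging $U_i\neq X$. Take $\mathcal H$ to be a family of positive bump functions subordinate to $\mathcal U$ (each with $\mathrm{supp}(h)\neq X$), and set $L:=M(M+1)K$ (plus a small enlargement for the unital edge case below). The fullness hypothesis $\mathrm{tr}(\theta(h))>\Delta(h)$ plays no essential role in producing the splitting itself; its purpose is to guarantee each $U_i$ receives a positive proportion of the evaluation points, so that the resulting $\theta_1$ is sufficiently full for downstream use in Theorem~\ref{stable-uniq}.

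Given $\theta$ with evaluation points $x_1,\dots,x_n$ and $n>L$, I would assign each $x_j$ to some $U_{i(j)}$ containing it, set $n_i:=\abs{\{j:i(j)=i\}}$, and let $\sigma$ be the permutation that groups the indices by their $U_i$. Replacing each $x_j$ by $z_{i(j)}$ perturbs $\theta(f)$ by at most $\eps$ in norm for each $f\in\mathcal F$, so after conjugation by the permutation unitary $\sigma$, the map $\theta$ is $\eps$-close on $\mathcal F$ to $\bigoplus_{i=1}^K \mathrm{ev}_{z_i}^{\oplus n_i}$. Writing $n_i=Mq_i+r_i$ with $0\leq r_i<M$, I would then define
\[
\theta_0 := \bigoplus_{i=1}^{K} \mathrm{ev}_{z_i}^{\oplus r_i}, \qquad \theta_1 := \bigoplus_{i=1}^{K} \mathrm{ev}_{z_i}^{\oplus q_i},
\]
of dimensions $n_0=\sum_i r_i$ and $n_1=\sum_i q_i$. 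One further permutation (absorbed into $u$) gathers, for each $i$, the $M$ identical copies of $\mathrm{ev}_{z_i}^{\oplus q_i}$ into the $M$-fold block $\theta_1^{\oplus M}$, yielding the desired approximate conjugacy $\norm{\theta(a) - u^*(\theta_0(a)\oplus\theta_1(a)^{\oplus M})u} < \eps$ for all $a\in\mathcal F$.

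The identity $n_0+Mn_1=\sum_i(r_i+Mq_i)=n$ is automatic. For $n_0\leq n_1$, note that $n_0<MK$ while $n_1=(n-n_0)/M > (L-MK)/M = MK > n_0$ as soon as $n>L$. The main bookkeeping obstacle is ensuring $n_0,n_1\geq 1$ so both are bona fide unital homomorphisms: $n_1\geq 1$ follows from the size of $n$, but $n_0=0$ can occur when $M\mid n$; in that case I would trade one $M$-block from $\theta_1$ into $\theta_0$ (so $n_0$ becomes $M$ and $n_1$ decreases by $1$), which preserves $n_0\leq n_1$ provided $L$ is enlarged slightly, to $M(M+1)K+M$, say. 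The argument is purely combinatorial---no analytic input is needed beyond uniform continuity of $\mathcal F$.
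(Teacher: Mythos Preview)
Your argument is correct and follows essentially the same route as the paper: approximate the evaluation points by a fixed finite net, group by the net points, perform Euclidean division by $M$ on each count, and reassemble via a permutation unitary. The one noteworthy difference is in how you obtain $n_0\leq n_1$. The paper uses the density hypothesis $\mathrm{tr}(\theta(h_i))>\Delta(h_i)$ to force each bin count $m_i>M^2+M$, so that $d_i>r_i$ holds \emph{bin by bin}; this is why their $L$ depends on $\min_i\Delta(h_i)$. You instead use only the global bound $n_0<MK$ together with $n>L=M(M+1)K$, which is a clean simplification and shows, as you observe, that the density hypothesis is not actually needed for the conclusion of the lemma as stated (it is there for the application in Theorem~\ref{stable-uniq}). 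Your handling of the degenerate case $n_0=0$ is more careful than the paper's; the paper implicitly allows $\theta_0$ to be the zero map to $\mathrm{M}_0(\mathbb C)$, which is harmless in the intended use.
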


\begin{proof}
Pick $\delta>0$ such that $\mathrm{dist}(x, y) < \delta$ implies $\abs{f(x) - f(y)} < \eps$ for all $f\in\mathcal F$. Pick a finite set $\{y_1, ..., y_s\}\subseteq X$ which is $\delta$-dense in $X$. Define $$\delta_0:=\min\{\mathrm{dist}(y_i, y_j): i\neq j, i, j =1, ..., s\},$$ and pick non-zero continuous functions $h_i: X \to [0, 1]$, $i=1, ..., s$, such that 
$$
h_i(x) = 0, \quad \mathrm{if}\ \mathrm{dist}(x, y_i) > \delta_0/2.
$$

Set $\mathcal H = \{h_1, ..., h_s\}$, and pick an integer $$L > \frac{M^2+M}{\min\{\Delta(h_i): i=1, ..., s\}}.$$ Then $\mathcal H$ and $L$ have the property of the lemma.

Indeed, let $\theta: \mathrm{C}(X)\to \mathrm{M}_n$ be a point-evaluation map satisfying 
\begin{equation}\label{dense-ev}
\mathrm{tr}(\theta(h)) > \Delta(h),\quad h\in\mathcal H.
\end{equation} 
Write $\{x_1, ..., x_n\}$ for the evaluation points of $\theta$. Then, choose a map $\sigma: \{x_1, ..., x_n\} \to \{y_1, ..., y_s\}$ such that
\begin{equation}\label{small-move}
\mathrm{dist}(x_i, \sigma(x_i)) < \delta,\quad i=1, ..., L,
\end{equation}
and for each $j=1, ..., s$,
$$ \sigma(x_i) = y_j\quad \textrm{if $\mathrm{dist}(x_i, y_j)<\delta_0/2$}.$$ Let $\theta': \mathrm{C}(X) \to \mathrm{M}_n(\Comp)$ denote  the point-evaluation map at the points $\sigma(x_1), ..., \sigma(x_n)$. Then, it follows from \eqref{small-move} and the choice of $\delta$ that
\begin{equation}\label{new-ev}
\norm{\theta(f) - \theta'(f)} < \eps,\quad f\in\mathcal F.
\end{equation} 
Up to a permutation, write $$\theta' = \mathrm{diag}\{ \underbrace{\mathrm{ev}_{y_1}, ..., \mathrm{ev}_{y_1}}_{m_1}, ..., \underbrace{\mathrm{ev}_{y_s}, ..., \mathrm{ev}_{y_s}}_{m_s}\}.$$
Note that
$$\mathrm{tr}(\theta)(h_i) \leq \frac{m_i}{n},\quad i=1, ..., s.$$
By \eqref{dense-ev}, we have 
$$m_i \geq n \Delta(h_i) \geq L\Delta(h_i) > M^2+M.$$ Then, write $m_i = Md_i +r_i$ with $0\leq r_i \leq M-1$, so that, in particular, $d_i> r_i$.  Write
$$\theta_0= \mathrm{diag}\{ \underbrace{\overbrace{\mathrm{ev}_{y_1}, ..., \mathrm{ev}_{y_1}}^{r_1}, 0, ..., 0}_{m_1}, ..., \underbrace{\overbrace{\mathrm{ev}_{y_s}, ..., \mathrm{ev}_{y_s}}^{r_s}, 0, ..., 0}_{m_s}\}$$
and
$$\theta_1= \mathrm{diag}\{\underbrace{\overbrace{0, ..., 0}^{r_1}, \overbrace{\mathrm{ev}_{y_1}, ..., \mathrm{ev}_{y_1}}^{d_1}, 0, ..., 0}_{m_1}, ..., \underbrace{\overbrace{0, ..., 0}^{r_s}, \overbrace{\mathrm{ev}_{y_s}, ..., \mathrm{ev}_{y_s}}^{d_s}, 0, ..., 0}_{m_s}\}.$$
A straightforward calculation shows that
$$ \theta'= \theta_0(h) \oplus \underbrace{\theta_1(a)\oplus \cdots \theta_1(a)}_M,$$
and the desired inequality is then just \eqref{new-ev}.
\end{proof}

\begin{proof}[Proof of Theorem \ref{stable-uniq}]
Applying Lemma \ref{pre-stable-uniq} to $(\mathcal F, \eps/2)$ with respect to the density function $\Delta/4$, we obtain $\mathcal H_1\subseteq \mathrm C(X)^+$ and $M_1$. Applying Lemma \ref{div-M} to $(\mathcal F \cup\mathcal H_1,\ \min\{\eps/4, \Delta(h)/4: h\in\mathcal H_1\})$ and $2M_1$ with respect to the density function $\Delta$, we obtain $\mathcal H_2$ and $L$. Then $\mathcal H:=\mathcal H_1\cup\mathcal H_2$ and $M:=2LM_1$ satisfy the condition of the theorem.

Indeed, let $\phi, \psi: \mathrm{C}(X) \to \mathrm{M}_n(\mathrm{C}(Y))$ and $\theta: \mathrm{C}(X) \to \mathrm{M}_m(\mathrm{C}(Y))$ be unital homomorphisms such that $\theta$ is a unital point-evaluation map with $n(2LM_1) < m$ (in particular, $L<m$ and $n< m/2M_1$), and 
$$\mathrm{tr}(\theta(h)) > \Delta(h),\quad h\in\mathcal H=\mathcal H_1\cup\mathcal H_2.$$
By Lemma \ref{div-M}, there are unital homomorphisms $\theta_0: \mathrm{C}(X)\to \mathrm{M}_{n_0}(\Comp)$ and
$\theta_1: \mathrm{C}(X)\to \mathrm{M}_{n_1}(\Comp)$ for some $n_0$, $n_1$ with $n_0+2M_1n_1 = m$ and a permutation unitary $u \in \mathrm{M}_m(\Comp)$ 
such that
$$\norm{ \theta(a) - u^* (\theta_0(h) \oplus \underbrace{\theta_1(a)\oplus \cdots \theta_1(a)}_{2M_1}) u} <  \min\{\eps/4, \Delta(h)/4: h\in\mathcal H_1\},\quad a\in\mathcal F \cup\mathcal H_1,$$
and
$$n_0 \leq n_1.$$
In particular,
$$ \norm{(1_n\oplus u)(\phi(a)\oplus\theta(a))(1_n\oplus u^*) - \phi(a)\oplus\theta_0(a)\oplus\underbrace{\theta_1(a)\oplus\cdots\oplus\theta_1(a)}_{2M_1} } < \frac{\eps}{4},\quad a\in \mathcal F$$
and
$$ \norm{(1_n\oplus u)(\psi(a)\oplus\theta(a))(1_n\oplus u^*) - \phi(a)\oplus\theta_0(a)\oplus\underbrace{\theta_1(a)\oplus\cdots\oplus\theta_1(a)}_{2M_1} } < \frac{\eps}{4},\quad a\in \mathcal F.$$

Also note that 
$$\mathrm{tr}(\theta_1(h)) > \frac{1}{4}\Delta(h),\quad h \in \mathcal H_1.$$
Now, consider the maps
$$(\phi \oplus \theta_0) \oplus (\underbrace{\theta_1 \oplus \cdots \oplus \theta_1}_{2M_1}) \quad \mathrm{and}\quad (\psi \oplus \theta_0) \oplus (\underbrace{\theta_1 \oplus \cdots \oplus \theta_1}_{2M_1}).$$
Then it follows from (the conclusion of) Lemma \ref{pre-stable-uniq} that there is a unitary $v \in \mathrm{M}_{n+m}(\mathrm{C}(Y))$ such that
$$ \norm{ (\phi(a) \oplus \theta_0(a)) \oplus (\underbrace{\theta_1(a) \oplus \cdots \oplus \theta_1(a)}_{2M_1}) - v^*((\psi(a) \oplus \theta_0(a)) \oplus (\underbrace{\theta_1(a) \oplus \cdots \oplus \theta_1(a)}_{2M_1}))v } < \frac{\eps}{2},\quad a\in \mathcal F.$$
Therefore
$$\norm{\phi(a) \oplus \theta(a) - (1_n\oplus u^*)v^*(1_n \oplus u)(\psi(a)\oplus \theta(a)) (1_n\oplus u)v(1_n \oplus u)} < \eps,\quad a\in\mathcal F,$$
as desired.
\end{proof}

\begin{proof}[Proof of Theorem \ref{n-theorm}]
If $\mathrm{rc}(A) = \mathrm{rc}(B) = 0$, then $A$ and $B$ are $\mathcal Z$-stable, and hence $A\cong B$ if, and only if, $(\Kzero(A), \mathrm{T}(A)) \cong (\Kzero(B), \mathrm{T}(B))$ (note that $\Kzero(A)$ and $\Kzero(B)$ have a unique state, and hence the pairing maps are automatically isomorphic if $\mathrm{T}(A) \cong \mathrm{T}(B)$). 

Now, let us assume $\mathrm{rc}(A) = \mathrm{rc}(B) \neq 0$. Since $X$ is solid,  by Theorem \ref{rcA}, 
$$
\frac{\mathrm{dim}(X)}{n_0^{(A)}}\prod_{i=1}^\infty \frac{c^{(A)}_i}{n^{(A)}_i + k^{(A)}_i} = \frac{\mathrm{dim}(X)}{n_0^{(B)}} \prod_{i=1}^\infty \frac{c^{(B)}_i}{n^{(B)}_i + k^{(B)}_i}.
$$
Since $\mathrm{dim}(X) < \infty$, both sides are finite non-zero numbers, and hence
$$
\frac{1}{n_0^{(A)}}\prod_{i=1}^\infty \frac{c^{(A)}_i}{n^{(A)}_i + k^{(A)}_i} = \frac{1}{n_0^{(B)}} \prod_{i=1}^\infty \frac{c^{(B)}_i}{n^{(B)}_i + k^{(B)}_i} \neq 0;
$$
that is,  \eqref{same-rc} of Lemma \ref{trace-twining-2} is satisfied. Note that \eqref{super-n} of Lemma \ref{trace-twining-2} follows from the assumption $\Kzero(A) \cong \Kzero(B)$.

Consider the inductive limit constructions
$$
\xymatrix{
\mathrm{M}_{n_0^{(A)}}(\mathrm{C}(X)) \ar[r]^-{\phi_{1}^{(A)}} & \mathrm{M}_{m_2^{(A)}}(\mathrm{C}(X^{d^{(A)}_2})) \ar[r]^-{\phi_{2}^{(A)}}  & \mathrm{M}_{m_3^{(A)}}(\mathrm{C}(X^{d^{(A)}_3})) \ar[r] & \cdots \ar[r] & A, \\
\mathrm{M}_{n_0^{(B)}}(\mathrm{C}(X)) \ar[r]^-{\phi_{1}^{(B)}}  & \mathrm{M}_{m_2^{(B)}}(\mathrm{C}(X^{d^{(B)}_2})) \ar[r]^-{\phi_{2}^{(F)}}  & \mathrm{M}_{m_3^{(B)}}(\mathrm{C}(X^{d^{(B)}_3})) \ar[r] & \cdots \ar[r] & B,
}
$$
where
$$m_i:=n_0(n_1+k_1)\cdots (n_{i-1}+k_{i-1}), \quad d_i:=c_1\cdots c_{i-1}.$$

Choose finite subsets $$\mathcal F^{(A)}_1\subseteq \mathrm{M}_{n_0^{(A)}}(C(X)), \mathcal F^{(A)}_2\subseteq \mathrm{M}_{n_0^{(A)}(n^{(A)}_1+k^{(A)}_1)}(\mathrm{C}(X^{n^{(A)}_1})), ... $$ 
and 
$$\mathcal F^{(B)}_1\subseteq \mathrm{M}_{n_0^{(B)}}(C(X)), \mathcal F^{(B)}_2\subseteq \mathrm{M}_{n_0^{(B)}(n^{(B)}_1+k^{(B)}_1)}(\mathrm{C}(X^{n_1^{(B)}})), ... $$ such that
$$\overline{\bigcup_{i=1}^\infty \mathcal F_i^{(A)}} = A \quad \mathrm{and}\quad \overline{\bigcup_{i=1}^\infty \mathcal F_i^{(B)}} = B.$$
Also choose
$\eps_1 > \eps_2 > \cdots > 0$
such that
$$\sum_{i=1}^\infty \eps_i \leq 1.$$

Since $A$ and $B$ are simple, we have the non-zero (order-preserving) density functions
$$\Delta_A(h) = \inf\{\tau(h): \tau\in\mathrm{T}(A)\}, \quad h\in A^+, $$ 
and  
$$ \Delta_B(h) = \inf\{\tau(h): \tau\in\mathrm{T}(B)\},\quad h\in B^+.$$

For $i=0, 2, ...$, applying Theorem \ref{unique} to $(\mathcal F_i^{(A)}, \eps_i/2)$ with respect to $\Delta_A/4$, we obtain finite sets $\mathcal H_{i, 0}^{(A)}, \mathcal H_{i, 1}^{(A)} \subseteq \mathrm{M}_{m^{(A)}_i}(\mathrm{C}(X^{d^{(A)}_i}))$ and $\delta^{(A)}_i>0$. Applying Theorem \ref{stable-uniq} to $(\mathcal F_i^{(A)}, \eps_i/2)$  with respect to $\Delta_A/2$, we obtain $M^{(A)}_i > 0$ and a finite set $\mathcal H_{i, 2}^{(A)} \subseteq \mathrm{M}_{m^{(A)}_i}(\mathrm{C}(X^{d^{(A)}_i}))$.

For $i=1, 3, ...$, applying Theorem \ref{unique} to $(\mathcal F_i^{(B)}, \eps_i/2)$ with respect to $\Delta_B/4$, we obtain finite sets $\mathcal H_{i, 0}^{(B)}, \mathcal H_{i, 1}^{(B)} \subseteq \mathrm{M}_{m^{(B)}_i}(\mathrm{C}(X^{d^{(B)}_i}))$ and $\delta^{(B)}_i>0$. Applying Theorem \ref{stable-uniq} to $(\mathcal F_i^{(B)}, \eps_i/2)$  with respect to $\Delta_B/2$, we obtain $M^{(B)}_i > 0$ and a finite set $\mathcal H_{i, 2}^{(B)} \subseteq \mathrm{M}_{m^{(B)}_i}(\mathrm{C}(X^{d^{(B)}_i}))$. 

For each $i=1, 2, ...$, set $$\mathcal H_i^{(A)} = \mathcal H_{i, 0}^{(A)} \cup \mathcal H_{i, 1}^{(A)} \cup \mathcal H_{i, 2}^{(A)},\quad \mathcal H_i^{(B)} = \mathcal H_{i, 0}^{(B)} \cup \mathcal H_{i, 1}^{(B)} \cup \mathcal H_{i, 2}^{(B)},$$
and
$$\delta_i = \min\{\frac{\delta_i^{(A)}}{2}, \frac{\delta_i^{(B)}}{2}, \frac{1}{M_i^{(A)}}, \frac{1}{M_i^{(B)}}, \frac{1}{4}\Delta_A(h_A), \frac{1}{4}\Delta_B(h_B): h_A\in \mathcal H_i^{(A)}, h_B\in\mathcal H_i^{(B)}\}.$$

After telescoping, we may assume that
\begin{equation}\label{bound-tr-A}
\tau(\phi^{(A)}_{i, i+1}(h)) > \Delta_A(h)/2,\quad h\in\mathcal H^{(A)}_{i},\  \tau\in\mathrm{T}(\mathrm{M}_{m_{i+1}^{(A)}}(\mathrm{C}(X^{d_{i+1}^{(A)}}))),\ i=0, 1, ... 
\end{equation}
and
\begin{equation}\label{bound-tr-B}
 \tau(\phi^{(B)}_{i, i+1}(h)) > \Delta_B(h)/2,\quad h\in\mathcal H^{(B)}_{i},\  \tau\in\mathrm{T}(\mathrm{M}_{m_{i+1}^{(B)}}(\mathrm{C}(X^{d_{i+1}^{(B)}}))),\ i=0, 1, .... 
 \end{equation}

By Lemma \ref{trace-twining-2}, there is a diagram
\begin{equation}\label{diag-0-7-1}
\xymatrix{
\mathrm{M}_{n_0^{(A)}}(\mathrm{C}(X)) \ar[r]^-{\phi_{1, i_1}^{(A)}} \ar[dr]^-{\phi_{1, i_1}^{(A, B)}} & \mathrm{M}_{m^{(A)}_{i_1}}(\mathrm{C}(X^{d^{(A)}_{i_1}})) \ar[r]^-{\phi_{i_1, i_2}^{(A)}}  & \mathrm{M}_{m^{(A)}_{i_2}}(\mathrm{C}(X^{d^{(A)}_{i_2}})) \ar[r] \ar[dr]^-{\phi_{i_2, i_3}^{(A, B)}} & \cdots \ar[r] & A \\
\mathrm{M}_{n_0^{(B)}}(\mathrm{C}(X)) \ar[r]^-{\phi_{1, i_1}^{(B)}}  & \mathrm{M}_{m^{(B)}_{i_1}}(\mathrm{C}(X^{d^{(B)}_{i_1}})) \ar[r]^-{\phi_{i_1, i_2}^{(B)}} \ar[ur]^-{\phi_{i_1, i_2}^{(B, A)}}  & \mathrm{M}_{m^{(F)}_{i_2}}(\mathrm{C}(X^{d^{(B)}_{i_2}})) \ar[r] & \cdots \ar[r] & B
}
\end{equation}
such that
$$\abs{\tau(\phi^{(B, A)}_{i_{s+1}, i_{s+2}} \circ \phi^{(A, B)}_{i_s, i_{s+1}}(h) - \phi^{(A)}_{i_{s+1}, i_{s+2}} \circ \phi^{(A)}_{i_s, i_{s+1}}(h))} < \delta_{i_s}$$
for any $s=0, 2, ...$, any $h\in \mathrm{M}_{m^{(A)}_{i_s}}(\mathrm{C}(X^{d^{(A)}_{i_s}}))$ with $\norm{h}\leq 1$, and any $\tau\in\mathrm{T}(\mathrm{M}_{m^{(A)}_{i_{s+2}}}(\mathrm{C}(X^{d^{(A)}_{i_{s+2}}})))$, 
and, furthermore,
$$\abs{\tau(\phi^{(A, B)}_{i_{s+1}, i_{s+2}} \circ \phi^{(B, A)}_{i_s, i_{s+1}}(h) - \phi^{(B)}_{i_{s+1}, i_{s+2}} \circ \phi^{(B)}_{i_s, i_{s+1}}(h))} < \delta_{i_s}$$
for any $s=1, 3, ...$, any $h\in \mathrm{M}_{m^{(B)}_{i_s}}(\mathrm{C}(X^{d^{(B)}_{i_s}}))$ with $\norm{h}\leq 1$, and any $\tau\in\mathrm{T}(\mathrm{M}_{m^{(B)}_{i_{s+2}}}(\mathrm{C}(X^{d^{(B)}_{i_{s+2}}})))$.

Moreover, for each $s = 0, 2, ... $,
$$\phi^{(B, A)}_{i_{s+1}, i_{s+2}} \circ \phi^{(A, B)}_{i_s, i_{s+1}} =\mathrm{diag}\{P_s, R'_s, \Theta'_s \} $$ 
and
$$\phi^{(A)}_{i_{s}, i_{s+2}} = \phi^{(A)}_{i_{s+1}, i_{s+2}} \circ \phi^{(A)}_{i_s, i_{s+1}} = \mathrm{diag}\{P_s, R''_s,  \Theta_s''\}, $$
where $P_s$ is a coordinate projection, and $\Theta'_s$ and $\Theta''_s$ are point evaluations with
$$ \mathrm{rank}(\Theta_s') = \mathrm{rank}(\Theta_s'')\quad\mathrm{and}\quad  \frac{ \mathrm{rank}(R'_s)} {\mathrm{rank}(\Theta'_s)}  = \frac{ \mathrm{rank}(R''_s)} {\mathrm{rank}(\Theta''_s)} < \delta_{i_s},$$
and, furthermore, 
for each $s = 1, 3, ...$,
$$\phi^{(A, B)}_{i_{s+1}, i_{s+2}} \circ \phi^{(B, A)}_{i_s, i_{s+1}} =\mathrm{diag}\{P_s, R'_s, \Theta'_s \} $$ 
and
$$\phi^{(B)}_{i_{s}, i_{s+2}} = \phi^{(B)}_{i_{s+1}, i_{s+2}} \circ \phi^{(A)}_{i_s, i_{s+1}} = \mathrm{diag}\{P_s, R''_s,  \Theta_s''\}, $$
where $P_s$ is a coordinate projection, and $\Theta'_s$ and $\Theta''_s$ are point evaluations with
$$ \mathrm{rank}(\Theta_s') = \mathrm{rank}(\Theta_s'')\quad\mathrm{and}\quad  \frac{ \mathrm{rank}(R'_s)} {\mathrm{rank}(\Theta'_s)}  = \frac{ \mathrm{rank}(R''_s)} {\mathrm{rank}(\Theta''_s)} < \delta_{i_s}.$$

Also note that, by \eqref{bound-tr-A} and \eqref{bound-tr-B}, 
$$ \tau(\phi^{(A)}_{i_{s+1}, i_{s+2}} \circ \phi^{(A)}_{i_s, i_{s+1}}(h)) > \Delta_A(h)/2,\quad h\in\mathcal H^{(A)}_{i_s},\  \tau\in\mathrm{T}(\mathrm{M}_{m_{i_{s+2}}^{(A)}}(\mathrm{C}(X^{d_{i_{s+2}}^{(A)}}))),\ s=0, 2, ... $$
and
$$ \tau(\phi^{(B)}_{i_{s+1}, i_{s+2}} \circ \phi^{(B)}_{i_s, i_{s+1}}(h)) > \Delta_B(h)/2,\quad h\in\mathcal H^{(B)}_{i_s},\  \tau\in\mathrm{T}(\mathrm{M}_{m_{i_{s+2}}^{(B)}}(\mathrm{C}(X^{d_{i_{s+2}}^{(B)}}))),\ s=1, 3, .... $$

By Theorem \ref{unique}, there are unitaries $u_s$, $s=2, 3, ...$, such that
\begin{equation}
 \norm{u_s^*\mathrm{diag}\{P_s, \Theta_s' \}u_s - \mathrm{diag}\{P_s, \Theta_s'' \}} < \frac{\eps_{i_s}}{2} \quad\textrm{on $\mathcal F^{(A)}_{i_s}$}.
\end{equation}
Consider the maps
$$\mathrm{diag}\{R_s', \Theta_s'\} \quad\mathrm{and}\quad \mathrm{diag}\{R_s'', \Theta_s''\}.$$
By Theorem \ref{stable-uniq}, there is a unitary $w_s$ such that
\begin{equation}
\norm{ w_s^*\mathrm{diag}\{R'_s, \Theta_s''\}w_s - \mathrm{diag}\{R''_s, \Theta_s''\} } < \frac{\eps_{i_s}}{2} \quad\textrm{on $\mathcal F^{(A)}_{i_s}$}.
\end{equation}
Setting $v_s = u_sw_s$, and regarding $u_s$ and $w_s$ as unitaries in $\mathrm{M}_{m_{i_s}^{(A)}}(\mathrm{C}(X^{d_{i_s}^{(A)}}))$, we have
$$\norm{v_s^*\mathrm{diag}\{P_s, R_s', \Theta'_s \}v_s- \mathrm{diag}\{P_s, R''_s, \Theta_s'' \}} < \eps_{i_s} \quad\textrm{on $\mathcal F^{(A)}_{i_s}$}.$$ That is,
$$\norm{v_s^*(\phi^{(B, A)}_{i_{s+1}, i_{s+2}} \circ \phi^{(A, B)}_{i_s, i_{s+1}})v_s - \phi^{(A)}_{i_{s+1}, i_{s+2}} \circ \phi^{(A)}_{i_s, i_{s+1}}} < \eps_{i_s} \quad\textrm{on $\mathcal F^{(A)}_{i_s}$}.$$

A similar argument shows that for $s=1, 3, ...$, there are unitaries $v_s$ such that
$$\norm{v_s^*(\phi^{(A, B)}_{i_{s+1}, i_{s+2}} \circ \phi^{(B, A)}_{i_s, i_{s+1}})v_s - \phi^{(B)}_{i_{s+1}, i_{s+2}} \circ \phi^{(B)}_{i_s, i_{s+1}}} < \eps_{i_s} \quad\textrm{on $\mathcal F^{(B)}_{i_s}$}.$$
Therefore, in the diagram
\begin{equation}\label{diag-n}
\xymatrix{
\mathrm{M}_{n_0^{(A)}}(\mathrm{C}(X)) \ar[r]^-{\phi_{1, i_1}^{(A)}} \ar[dr]^-{\phi_{1, i_1}^{(A, B)}} & \mathrm{M}_{m^{(A)}_{i_1}}(\mathrm{C}(X^{d^{(A)}_{i_1}})) \ar[r]^-{\mathrm{ad}(v_2^{(A)})\circ\phi_{i_1, i_2}^{(A)}}  & \mathrm{M}_{m^{(A)}_{i_2}}(\mathrm{C}(X^{d^{(A)}_{i_2}})) \ar[r] \ar[dr]^-{\phi_{i_2, i_3}^{(A, B)}} & \cdots \ar[r] & A \\
\mathrm{M}_{n_0^{(B)}}(\mathrm{C}(X)) \ar[r]_-{\phi_{1, i_1}^{(B)}}  & \mathrm{M}_{m^{(B)}_{i_1}}(\mathrm{C}(X^{d^{(B)}_{i_1}})) \ar[r]_-{\phi_{i_1, i_2}^{(B)}} \ar[ur]^-{\phi_{i_1, i_2}^{(B, A)}}  & \mathrm{M}_{m^{(B)}_{i_2}}(\mathrm{C}(X^{d^{(B)}_{i_2}})) \ar[r]_-{ \mathrm{ad}(v_3^{(B)})\circ \phi_{i_1, i_2}^{(B)}} & \cdots \ar[r] & B,
}
\end{equation}
the $s$th triangle is approximately commutative to within $(\mathcal F_{i_s}^{(A)}, \eps_{i_s})$ or $(\mathcal F_{i_s}^{(B)}, \eps_{i_s})$. By the approximate intertwining argument (\cite{Ell-AT-RR0}), we have $$A \cong B,$$ as desired.
\end{proof}

\begin{rem}
In the case of non-zero radius of comparison, the trace simplex is always the Poulsen simplex (Theorem \ref{trace-VA}).  
\end{rem}

\begin{cor}\label{diff-prod}
Let $X$ be a K-contractible solid metrizable compact space which is finite dimensional. Let 
$$A:=A(X^p, (n^{(A)}_i), (k^{(A)}_i), E^{(A)}) \quad\textrm{and}\quad B:=B(X^q, (n^{(B)}_i), (k^{(B)}_i), F^{(B)})$$
be Villadsen algebras (with coordinate projections of arbitrary multiplicity).
Then $A \cong B$
if, and only if,  
$$\Kzero(A) \cong \Kzero(B),\quad \mathrm{T}(A) \cong \mathrm{T}(B) \quad \textrm{and}\quad \mathrm{rc}(A) = \mathrm{rc}(B).$$
Moreover, if $\mathrm{rc}(A) \neq 0$ (or $\mathrm{rc}(B) \neq 0$), then $\mathrm{T}(A)$ (or $\mathrm{T}(B)$) is redundant in the invariant; that is, $A \cong B$ 
if, and only if,  $$\Kzero(A) \cong \Kzero(B) \quad \textrm{and}\quad \mathrm{rc}(A) = \mathrm{rc}(B).$$
\end{cor}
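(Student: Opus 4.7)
The ``only if'' direction is standard, since $\Kzero$, $\mathrm{T}(\cdot)$, and $\mathrm{rc}(\cdot)$ are invariants of isomorphism. For the ``if'' direction, the plan is to adapt the proof of Theorem \ref{n-theorm} to the case of two possibly distinct seed spaces $X^p$ and $X^q$. The first point is that $X^p$ and $X^q$ themselves satisfy the hypotheses of Theorem \ref{n-theorm}: they are finite-dimensional and solid (they contain Euclidean balls of dimension $p\dim(X)$ and $q\dim(X)$), and they are K-contractible by the K\"unneth theorem ($\Kzero(\mathrm{C}(X^p)) = \Kzero(\mathrm{C}(X))^{\otimes p} = \Int$ and $\Kone(\mathrm{C}(X^p)) = 0$, and similarly for $X^q$). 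In particular, Theorems \ref{unique} and \ref{stable-uniq} apply with either of these spaces in the role of the seed space. The ``moreover'' clause will then follow automatically, since a Villadsen algebra with nonzero radius of comparison has Poulsen trace simplex by Theorem \ref{trace-VA}, so $\mathrm{T}(A)\cong\mathrm{T}(B)$ is automatic in that case.

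The main technical task is to establish the analogue of Lemma \ref{trace-twining-2} in this mixed-seed setting, producing an approximately commutative (at the trace level) intertwining diagram between the two chains whose base spaces are $X^{p\,c_1^{(A)}\cdots c_{i-1}^{(A)}}$ and $X^{q\,c_1^{(B)}\cdots c_{j-1}^{(B)}}$. The intertwining maps $\tilde\phi^{(A,B)}_{i'_1,i_1}$ and $\tilde\phi^{(B,A)}_{i'_2,i_2}$ are still built from coordinate projections (selecting $d_{i_1'}^{(A)}$ out of the $d_{i_1}^{(B)}$ coordinates, which requires only $d_{i_1'}^{(A)}\le d_{i_1}^{(B)}$, arranged for large $i_1$) together with point evaluations. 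The two hypotheses of Lemma \ref{trace-twining-2} translate as follows: the supernatural-number condition \eqref{super-n} is exactly $\Kzero(A) \cong \Kzero(B)$ (via the identifications $\Kzero(A) \cong \Int[\tfrac{1}{n_0^{(A)}}, \tfrac{1}{n_1^{(A)}+k_1^{(A)}},\ldots]$ and likewise for $B$), while the real-number condition \eqref{same-rc}, now carrying extra factors $p$ and $q$ to reflect $\dim(X^p)=p\dim(X)$ and $\dim(X^q)=q\dim(X)$, reads
\[
\frac{p}{n_0^{(A)}} \prod_{i=1}^\infty \frac{c_i^{(A)}}{n_i^{(A)}+k_i^{(A)}} \;=\; \frac{q}{n_0^{(B)}} \prod_{i=1}^\infty \frac{c_i^{(B)}}{n_i^{(B)}+k_i^{(B)}} \;\neq\; 0,
\]
and follows from $\mathrm{rc}(A)=\mathrm{rc}(B)\neq 0$ together with Theorem \ref{rcA} applied to the solid seeds $X^p$ and $X^q$.

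With the mixed-seed intertwining in hand, the proof of Theorem \ref{n-theorm} goes through essentially verbatim: use Theorem \ref{unique} to straighten each triangle into a norm-approximate commutativity on its prescribed finite set (exploiting the shared coordinate-projection portion $P_s$), use Theorem \ref{stable-uniq} to absorb the residual point-evaluation blocks $R'_s,R''_s$ (controlled by the bound $\mathrm{rank}(R'_s)/\mathrm{rank}(\Theta'_s)<\delta_{i_s}$), and then invoke Elliott's approximate-intertwining argument (\cite{Ell-AT-RR0}) to conclude $A\cong B$. The main obstacle will be the arithmetic bookkeeping in recasting Lemma \ref{trace-twining-2}: the integer $l_{i_1',i_1}$ (now approximating $q\,c_1^{(B)}\cdots c_{i_1-1}^{(B)}/(p\,c_1^{(A)}\cdots c_{i_1'-1}^{(A)})$) and the divisibility and rank estimates therein must be recomputed with the extra factors of $p$ and $q$, with the common value $\gamma$ now being $2\mathrm{rc}(A)/\dim(X)$; these are mechanical modifications of the same estimates already carried out.
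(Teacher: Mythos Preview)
Your approach is plausible but takes a genuinely different route from the paper's. You propose to rework Lemma~\ref{trace-twining-2} for mixed seeds $X^p$ and $X^q$, building the cross maps $\tilde\phi^{(A,B)}$, $\tilde\phi^{(B,A)}$ via $X$-level coordinate projections and tracking the extra factors $p,q$ through the arithmetic (in particular, replacing the condition $\gamma^{(A)}=\gamma^{(B)}$ by $p\gamma^{(A)}=q\gamma^{(B)}$, which is what $\mathrm{rc}(A)=\mathrm{rc}(B)$ gives). This should go through, but the step you flag as ``mechanical'' hides a real bookkeeping issue: the connecting maps of the $A$-chain use $(X^p)$-block coordinate projections while those of the $B$-chain use $(X^q)$-block projections, and you must verify that the composite $\phi^{(B,A)}\circ\phi^{(A,B)}$ can be arranged to share a large common coordinate-projection part $P_s$ with $\phi^{(A)}_{i_s,i_{s+2}}$ at the $X$-level. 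This is doable (one has freedom in choosing the $\pi_j$ in $\tilde\phi^{(A,B)}$ and $\tilde\phi^{(B,A)}$), but it is not quite a one-line substitution into the existing proof.

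The paper instead avoids all of this by a reduction trick. It tensors $A$ by $\mathrm{M}_{pq}$ and prepends a single new stage $\mathrm{M}_{qn_0^{(A)}}(\mathrm{C}(X))\to \mathrm{M}_{pqn_0^{(A)}}(\mathrm{C}(X^p))$ given by the $p$ coordinate projections $X^p\to X$ (and likewise for $B$ with $p$ and $q$ swapped). After collapsing the first two stages, $\mathrm{M}_{pq}(A)$ and $\mathrm{M}_{pq}(B)$ are both genuine Villadsen algebras with the \emph{same} seed $X$, so Theorem~\ref{n-theorm} applies directly to give $\mathrm{M}_{pq}(A)\cong \mathrm{M}_{pq}(B)$. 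Finally, stable rank one (from \cite{EHT-sr1}) gives cancellation of projections, and since $[1_A]_0=[1_B]_0$ in $\Kzero$, one recovers $A\cong B$. This sidesteps the mixed-seed intertwining entirely; your approach is more direct but buys that directness at the cost of redoing the lemma, while the paper's approach reuses Theorem~\ref{n-theorm} wholesale at the price of an extra cancellation argument.
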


\begin{proof}
Consider the inductive limit construction
\begin{displaymath}
\xymatrix{
\mathrm{M}_{n_0^{(A)}}(\mathrm{C}(X^p)) \ar[r] & \mathrm{M}_{m_1^{(A)}}(\mathrm{C}(X^{pd^{(A)}_1})) \ar[r]  & \mathrm{M}_{m_2^{(A)}}(\mathrm{C}(X^{pd^{(A)}_2})) \ar[r] & \cdots \ar[r] & A.
}
\end{displaymath}
Tensor it with $\mathrm{M}_{pq}(\Comp)$ and add a new first map, which is induced by coordinate projections with multiplicity one (but no point evaluation yet), to obtain the new construction
\begin{displaymath}
\xymatrix{
\mathrm{M}_{qn_{0}^{(A)}}(\mathrm{C}(X)) \ar[r] & \mathrm{M}_{pq}(\mathrm{M}_{n_0^{(A)}}(\mathrm{C}(X^p))) \ar[r] & \mathrm{M}_{pq}(\mathrm{M}_{m_1^{(A)}}(\mathrm{C}(X^{pd^{(A)}_1}))) \ar[r] & \cdots \ar[r] & \mathrm{M}_{pq}(A).
}
\end{displaymath}
Similarly, also consider $B$, and consider the new inductive limit construction
\begin{displaymath}
\xymatrix{
\mathrm{M}_{pn_{0}^{(B)}}(\mathrm{C}(X)) \ar[r] & \mathrm{M}_{pq}(\mathrm{M}_{n_0^{(B)}}(\mathrm{C}(X^q)) \ar[r] & \mathrm{M}_{pq}(\mathrm{M}_{m_1^{(B)}}(\mathrm{C}(X^{qd^{(B)}_1}))) \ar[r] & \cdots \ar[r] & \mathrm{M}_{pq}(B).
}
\end{displaymath}
Then collapse the first two maps so that there are point evaluations in all connecting maps of the new sequences.

Since $\mathrm{rc}(A) = \mathrm{rc}(B)$, we have $\mathrm{rc}(\mathrm{M}_{pq}(A)) = \mathrm{rc}(\mathrm{M}_{pq}(B))$, and 
by Theorem \ref{n-theorm}, $\mathrm{M}_{pq}(A) \cong \mathrm{M}_{pq}(B)$; denote this algebra by $C$. Note that $[e_A]_0 = [e_B]_0$ in $\Kzero(C)$, where $e_A$ and $e_B$ are the images of $1_A$ and $1_B$ in the upper left corner of the matrix algebras respectively. Since $A$ and $B$ have stable rank one (\cite{EHT-sr1}), $C$ has stable rank one, and hence cancellation of projections. This implies that $e_A$ is Murray-von Neumann equivalent to $e_B$ inside $C$, and therefore $A\cong B$, as asserted.
\end{proof}

\begin{rem}
Let $A$ and $B$ be two Villadsen algebras with seed spaces $X$ and $Y$ respectively. Is $X^\infty \cong Y^\infty$ sufficient for our classification to apply? Or, is it possible that the seed space $X$ is completely irrelevant (when $\mathrm{rc \neq 0}$)?
\end{rem}

\bibliographystyle{plain}
%\bibliography{operator_algebras}

\end{document}